\definecolor{green}{RGB}{0,127,0}
\definecolor{red}{RGB}{191,0,0}
\theoremstyle{plain}
\newtheorem{lemma}{Lemma}[section]
\newtheorem{theorem}[lemma]{Theorem}
\newtheorem{proposition}[lemma]{Proposition}
\newtheorem{question}[lemma]{Question}
\newtheorem{definition}[lemma]{Definition}
\newtheorem{definition-lemma}[lemma]{Definition-Lemma}
\newtheorem{remark}[lemma]{Remark}
\newtheorem{example}{Example}
\DeclareMathOperator{\Part}{\mathcal{P}}
\def\P{\mathcal{P}}
\newcommand{\G}{{G}}
\newcommand{\RR}{\mathbb{R}}
\newcommand{\C}{\mathbb{C}}
\newcommand{\N}{\mathbb{N}}
\newcommand{\F}{\mathcal{F}(\HH)}
\newcommand{\p}{\beta}
\newcommand{\n}{{\gamma}}
\newcommand{\Ci}{\sigma}
\newcommand{\x}{\mathbf{x}_{\overline n} \otimes \mathbf{x}_n}
\newcommand{\y}{\mathbf{y}_{\overline n} \otimes \mathbf{y}_n}
\newcommand{\q}{q_+}
\newcommand{\s}{q_-}
\newcommand{\Sym}[1]{\mathfrak{S}_{#1}}
\newcommand{\RRR}{\mathcal{R}}
\newcommand{\tyldapi}{\tilde{\pi}}
\newcommand{\CCC}{\mathcal{C}}
\newcommand{\Z}{\mathbb{Z}}
\newcommand{\J}{\mathcal{J}}
\def\SLNB{\mathit{c_{-}}}
\def\Pair{\text{\normalfont Pair}}
\def\Cycle{\text{\normalfont Cyc}}
\def\Scycle{\text{\normalfont SemiCyc}}
\def\Lcycle{\mathit{ l_{c}}}
\def\LScycle{{\mathit l_{sc}}}
\def\ends{\mathsf{ l}}
\def\start{\mathsf{ l}}
\def\maks{\mathsf{ r }}
\def\m{\kappa_{\q,\s}}
\def\qMP{{\mu}}
\newcommand{\xx}{\bm{x}}
\newcommand{\yy}{\bm{y}}
\newcommand{\cyc}{{c}}
\def\R{{\mathbb R}}
\def\H{{\bf \mathcal K}_{ n}}
\def\HH{{\bf \mathcal K}}
\def\B{\r_{\q,\s}}
\def\state{\varphi}
\newcommand\reallywidehat[1]{%
\savestack{\tmpbox}{\stretchto{%
  \scaleto{%
    \scalerel*[\widthof{\ensuremath{#1}}]{\kern-.6pt\bigwedge\kern-.6pt}%
    {\rule[-\textheight/2]{1ex}{\textheight}}
  }{\textheight}%
}{0.5ex}}%
\stackon[1pt]{#1}{\tmpbox}%
}
\DeclareMathOperator{\Dyck}{Dyck}
\def\a{\alpha}
\def\r{\mathsf{b}}
\DeclareMathOperator{\Tr}{Tr}
\DeclareMathOperator{\sgn}{sgn}
\DeclareMathOperator{\id}{id}
\DeclareMathOperator{\NC}{\mathcal{NC}}
\DeclareMathOperator{\Span}{Span}
\newcommand\MatchingMeanders[3]{%
  \begin{tikzpicture}[scale=0.7]
        \foreach \x in {1,...,#1}{
       \draw[circle,fill] (\x,0)circle[radius=1mm]node[below]{$ \x$};
    }
    \foreach \x/\y in {#2} {
       \pgfmathsetmacro{\Radius}{\y/2-\x/2}
       \draw(\x,0) arc[radius=\Radius, start angle=180, end angle=0];
;
      \node at (0,2.3) { $ \pi$}; 
        \node at (0,-2) { $ \hat \pi$}; 
    }
    \foreach \x/\y in {#3} {
       \pgfmathsetmacro{\Radius}{\y/2-\x/2}
       \draw(\x,0) arc[radius=\Radius, start angle=-180, end angle=0];
        ;}
         \foreach \x in  {#1,...,1}{
       \draw[circle,fill] (-\x,0)circle[radius=1mm]node[below]{$ \overline{\x}$};
    }
  \end{tikzpicture}
}
\newcommand\MatchingMeandersab[3]{%
  \begin{tikzpicture}[scale=0.7]
        \foreach \x in {1,...,#1}{
       \draw[circle,fill] (\x,0)circle[radius=1mm]node[below]{};
    }
    \foreach \x/\y in {#2} {
       \pgfmathsetmacro{\Radius}{\y/2-\x/2}
       \draw(\x,0) arc[radius=\Radius, start angle=180, end angle=0];
;
    }
    \foreach \x/\y in {#3} {
       \pgfmathsetmacro{\Radius}{\y/2-\x/2}
       \draw(\x,0) arc[radius=\Radius, start angle=-180, end angle=0];
        ;}
         \foreach \x in {-#1,...,-1}{
       \draw[circle,fill] (\x,0)circle[radius=1mm]node[below]{};
    }
    \node at (-4,0.4) { $\overline 4$}; 
     \node at (-3,0.4) { $ \overline 3$}; 
     \node at (-2,0.4) { $\overline  2$};
     \node at (-1,0.4) { $\overline  1$};
       \node at (4,0.4) { $ 4$}; 
     \node at (3,0.4) { $  3$}; 
     \node at (2,0.4) { $  2$};
     \node at (1,0.4) { $  1$};
          \node at (2.5,-0.75) { $\downarrow$};
\node at (2.5,-1.4) { $ C$}; 
   \node at (-2.5,-0.75) { $\downarrow$};
\node at (-2.5,-1.4) { $ \overline C$};
\node at (0,3.2) { $ (2,4)\sim (1,3)$ by $C=(2,3)$};
  \end{tikzpicture}
}
\newcommand\MatchingMeandersabc[3]{%
  \begin{tikzpicture}[scale=0.7]
        \foreach \x in {1,...,#1}{
       \draw[circle,fill] (\x,0)circle[radius=1mm]node[below]{};
    }
    \foreach \x/\y in {#2} {
       \pgfmathsetmacro{\Radius}{\y/2-\x/2}
       \draw(\x,0) arc[radius=\Radius, start angle=180, end angle=0];
;
    }
    \foreach \x/\y in {#3} {
       \pgfmathsetmacro{\Radius}{\y/2-\x/2}
       \draw(\x,0) arc[radius=\Radius, start angle=-180, end angle=0];
        ;}
         \foreach \x in {-#1,...,-1}{
       \draw[circle,fill] (\x,0)circle[radius=1mm]node[below]{};
    }
    \node at (-4,0.4) { $\overline 4$}; 
     \node at (-3,0.4) { $ \overline 3$}; 
     \node at (-2,0.4) { $\overline 2$};
     \node at (-1,0.4) { $  \overline 1$};
       \node at (4,0.4) { $ 4$}; 
     \node at (3,0.4) { $  3$}; 
     \node at (2,0.4) { $ 2$};
     \node at (1,0.4) { $ 1$};
     \node at (2.5,-0.75) { $\downarrow$};
\node at (2.5,-1.4) { $ C$}; 
   \node at (-2.5,-0.75) { $\downarrow$};
\node at (-2.5,-1.4) { $ \overline C$};
\node at (0,3.2) { $ (\overline 3,2)\sim (\overline 2, 3)$ by $C=(2,3)$};
  \end{tikzpicture}
}
\newcommand\MatchingProof[3]{%
  \begin{tikzpicture}[scale=0.8]
        \foreach \x in {1,...,#1}{
       \draw[circle,fill] (\x,0)circle[radius=1mm]node[below]{};
    }
    \foreach \x/\y in {#2} {
       \pgfmathsetmacro{\Radius}{\y/2-\x/2}
       \draw(\x,0) arc[radius=\Radius, start angle=180, end angle=0];
;
    }
    \foreach \x/\y in {#3} {
       \pgfmathsetmacro{\Radius}{\y/2-\x/2}
       \draw(\x,0) arc[radius=\Radius, start angle=-180, end angle=0];
        ;}
         \foreach \x in {-#1,...,-1}{
       \draw[circle,fill] (\x,0)circle[radius=1mm]node[below]{};
    }
     \node at (-2,0.4) { $ \scriptstyle  \start(\Ci^-_p)$};
       \node at (2,0.4) { $ \scriptstyle  \ends(\Ci^+_p)$};
       \node at (-1,0.4) { $ \scriptstyle  \maks(\Ci^-_p)$};
       \node at (1,0.4) { $ \scriptstyle  \maks(\Ci^+_p)$};
       \node at (2,-1.72) { $\downarrow$};
    \node at (2,-2.3) { $  \scriptstyle  \Ci^+_p$};
\node at (-2,-1.72) { $\downarrow$};
\node at (-2,-2.3) { $ \scriptstyle  \Ci^-_p$};  
\node at (0,0) { $\scriptstyle \dots$}; 
\node at (0,1.3) { $\pi$}; 
   \node at (3.8,0) { $\longmapsto$};
  \end{tikzpicture}
}
\newcommand\MatchingProofd[3]{%
  \begin{tikzpicture}[scale=0.8]
        \foreach \x in {1,...,#1}{
       \draw[circle,fill] (\x,0)circle[radius=1mm]node[below]{};
    }
    \foreach \x/\y in {#2} {
       \pgfmathsetmacro{\Radius}{\y/2-\x/2}
       \draw(\x,0) arc[radius=\Radius, start angle=180, end angle=0];
;
    }
    \foreach \x/\y in {#3} {
       \pgfmathsetmacro{\Radius}{\y/2-\x/2}
       \draw(\x,0) arc[radius=\Radius, start angle=-180, end angle=0];
        ;}
         \foreach \x in {-#1,...,-1}{
       \draw[circle,fill] (\x,0)circle[radius=1mm]node[below]{};
    }
     \node at (-2,0.4) { $ \scriptstyle  \start(\Ci^-_p)$};
       \node at (2,0.4) { $ \scriptstyle  \ends(\Ci^+_p)$};
       \node at (-1,0.4) { $ \scriptstyle   \maks(\Ci^+_p)$};
       \node at (1,0.4) { $ \scriptstyle  \maks(\Ci^-_p)$};
       \node at (2,-1.72) { $\downarrow$};
    \node at (2,-2.3) { $ \scriptstyle  \Ci^+_p$};
\node at (-2,-1.72) { $\downarrow$};
\node at (-2,-2.3) { $\scriptstyle \Ci^-_p$};  
\node at (0,0) { $\scriptstyle \dots$}; 
   \node at (3.8,0) { $\longmapsto$};
\node at (0,1.3) { $\pi$}; 
  \end{tikzpicture}
}
  \newcommand\MatchingProoff[3]{%
     \begin{tikzpicture}[scale=0.8]
        \foreach \x in {1,...,#1}{
       \draw[circle,fill] (\x,0)circle[radius=1mm]node[below]{};
    }
    \foreach \x/\y in {#2} {
       \pgfmathsetmacro{\Radius}{\y/2-\x/2}
       \draw(\x,0) arc[radius=\Radius, start angle=180, end angle=0];
;
    }
    \foreach \x/\y in {#3} {
       \pgfmathsetmacro{\Radius}{\y/2-\x/2}
       \draw(\x,0) arc[radius=\Radius, start angle=-180, end angle=0];
        ;}
         \foreach \x in {-#1,...,-1}{
       \draw[circle,fill] (\x,0)circle[radius=1mm]node[below]{};
    }
      \node at (-2,0.4) { $ \scriptstyle  \start(\Ci^-_p)$};
       \node at (2,0.4) { $ \scriptstyle  \ends(\Ci^+_p)$};
           \node at (-1,0.4) { $ \scriptstyle  \maks(\Ci^-_p)$};
       \node at (1,0.4) { $ \scriptstyle  \maks(\Ci^+_p)$};
          \node at (-4.05,0.4) { $ \scriptstyle  \overline k$};
       \node at (4.05,0.4) { $ \scriptstyle  k$};
         \node at (2.5,-1.72) { $\downarrow$};
    \node at (2.5,-2.3) { $\scriptstyle  (\Ci^+_p,k)$};
     \node at (-2.5,-1.72) { $\downarrow$};
\node at (-2.5,-2.3) { $\scriptstyle  (\overline k ,\Ci^-_p)$};  
\node at (0,0) { $\scriptstyle \dots$}; 
\node at (0,1.3) { $\tyldapi$}; 
     \node at (-4.5,1.5) { $(a)$};
  \end{tikzpicture}
}
\newcommand\MatchingProoffd[3]{%
     \begin{tikzpicture}[scale=0.8]
        \foreach \x in {1,...,#1}{
       \draw[circle,fill] (\x,0)circle[radius=1mm]node[below]{};
    }
    \foreach \x/\y in {#2} {
       \pgfmathsetmacro{\Radius}{\y/2-\x/2}
       \draw(\x,0) arc[radius=\Radius, start angle=180, end angle=0];
;
    }
    \foreach \x/\y in {#3} {
       \pgfmathsetmacro{\Radius}{\y/2-\x/2}
       \draw(\x,0) arc[radius=\Radius, start angle=-180, end angle=0];
        ;}
         \foreach \x in {-#1,...,-1}{
       \draw[circle,fill] (\x,0)circle[radius=1mm]node[below]{};
    }
      \node at (-2,0.4) { $ \scriptstyle  \start(\Ci^-_p)$};
       \node at (2,0.4) { $ \scriptstyle  \ends(\Ci^+_p)$};
           \node at (-1,0.4) { $ \scriptstyle  \maks(\Ci^+_p)$};
       \node at (1,0.4) { $ \scriptstyle  \maks(\Ci^-_p)$};
          \node at (-4.05,0.4) { $ \scriptstyle  \overline k$};
       \node at (4.05,0.4) { $ \scriptstyle  k$};
         \node at (2.5,-1.72) { $\downarrow$};
    \node at (2.5,-2.3) { $\scriptstyle (\Ci^-_p,k)$};
     \node at (-2.5,-1.72) { $\downarrow$};
\node at (-2.5,-2.3) { $ \scriptstyle (\overline k,\Ci^+_p)$};  
\node at (0,0) { $\scriptstyle \dots$}; 
   \node at (0,1.3) { $\tyldapi$}; 
      \node at (-4.5,3) { $(b)$}; 
  \end{tikzpicture}
}
\newcommand\MatchingProoffdd[3]{%
     \begin{tikzpicture}[scale=0.8]
        \foreach \x in {1,...,#1}{
       \draw[circle,fill] (\x,0)circle[radius=1mm]node[below]{};
    }
    \foreach \x/\y in {#2} {
       \pgfmathsetmacro{\Radius}{\y/2-\x/2}
       \draw(\x,0) arc[radius=\Radius, start angle=180, end angle=0];
;
    }
    \foreach \x/\y in {#3} {
       \pgfmathsetmacro{\Radius}{\y/2-\x/2}
       \draw(\x,0) arc[radius=\Radius, start angle=-180, end angle=0];
        ;}
         \foreach \x in {-#1,...,-1}{
       \draw[circle,fill] (\x,0)circle[radius=1mm]node[below]{};
    }
      \node at (-2,0.4) { $ \scriptstyle  \start(\Ci^-_p)$};
       \node at (2,0.4) { $ \scriptstyle  \ends(\Ci^+_p)$};
           \node at (-1,0.4) { $ \scriptstyle  \maks(\Ci^+_p)$};
       \node at (1,0.4) { $ \scriptstyle  \maks(\Ci^-_p)$};
          \node at (-4.05,0.4) { $ \scriptstyle  \overline k$};
       \node at (4.05,0.4) { $ \scriptstyle  k$};
     \node at (0,-1.72) { $\downarrow$};
\node at (0,-2.3) { $\scriptstyle (\overline k,\Ci^-_p)^-$};  
\node at (0,0) { $\scriptstyle \dots$}; 
   \node at (0,1.3) { $\tyldapi$}; 
      \node at (-4.5,3) { $(b)$}; 
  \end{tikzpicture}
}
\newcommand\MatchingProofp[3]{%
  \begin{tikzpicture}[scale=0.4]
        \foreach \x in {1,...,#1}{
       \draw[circle,fill] (\x,0)circle[radius=1mm]node[below]{};
    }
    \foreach \x/\y in {#2} {
       \pgfmathsetmacro{\Radius}{\y/2-\x/2}
       \draw(\x,0) arc[radius=\Radius, start angle=180, end angle=0];
;
    }
    \foreach \x/\y in {#3} {
       \pgfmathsetmacro{\Radius}{\y/2-\x/2}
       \draw(\x,0) arc[radius=\Radius, start angle=-180, end angle=0];
        ;}
         \foreach \x in {-#1,...,-1}{
       \draw[circle,fill] (\x,0)circle[radius=1mm]node[below]{};
    }
       \node at (2,-2) { $\downarrow$};
    \node at (2,-3) { $ \scriptstyle  \Ci^+_i$};
    \node at (5,-2) { $\downarrow$};
       \node at (5,-3) { $\scriptstyle   \Ci^+_p$};
\node at (-2,-2) { $\downarrow$};
\node at (-2,-3) { $\scriptstyle \Ci^-_i$};  
\node at (-5,-2) { $\downarrow$};
\node at (-5,-3) { $\scriptstyle \Ci^-_p$};  
\node at (0,0) { $\scriptscriptstyle \dots$}; 
   \node at (7,1) { $\longmapsto$}; 
  \end{tikzpicture}
}
  \newcommand\MatchingProoffpp[3]{%
     \begin{tikzpicture}[scale=0.4]
        \foreach \x in {1,...,#1}{
       \draw[circle,fill] (\x,0)circle[radius=1mm]node[below]{};
    }
    \foreach \x/\y in {#2} {
       \pgfmathsetmacro{\Radius}{\y/2-\x/2}
       \draw(\x,0) arc[radius=\Radius, start angle=180, end angle=0];
;
    }
    \foreach \x/\y in {#3} {
       \pgfmathsetmacro{\Radius}{\y/2-\x/2}
       \draw(\x,0) arc[radius=\Radius, start angle=-180, end angle=0];
        ;}
         \foreach \x in {-#1,...,-1}{
       \draw[circle,fill] (\x,0)circle[radius=1mm]node[below]{};
    }
         \node at (5.5,-2) { $\downarrow$};
    \node at (4,-3) { $\scriptstyle (\Ci^+_i,k,\Ci^+_p)^+$};
     \node at (-5.5,-2) { $\downarrow$};
\node at (-4,-3) { $\scriptstyle (\Ci^-_p,\overline k,\Ci^-_i)^-$};  
\node at (0,0) { $\scriptstyle \dots$}; 
     \node at (-7.5,3) { $(a)$};
  \end{tikzpicture}
}
  \newcommand\MatchingProoffppb[3]{%
     \begin{tikzpicture}[scale=0.4]
        \foreach \x in {1,...,#1}{
       \draw[circle,fill] (\x,0)circle[radius=1mm]node[below]{};
    }
    \foreach \x/\y in {#2} {
       \pgfmathsetmacro{\Radius}{\y/2-\x/2}
       \draw(\x,0) arc[radius=\Radius, start angle=180, end angle=0];
;
    }
    \foreach \x/\y in {#3} {
       \pgfmathsetmacro{\Radius}{\y/2-\x/2}
       \draw(\x,0) arc[radius=\Radius, start angle=-180, end angle=0];
        ;}
         \foreach \x in {-#1,...,-1}{
       \draw[circle,fill] (\x,0)circle[radius=1mm]node[below]{};
    }
         \node at (5.5,-2) { $\downarrow$};
    \node at (4,-3) { $\scriptstyle (\Ci^-_i,k,\Ci^+_p)^+$};
     \node at (-5.5,-2) { $\downarrow$};
\node at (-4,-3) { $\scriptstyle (\Ci^-_p,\overline k,\Ci^+_i)^-$};  
\node at (0,0) { $\scriptstyle \dots$}; 
     \node at (-7.5,3) { $(b)$};
  \end{tikzpicture}
}
  \newcommand\MatchingProoffppc[3]{%
     \begin{tikzpicture}[scale=0.4]
        \foreach \x in {1,...,#1}{
       \draw[circle,fill] (\x,0)circle[radius=1mm]node[below]{};
    }
    \foreach \x/\y in {#2} {
       \pgfmathsetmacro{\Radius}{\y/2-\x/2}
       \draw(\x,0) arc[radius=\Radius, start angle=180, end angle=0];
;
    }
    \foreach \x/\y in {#3} {
       \pgfmathsetmacro{\Radius}{\y/2-\x/2}
       \draw(\x,0) arc[radius=\Radius, start angle=-180, end angle=0];
        ;}
         \foreach \x in {-#1,...,-1}{
       \draw[circle,fill] (\x,0)circle[radius=1mm]node[below]{};
    }
         \node at (5.5,-2) { $\downarrow$};
    \node at (4,-3) { $\scriptstyle (\Ci^-_p,k,\Ci^-_i)^-$};
     \node at (-5.5,-2) { $\downarrow$};
\node at (-4,-3) { $\scriptstyle (\Ci^+_i,\overline k,\Ci^+_p)^+$};  
\node at (0,0) { $\scriptstyle \dots$};  
     \node at (-7.5,3) { $(a)$};
  \end{tikzpicture}
}
  \newcommand\MatchingProoffppd[3]{%
     \begin{tikzpicture}[scale=0.4]
        \foreach \x in {1,...,#1}{
       \draw[circle,fill] (\x,0)circle[radius=1mm]node[below]{};
    }
    \foreach \x/\y in {#2} {
       \pgfmathsetmacro{\Radius}{\y/2-\x/2}
       \draw(\x,0) arc[radius=\Radius, start angle=180, end angle=0];
;
    }
    \foreach \x/\y in {#3} {
       \pgfmathsetmacro{\Radius}{\y/2-\x/2}
       \draw(\x,0) arc[radius=\Radius, start angle=-180, end angle=0];
        ;}
         \foreach \x in {-#1,...,-1}{
       \draw[circle,fill] (\x,0)circle[radius=1mm]node[below]{};
    }
         \node at (5.5,-2) { $\downarrow$};
    \node at (4,-3) { $\scriptstyle (\Ci^-_p,k,\Ci^+_i)^-$};
     \node at (-5.5,-2) { $\downarrow$};
\node at (-4,-3) { $\scriptstyle (\Ci^-_i,\overline k,\Ci^+_p)^+$};  
\node at (0,0) { $\scriptstyle \dots$}; 
     \node at (-7.5,3) { $(b)$};
  \end{tikzpicture}
}
 \newcommand\MatchingProofff[3]{%
     \begin{tikzpicture}[scale=0.8]
        \foreach \x in {1,...,#1}{
       \draw[circle,fill] (\x,0)circle[radius=1mm]node[below]{};
    }
    \foreach \x/\y in {#2} {
       \pgfmathsetmacro{\Radius}{\y/2-\x/2}
       \draw(\x,0) arc[radius=\Radius, start angle=180, end angle=0];
;
    }
    \foreach \x/\y in {#3} {
       \pgfmathsetmacro{\Radius}{\y/2-\x/2}
       \draw(\x,0) arc[radius=\Radius, start angle=-180, end angle=0];
        ;}
         \foreach \x in {-#1,...,-1}{
       \draw[circle,fill] (\x,0)circle[radius=1mm]node[below]{};
    }
      \node at (-2,0.4) { $ \scriptstyle  \start(\Ci^-_p)$};
       \node at (2,0.4) { $ \scriptstyle  \ends(\Ci^+_p)$};
           \node at (-1,0.4) { $ \scriptstyle  \maks(\Ci^-_p)$};
       \node at (1,0.4) { $ \scriptstyle  \maks(\Ci^+_p)$};
          \node at (-4.05,0.4) { $ \scriptstyle  \overline k$};
       \node at (4.05,0.4) { $ \scriptstyle  k$};
     \node at (0,-1.72) { $\downarrow$};
\node at (0,-2.3) { $\scriptstyle (\overline k,\Ci^+_p)^-$};  
\node at (0,0) { $\scriptstyle \dots$}; 
\node at (0,1.3) { $\tyldapi$}; 
       \node at (-4.5,3) { $(a)$}; 
  \end{tikzpicture}
}
\newcommand\MatchingProofequation[3]{%
  \begin{tikzpicture}[scale=0.2]
        \foreach \x in {1,...,#1}{
       \draw[circle,fill] (\x,0)circle[radius=1mm]node[below]{};
    }
    \foreach \x/\y in {#2} {
       \pgfmathsetmacro{\Radius}{\y/2-\x/2}
       \draw(\x,0) arc[radius=\Radius, start angle=180, end angle=0];
;
    }
    \foreach \x/\y in {#3} {
       \pgfmathsetmacro{\Radius}{\y/2-\x/2}
       \draw(\x,0) arc[radius=\Radius, start angle=-180, end angle=0];
        ;}
         \foreach \x in {-#1,...,-1}{
       \draw[circle,fill] (\x,0)circle[radius=1mm]node[below]{};
    }
\node at (0,0) { $\scriptscriptstyle \dots$}; 
  \end{tikzpicture}
}
\title[Positive
definite reflection length in type B/C]{Reflection length with two parameters in the asymptotic representation theory of type B/C and applications}
\author[M.~Bożejko]{Marek Bożejko}
\address{
Instytut Matematyczny,
Uniwersytet Wrocławski,  \mbox{pl.\ Grunwaldzki~2/4,} 50-384
Wrocław, Poland}
\email{bozejko@gmail.com}
\author[M.~Dołęga]{Maciej Dołęga}
\address{
Institute of Mathematics, 
Polish Academy of Sciences, 
ul. Śniadeckich 8, 
00-956 Warszawa, Poland.
}
\email{mdolega@impan.pl}
\author[W.~Ejsmont]{Wiktor Ejsmont}
\address{
Instytut Matematyczny,
Uniwersytet Wrocławski,  \mbox{pl.\ Grunwaldzki~2/4,} 50-384
Wrocław, Poland}
\email{Wiktor.Ejsmont@math.uni.wroc.pl}
\author[Ś. R.~Gal]{Światosław R.~Gal}
\address{
Instytut Matematyczny,
Uniwersytet Wrocławski,  \mbox{pl.\ Grunwaldzki~2/4,} 50-384
Wrocław, Poland}
\email{sgal@math.uni.wroc.pl}
\thanks{
MB is supported from {\it Narodowe Centrum Nauki}, grant NCN  2016/21/B/ST1/00628.\\
MD is supported from {\it Narodowe Centrum Nauki}, grant UMO-2017/26/D/ST1/00186.\\
ŚG is supported from {\it Narodowe Centrum Nauki}, grant NCN 2017/27/B/ST1/01467}
\begin{document}

\begin{abstract}
  We introduce a two-parameter function
  $\phi_{\q,\s}$ on the infinite hyperoctahedral group, which is a
  bivariate refinement of the reflection length. We show that this signed reflection function $\phi_{\q,\s}$ is
  positive definite if and only if it is an
  extreme character of the infinite hyperoctahedral group and we
  classify the corresponding set of parameters $\q,\s$. We construct
  the corresponding representations through a natural action of the
  hyperoctahedral group $B(n)$ on the tensor product of $n$ copies of
  a vector space, which gives a two-parameter analog of the classical
  construction of Schur--Weyl.

  We apply
  our classification to construct a cyclic Fock space of type B
  generalizing the one-parameter construction in type A found
  previously by
  Bożejko and Guta. We also construct a new Gaussian operator
  acting on the cyclic Fock space of type B and we relate its moments with the Askey--Wimp--Kerov
  distribution by using the notion of cycles on pair-partitions, which we
  introduce here. Finally, we explain how to solve the analogous
  problem for the Coxeter groups of type D by using our main result.
\end{abstract}

\maketitle


\section{Introduction}

Positive definite functions on a group $G$ play a prominent role in harmonic analysis, operator theory, free probability and geometric
group theory. When $G$ has a particularly nice structure of
combinatorial/geometric origin one can use it to construct positive
definite functions which leads to many interesting
properties widely applied in these fields. This phenomenon was
first recognized in the pionnering
work of Haagerup \cite{Haagerup1979}, who proved that the function $g
\to q^{\ell_S(g)}$ is positive definite on the free group $F_N$ with
$N$ generators for $-1 \leq q \leq 1$, where $\ell_S$ is the standard word length.
Haagerup's result was applied to show the completely
bounded approximation property (CBAP) of the regular $C^*$-algebra of
the free group $F_N$ \cite{DeCanniereHaagerup1985} and it had an
impact on free probability, non-commutative harmonic analysis and the
operator algebras \cite{HaagerupPisier1993}. Bożejko, Januszkiewicz
and Spatzier, inspired by the work of Haagerup, were studying
arbitrary Coxeter
groups $(G,S)$ and they proved that the \emph{Coxeter function} $g
\to q^{\ell_S(g)}$ is positive definite for
every Coxeter group if and only if $-1 \leq q \leq 1$ \cite{BozejkoJanuszkiewiczSpatzier1988}. Here $\ell_S$
denotes the \emph{Coxeter length}, which is the standard word length
with respect to the set of Coxeter generators:
\begin{equation}
  \label{eq:CoxeterLength}
  \ell_S(g) := \min(k\colon s_1\cdots s_k = g, s_1,\dots,s_k \in S).
  \end{equation}
They additionally showed that this implies that infinite Coxeter groups do
not have Kazhdan's property $(T)$. This result was further generalized
to multi-parameters
\cite{BozejkoSzwarc2003} and also other variants of the Coxeter function (colour-length) were
studied \cite{BozejkoSpeicher1996,BozejkoGalMlotkowski2018}.

All the considered functions share two distinctive features:
\begin{enumerate}[label=(\Roman*), ref=\Roman*]
  \item they are positive definite on the continuous set $-1\leq q \leq
    1$, \label{A}
    \item they are not (generically) invariant by conjugation. \label{B}
    \end{enumerate}
 Functions on a group $G$ which are invariant by conjugation are
 called \emph{central} (also known as \emph{class functions}). In his seminal work Thoma \cite{Thoma1964}
 defined \emph{characters} as positive definite, central functions on
 a group $G$ normalized to take value $1$ at the identity of the
 group. \emph{Extreme characters} are extreme points in the set of all
 characters and they play a prominent role in asymptotic
 representation theory and specifically in the representation theory of
 infinite dimensional groups developed independently by Thoma
 \cite{Thoma1964} in the case of the infinite symmetric group
 $\Sym{\infty}$ and Voiculescu \cite{Voiculescu1974,Voiculescu1976} in the case of infinite dimensional Lie
 groups $U(\infty),SO(\infty),Sp(\infty)$. When $G$ is compact (e.g. finite)
 the extreme characters correspond to normalized traces of the
 irreducible representations, but when $G$ is infinite dimensional,
 the conventional representation theory of irreducible characters does
 not work well and
 the ideas of Thoma and Voiculescu have laid the foundations for the new and
 quickly developing field of asymptotic representation theory.
 This new area of research naturally linked representation theory with
 harmonic analysis, the theory of symmetric functions, probability theory, random matrix theory and mathematical
 physics (see
 \cite{Kerov2003,BorodinOlshanski2017,Meliot2017} for the development
 of the asymptotic representation theory of $\Sym{n}$ and
 $\Sym{\infty}$ with their wide applications).
The most natural way to modify the Coxeter function $g \to q^{\ell_S(g)}$ in order to
obtain its analog which is central on $G$ is to replace the Coxeter
length $\ell_S$
by the \emph{reflection length} $\ell_{\RRR}$, i.e. the minimal number
of reflections that we need to use to decompose $g$ as their product
\[ \ell_{\RRR}(g) = \min(k\colon r_1\cdots r_k = g, r_1,\dots,r_k \in \RRR
  := \{gsg^{-1}: s \in S, g \in G\}).\]
The reflection length and factorizations into reflections in general are ubiquitous in the enumerative problems of finite
Coxeter groups, or more generally (well-generated) complex reflection
groups \cite{Hurwitz1891,Looijenga1974,Bessis2015}. In the case of the infinite symmetric group $\Sym{\infty}$,
which is the inductive limit of the ascending tower of the symmetric groups $\Sym{1} < \Sym{2}
< \cdots$ one can show that the \emph{reflection function} $g \to
q^{\ell_{\RRR}(g)}$ is a character of $\Sym{\infty}$ for
$q^{-1} \in \Z$. This is a straightforward consequence of the
description of the Thoma simplex given by Vershik and Kerov
\cite{VershikKerov1981} -- in this case its restriction to the finite
symmetric group $\Sym{n}$ gives the normalized trace of the natural
action of the symmetric group on $n$ tensor copies af an $N$-dimensional
vector space appearing in the classical construction of Schur--Weyl. Bożejko and Guta
\cite{BozejkoGuta2002} used this positive definite function to construct a white noise functor and also to obtain certain
``exclusion principle'' of the operator counting the number of
one-particle $f$-states. It is interesting to ask whether these are
the only parameters $q$ for which the reflection function is a
character. The answer for this question is affirmative\footnote{this is not discussed in
  the work of Bożejko and Guta \cite{BozejkoGuta2002}, but it can be shown by the same
  methods as we are using in this paper to prove our main theorem -
  see \cref{remark:SymInfty}.}
(after adding the point $q=0$), which shows that the properties
\eqref{A}, \eqref{B} of the Coxeter function seem to be
correlated. In particular positive definiteness of the reflection
function is a different and complicated problem, strongly dependent on the choice of the
underlying group.

In this paper we generalize the notion of the reflection
function in the case of the Coxeter group of type B, that is the hyperoctahedral group $B(n)$, by
introducing its two-parameter version. In contrast to type $A$,
where all the reflections are conjugated, there are two conjugacy
classes of reflections in Coxeter groups of type $B$ and we can refine the reflection function by
introducing the \emph{signed reflection function} which distinguishes \emph{short}
and \emph{long reflections} appearing in the factorization of a given
element (see
\cref{def:SignedReflection} for the definitions):
\[ \sigma_{\q,\s}(g) := \q^{\ell_{\RRR_+(g)}}\s^{\ell_{\RRR_-(g)}}.\]
It is clear that this is a two-parameter refinement of the reflection
function, which can be obtained by the substitution $\q=\s=q$. We prove in
\cref{sec:preliminaries} that the signed reflection function is
central on the hyperoctahedral group for any value of $\q,\s \in \C$
and all orders $n \geq 0$, thus it gives rise to a central function on
the infinite group $B(\infty)$, which is the inductive limit of the
ascending tower of the hyperoctahedral groups $B(1) < B(2) < \cdots$. Our main theorem, proved in \cref{sec:main}, gives
the complete characterization of the set of parameters $\q,\s$ for which the
signed reflection function is positive definite.

\begin{theorem}
  \label{theo:PositiveDefinite'}
 Let $\q,\s \in \C$. The following conditions are equivalent:
  \begin{enumerate}[label=(\roman*), ref=\roman*]
  \item The signed reflection function $\phi_{\q,\s}\colon B(\infty) \to \C$ is
    positive definite on $B(\infty)$;
    \label{th:i}
 \item The signed reflection function $\phi_{\q,\s}\colon B(\infty) \to \C$ is
   a character of $B(\infty)$;
   \label{th:ii}
    \item The signed reflection function $\phi_{\q,\s}\colon B(\infty) \to \C$ is
      an extreme character of $B(\infty)$;
      \label{th:iii}
 \item $\q = \frac{\epsilon}{M+N},
    \s=\frac{M-N}{M+N}$ for $M,N \in \N, M+N\neq 0$, $\epsilon \in
    \{1,-1\}$ or \newline$\q=0, -1 \leq \s \leq 1$.
    \label{th:iv}
  \end{enumerate}
\end{theorem}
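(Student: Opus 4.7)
The implications (iii)$\Rightarrow$(ii)$\Rightarrow$(i) are immediate from Thoma's definition: an extreme character is in particular a character, and a character is by definition a positive definite normalised class function. The real work therefore lies in (iv)$\Rightarrow$(iii) and (i)$\Rightarrow$(iv), which I would tackle separately.

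For (iv)$\Rightarrow$(iii) the plan is to realise $\phi_{\q,\s}$ explicitly as the normalised character of a unitary representation of $B(\infty)$; positive definiteness and centrality then come for free, and extremality will follow from the fact established in \cref{sec:preliminaries} that $\phi_{\q,\s}$ is multiplicative under the inclusion $B(n)\times B(m)\hookrightarrow B(n+m)$, combined with the Vershik--Kerov multiplicativity criterion. For the generic family $\q=\epsilon/(M+N)$, $\s=(M-N)/(M+N)$, I propose a type-B Schur--Weyl construction: take $V=\C^M\oplus\C^N$, let the long reflections permute the tensor slots in $V^{\otimes n}$ and the short reflections act on the appropriate slot as $\mathrm{id}_{\C^M}\oplus(-\mathrm{id}_{\C^N})$, twisted by the global sign $\epsilon^{\ell_{\RRR_+}}$ when $\epsilon=-1$. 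A direct trace computation on basis tensors normalised by $(M+N)^n$ should reproduce $\q^{\ell_{\RRR_+}}\s^{\ell_{\RRR_-}}$, since each signed cycle of a $B(n)$-element contributes exactly the expected factor. For the boundary family $\q=0$, $-1\le\s\le 1$, the function is supported on the normal subgroup $K\cong(\Z/2)^{(\infty)}$ generated by sign changes, on which $\s^{\ell_{\RRR_-}}$ coincides with the Coxeter function of an infinite elementary abelian $2$-group, positive definite throughout that range by Bożejko--Januszkiewicz--Spatzier; extension by zero to $B(\infty)$ using centrality of $\phi_{0,\s}$ completes the construction.

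For (i)$\Rightarrow$(iv) I would invoke the type B analogue of Thoma's theorem, which parametrises the extreme characters of $B(\infty)$ by a ``doubled Thoma simplex'' consisting of two sequences of non-negative parameters with controlled total mass (one sequence for each conjugacy class of reflections). Every positive definite class function on $B(\infty)$ admits an integral representation over this simplex. The multiplicativity of $\phi_{\q,\s}$ across disjoint signed cycles should force the representing measure to be a Dirac mass, so that $\phi_{\q,\s}$ itself must already be an extreme character. Evaluating $\phi_{\q,\s}$ on a single short reflection, a single long reflection, and on signed $k$-cycles of both signatures, and matching against the closed-form expressions for the type B extreme characters, then solves for the Thoma parameters and recovers precisely the two families in (iv).

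The principal obstacle will be this last implication: translating the rigid multiplicative form of $\phi_{\q,\s}$ through the Thoma integral decomposition is delicate, and one must carefully verify that the boundary stratum $\q=0$, $-1\le\s\le 1$ arises as the unique degenerate limit of the generic family that remains positive definite, while ruling out any stray isolated $(\q,\s)\in\C^2$ outside the two listed families. A secondary technical point is that the statement allows a priori $\q,\s\in\C$ whereas (iv) forces reality; this should follow from the standard identity $\phi_{\q,\s}(g^{-1})=\overline{\phi_{\q,\s}(g)}$ forced by positive definiteness, which applied to simple reflections (self-inverse elements) pins both parameters to $\R$.
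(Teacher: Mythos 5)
Your proposal is correct, but it routes the hard implications through genuinely different machinery than the paper. The paper proves (i)$\Rightarrow$(iv) without ever invoking extreme characters: it restricts $\phi_{\q,\s}$ to each finite group $B(n)$, expands it into irreducible characters via Poirier's Frobenius formula combined with the content evaluation $s_\lambda(1^N)=\prod_{\square\in\lambda}(N+c(\square))$, extends the resulting identity from the grid $(M,N)\in\Z_{>0}^2$ to all complex parameters by polynomial interpolation, and then applies the finite-group Bochner-type \cref{lem:PDcharacter}: positive definiteness is equivalent to non-negativity of every coefficient $\prod_{\square\in\lambda^+}\bigl(\q c(\square)+\tfrac{1+\s}{2}\bigr)\prod_{\square\in\lambda^-}\bigl(\q c(\square)+\tfrac{1-\s}{2}\bigr)$, which is precisely condition (iv); the Hirai--Hirai classification enters only afterwards, for (iv)$\Rightarrow$(iii), where the paper simply exhibits the parameters $(\alpha,\beta,\gamma,\delta,\kappa)$ realizing $\phi_{\q,\s}$. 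You instead run both directions through the Hirai--Hirai simplex together with a Vershik--Kerov-type multiplicativity criterion (a character of $B(\infty)$ is extreme iff it factorizes over elements with disjoint supports): once to upgrade your Schur--Weyl construction from positive definiteness to extremality, and once to force the representing measure of a positive definite $\phi_{\q,\s}$ to be a Dirac mass, after which you solve for the Thoma parameters by evaluating on signed cycles. This is viable --- multiplicativity of $\phi_{\q,\s}$ is immediate from \eqref{eq:funkcjadlugosci}, and the power-sum matching (e.g.\ $\sum_i\alpha_i^j+(-1)^{j-1}\sum_i\beta_i^j=\q^{j-1}\tfrac{1+\s}{2}$ for all $j\geq 2$) does force atoms of size $|\q|$ with integer multiplicities, recovering exactly the families in (iv) --- but note the trade-offs: your argument requires the multiplicativity criterion for $\Z_2\wr\Sym{\infty}$ as a second heavy external input, which the paper never needs and which you must cite or prove, and the moment-uniqueness step you only sketch is where the work of excluding stray parameters actually sits. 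In exchange you avoid symmetric functions entirely and obtain (i)$\Rightarrow$(iii) directly and conceptually. The paper's route, by contrast, yields the explicit expansion \eqref{eq:rozklad} valid for all complex $\q,\s$, which is reused later (e.g.\ in the type D appendix). Your Schur--Weyl action coincides with the paper's (your twist $\epsilon^{\ell_{\RRR_+}}$ is the paper's $\epsilon^{\|\rho^+\|+\|\rho^-\|}$), your handling of the degenerate family $\q=0$ by extension by zero from the sign subgroup is a correct alternative to the paper's treatment, and your observation that positive definiteness forces $\q,\s\in\R$ via $\phi(g^{-1})=\overline{\phi(g)}$ evaluated on reflections cleanly settles the complex-parameter issue that the paper handles implicitly through the sign conditions on the coefficients.
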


Note that the set of parameters for which the signed reflection
function is positive definite is a discrete set (except the degenerate case
$\q=0$), which confirms that the behaviour of the reflection function
is very different to its Coxeter counterpart and that the
properties \eqref{A} and \eqref{B} are correlated. Another difference
is visible in the strong correlation the parameters $\q$ and $\s$, while the parameters in the multivariate versions of the
Coxeter function studied in
\cite{BozejkoSzwarc2003,BozejkoGalMlotkowski2018} can take any value
in the interval $[-1,1]$ independently. The methods used in the
previously mentioned works on the Coxeter functions are not applicable
here and our approach is based on the connection between
the representation theory of the hyperoctahedral group and symmetric
functions. We prove that the Frobenius formula for the
hyperoctahedral group expressed in terms of symmetric functions
provides the full information on the expansion of the signed
reflection function into normalized irreducible characters. Additionally, for the parameters $N,M,\epsilon$ and the
associated parameters $\q,\s$ described in
\cref{theo:PositiveDefinite'} we construct an explicit action of the
hyperoctahedral group $B(n)$ on the tensor product of $n$ copies of
the $N+M$-dimensional space $V$, whose normalized character is given
by the signed reflection function $\phi_{\q,\s}$. This action gives a
representation which is a
two-parameter analog of the construction of Schur-Weyl.

In \cref{sec:Applications} we describe various applications of our
main theorem in theory of operator algebras and probability theory: we start by
constructing a cyclic Fock space of type B and proving in
\cref{commutation} that it admits a new cyclic commutation relation of
type B:
\[ \B(x\otimes y)\B^\ast(\xi\otimes \eta)= \langle  x,\xi
  \rangle\langle  y,\eta \rangle \id +\s \langle  x, \eta
  \rangle\langle  y, \xi \rangle \id  +   \Gamma_{\q}
  (|\xi\rangle\langle x| \otimes |\eta\rangle\langle y| ),\]
where $\B(x\otimes y)$ and $\B^\ast(\xi\otimes \eta)$ are the annihilation and
creation operators deformed by the use of the signed reflection
function $\phi_{\q,\s}$ and $\Gamma_{\q}$ is the differential second
quantisation operator (see \cref{subsec:Fock} for the precise definitions).
  In \cref{subsec:Exclusion} we deduce the analog of the Pauli exclusion
principle on this space. These results extend
the previous work of Bożejko and Guta \cite{BozejkoGuta2002} in Coxeter groups of type
A to type B and refine it by introducing two parameters
$\q,\s$. Note that the different constructions for type B were presented in
\cite{BozejkoEjsmontHasebe2015,BozejkoEjsmontHasebe2017}, where the
role of the signed reflection function was played by the Coxeter
function, which is not a character of $B(\infty)$. In
\cref{subsec:PartitionsB} we introduce the notion of positive and
negative cycles on the set of generalized symmetric pair-partitions, which share
many similarities with the  partitions of type B introduced by
Reiner \cite{Reiner1997}. We find the explicit formula (\cref{thm2}) for the moments of the
generalized cyclic Gaussian operator of type B that we introduce in
\cref{subsec:Gaussian}. This formula is an analog of the Wick-type
formula for Coxeter groups of type B and is naturally expressed in terms of the
combinatorial statistic on the set of
positive and negative cycles, developed in~\cref{subsec:PartitionsB}. We show that the probability
distribution $\mu_{\q,\s}$ of the cyclic Gaussian operator of type B with respect to
the vacuum state is strictly related to the Askey--Wimp--Kerov
distribution (\cref{rozkladgaussa}) and the associated Hermite polynomials:
$$t H_n(t) = H_{n+1}(t) +(n+c){H}_{n-1}(t), \qquad n=0,1,2,\dots,
\qquad {H}_{-1}(t)=0, {H}_0(t)=1.$$
It is known
\cite{BelinschiBozejkoLehnerSpeicher2011} that the Askey--Wimp--Kerov
distribution $\nu_c$ is freely infinitely divisible for $c \in [-1,0]$.
Our work might shed new light on this
phenomenon. 
We use the connection between the moments of the cyclic Gaussian
operator of type B and the moments of the Askey--Wimp--Kerov
distribution to show that the
bivariate generating function of the symmetric pair-partitions (which
distinguishes negative and positive cycles)
specializes to the moments of the classical probability
measures. In particular, this gives new combinatorial interpretations of the
classical sequences $\frac{2^n}{n+1}\binom{2n}{n}$ and
$\frac{(2n)!}{n!}$ and their deformations with respect to certain
combinatorial statistics.
Finally, we discuss the analogous problem for Coxeter groups of type D and we show that
it can be obtained as a special case of our main result. In
particular, we
classify the positive-definite reflection functions for all the
irreducible infinite Coxeter groups of Weyl type.

\section{Coxeter groups of type B/C}
\label{sec:preliminaries}

\subsection{Preliminaries on Coxeter groups of type B/C}

General Coxeter system $(W,S)$ is defined as a group $W$ with chosen
set $S$ of generators such that $W$ can be presented, with help of auxiliary symmetric function $m\colon S\times S\to\mathbb{N}\cup\{\infty\}$
satisfying $m_{st}=1$ if and only if $s=t$, as
\[ W=\langle S| (st)^{m_{st}} \textrm{ for } m_{st}<\infty\rangle. \]
Elements conjugated to the generators are called \emph{reflections}.  Two generators $s$ and $t$ are conjugated
if an only if there exists a sequence of generators $s=s_1,\dots,s_n=t$ such that all $m_{s_is_{i+1}}$ are odd.

The main example for this paper is \emph{the hyperoctahedral group}
$B(n)$, which is the group of symmetries of the
$n$-dimensional hypercube. It is a finite real reflection group of type
B/C and rank $n$ generated by reflections $s_0,\dots,s_{n-1}$ where $s_0$ reflects in the plane $x_1=1$
and, for $i>0$, $s_i$ reflects in the plane $x_i=x_{i+1}$.

In terms of Coxeter groups it is defined with help of the function 
\[ m_{s_is_j}=\begin{cases}
    1&\textrm{if }i=j \in \{0,\dots,n-1\},\\
    2&\textrm{if }|i-j|>1,\\
    4&\textrm{if }\{i,j\} = \{0,1\},\\
    3&\textrm{otherwise.}
  \end{cases}
\]
The Coxeter diagram for $B(n)$  is described in \cref{fig:BN}.
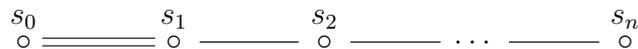
\begin{figure}[h]
\begin{center}
\begin{tikzpicture} 
  [scale=.5,auto=left,every node/.style={circle}]
  \node (n7) at (1,3.6) {$s_{n}$};  
  \node (n6) at (1,3) {$\circ$};
  \node (n5) at (-3,3)  {$\dots$};
\node (n3) at (-7,3.6)  {$s_2$};
\node (n1) at (-7,3)  {$\circ$};
\node (n4) at (-11,3.6)  {$s_{1}$};
\node (n2) at (-11,3)  {$\circ$};
\node (n8) at (-15,3.6)  {$s_{0}$};
\node (n0) at (-15,3)  {$\circ$};

  \foreach \from/\to in {n2/n1, n1/n2,n1/n5,n5/n6}
    \draw (\from) -- (\to);
 \draw  (-14.5,3.1) -- (-11.5,3.1);   
  \draw  (-14.5,2.9) -- (-11.5,2.9);      
\end{tikzpicture}
\caption{Coxeter diagram for $B(n).$}
\label{fig:BN}
\end{center}
\end{figure}

One can also define the group $B(n)$ as wreath product $\Z_2\wr
\Sym{n}$ with multiplication given by
\[ (g_1,\dots,g_n;\sigma)\cdot  (g_1',\dots,g_n';\sigma') =
  (g_1g_{\sigma^{-1}(1)}',\dots, g_ng_{\sigma^{-1}(n)}'; \sigma\sigma'),\]
where $g_i,g_i' \in \{1,-1\}, \sigma, \sigma' \in \Sym{n}$. We refer
to this presentation as \emph{the signed model}.

In this model $s_0$ corresponds to $((-1,1,\dots,1),\textrm{id})$ and
$s_i$ corresponds to $((1,\dots,1),(i\ i+1))$.

  Denote by $[\pm n] $ the set of integers $\{\overline{n},\dots,\overline{1},1\dots,n\}$. The
hyperoctahedral group $B(n)$ can be also realised as the group of
permutations $\sigma$ of the set $[\pm n]$ such that
$\sigma(\overline{i}) = \overline{\sigma(i)}$ for \sloppy any $i \in
[\pm n]$ (with the convention that $\overline{\overline{i}} =
i$). These are precisely parmutations of $[\pm n] $ which commute with
the involution $(\overline{n} n)\cdots (\overline{1} 1)$. We will
refer to this realization as \emph{the permutation model}.

\begin{example}
  \label{ex:1}

Let $\sigma = (1 \overline{2} 4)(\overline{1} 2 \overline{4})(3 \overline{5} \overline{3} 5)(6)(\overline{6}) \in B(6)$ be an
element of the hyperoctahedral group $B(6)$ considered as a
permutation. Note that this element is uniquely determined by the
word 
$$w(\sigma) :=
(\sigma(1),\sigma(2),\sigma(3),\sigma(4),\sigma(5),\sigma(6) )=
(\overline{2},\overline{4},\overline{5},1,3,6).$$
This word
is uniquely determined by the permutation
\[
 w_+(\sigma) :=\bigl(\begin{smallmatrix}
    1 & 2 & 3 &  4 & 5 & 6 \\
    |\sigma(1)|& |\sigma(2)|&|\sigma(3)|&|\sigma(4)|&|\sigma(5)|&|\sigma(6)| 
  \end{smallmatrix}\bigr)= \bigl(\begin{smallmatrix}
    1 & 2 & 3 &  4 & 5 & 6 \\
    2 &4&5 & 1 & 3 &6
  \end{smallmatrix}\bigr) = (1 2 4)(3 5)(6)
\]
and by the sequence of signs $(\sgn(1)
\text{ in } w(\sigma),\dots, \sgn(6)
\text{ in } w(\sigma)) = (1,-1,1,-1,-1,1)$. Here
\[ \sgn(i) = \begin{cases} 1 &\text{ if } i \in \{1,2\dots,n\},\\-1
    &\text{ if } i \in
    \{\overline{1},\overline{2}\dots,\overline{n}\},\end{cases}\]
and $|\cdot|\colon [\pm n] \to [n]$ is the absolute value with the
  obvious definition.  
Therefore the associated
element in $\Z_2\wr
\Sym{6}$ is given by $(1,-1,1,-1,-1,1;(124)(35)(6))$. This procedure
adapted for arbitrary $n$ gives
an isomorphism between the permutation model and the signed model.
\end{example}

\subsection{Conjugacy classes}
A \emph{partition} $\rho$ \emph{of size $n$}, where $n$ is a non-negative integer (also denoted
$|\rho| = n$ or $\rho \vdash n$) is a non-increasing sequence
$(\rho_1,\dots,\rho_\ell)$ of integers which sum up to $n$:
\[ \sum_i \rho_i = n.\]
The integer $\ell$ is called the \emph{length} of the partition $\rho$
and it is denoted by $\ell(\rho)$. There is a unique partition $\rho =
\emptyset$ of size $0$ by convention.

The conjugacy classes  of $B(n)$
are naturally identified with pairs of partitions $(\rho^+,\rho^-)$ of
total size at most $n$, where the first partition $\rho^+$ has no parts
equal to $1$, i.e.
\begin{equation}
  \label{eq:ConjClassesCond}
  |\rho^+|+|\rho^-| \leq n; \qquad \rho^+_{i} > 1\text{ for } i=1,\dots,\ell(\rho^+).
  \end{equation}

\begin{remark}
  \label{rem:ConjCl}
  Note that these partitions are in a natural bijection with the set
  of pairs of partitions $(\rho^+,\rho^-)$ of
total size equal to $n$. Indeed, if $\tilde{\rho}^+$ is obtained from
$\rho^+$ by removing all its parts equal to $1$, then the pair
$(\tilde{\rho}^+,\rho^-)$ satisfies the conditions \cref{eq:ConjClassesCond}. This is
clearly invertible and for every pair of partitions $(\rho^+,\rho^-)$
which satisfies \cref{eq:ConjClassesCond} there is a unique way of adding an appropriate
number of parts equal to $1$ to $\rho^+$ to obtain a pair of
partitions of total size equal to $n$.
\end{remark}

This
identification is given by the following procedure. For each element
$(g_1,\dots,g_n;\sigma) \in B(n)$ we write $\sigma = c_1\cdots
c_\ell$ as the product of disjoint cycles (fix-points of $\sigma$ do not appear
in this decomposition). For any cycle $c = (a_1,\dots,a_k)$ we define
\[\sgn(c) := g_{a_1}\cdots g_{a_k} = \pm 1.\]
Then $\rho^+$ is the
partition given by the lengths of \emph{positive cycles}
\[\{c_i: i \in [\ell], \sgn(c_i)
  = 1\}\]
and $\rho^-$ is the
partition given by the lengths of \emph{negative cycles}
\[\{c_i: i \in [\ell], \sgn(c_i)
= -1\}.\]


In the permutation model the pair of partitions
$(\rho^+,\rho^-)$ can be understood as follows. Let $\sigma \in B(n)$
be presented as a product of disjoint cycles $\sigma = c_1\cdots
c_k$ (here, as before, fix-points do not appear as cycles in the
decomposition). For any $i \in [k]$ consider a cycle $\overline{c_i} :=
(\overline{a^i_1}\cdots \overline{a^i_{k_i}})$, where $c_i = (a^i_1\cdots a^i_{k_i})$. Then either $c_i$
is disjoint with $\overline{c_i}$ (and we call it a \emph{positive cycle}) or $c_i =
\overline{c_i}$ (and we call it a \emph{negative cycle}). Let $\tilde{\rho}^+$ denote
the partition of lengths of positive cycles and $\tilde{\rho}^-$ denote
the partition of lengths of negative cycles. Note that
$\tilde{\rho}^+$ is necessarily of the form
$(\rho^+_1,\rho^+_1,\dots,\rho^+_\ell,\rho^+_\ell)$ and
$\tilde{\rho}^-$ is necessarily of the form
$(2\rho^-_1,\dots,2\rho^-_{\ell'})$. Then the pair $(\rho_+,\rho_-)$
is given by $\rho^+ = (\rho^+_1,\dots,\rho^+_\ell)$ and  $\rho^- =
(\rho^-_1,\dots,\rho^-_{\ell'})$.

We denote by $\CCC_{\rho^+,\rho^-}$ the conjugacy class associated
with the pair $(\rho_+,\rho_-)$.

\begin{example}
\label{ex:2}
We continue with \cref{ex:1}
 \begin{align*}
   \sigma &= (1,-1,1,-1,-1,1;(124)(35)(6)) \in
  \CCC_{\rho^+,\rho^-} \subset  B(6),
  \intertext{ where $\rho^+ = (3), \rho^- =
  (2)$. Using different presentation we have }
 \sigma &= (1 \overline{2} 4)(\overline{1} 2 \overline{4})(3 \overline{5} \overline{3} 5),
   \end{align*}
   which consists of one pair of positive
  cycles $(1 \overline{2} 4)(\overline{1} 2 \overline{4})$ of length $3$ and one negative cycle
  $(3 \overline{5} \overline{3} 5)$ of length $4 = 2\cdot 2$.
\end{example}





\subsection{Central functions}
\label{subsec:Infinite}

A function $\phi : G \to \C$ on a group $G$ is \emph{central} if it is
conjugacy invariant
\[\phi(g) = \phi(hgh^{-1}) \text{ for any } g,h \in G.\]
In
particular $\phi$ is fixed on the conjugacy classes.

For a Coxeter group $(G,S)$ and an element $g \in G$ 
we denote by
$\ell_{\RRR}(g)$ its \emph{reflection length}, i.e. the minimal number
of reflections that we need to use to decompose $g$ as their product
\[ \ell_{\RRR}(g) = \min(k : r_1\cdots r_k = g, r_1,\dots,r_k \in \RRR
  := \{gsg^{-1}: s \in S, g \in G\}).\]

The set of reflections in the symmetric group $\Sym{n}$ is given
by all the transpositions, and for a permutation $\sigma \in
\Sym{n}$ its reflection length $\ell_{\RRR}(\sigma)$ is naturally expressed
in terms of lengths of cycles of $\sigma$. Indeed, let $\rho \vdash n$
be a partition whose parts are equal to lengths of cycles in
$\sigma$. Then
\begin{equation}
  \label{eq:LengthInPerm}
  \ell_{\RRR}(\sigma) = \sum_{i=1}^{\ell(\rho)}(\rho_i-1) =
  |\rho|-\ell(\rho) =: \|\rho\|.
  \end{equation}

A similar formula holds true for the hyperoctahedral group. The set of reflections $\RRR$ in $B(n)$ consists of two conjugacy
classes $\RRR_+$ and $\RRR_-$ that we will call \emph{long
  reflections}, and \emph{short reflections}, respectively. These names
reflects the fact that the corresponding roots have length
$\sqrt{2}$ in tha case of $\RRR_+$ and $1$ in the case of $\RRR_-$. In the
signed model these reflections are given by

\begin{align*}
  \mathcal{R}_+ &=
                  \{(\overbrace{\underbrace{1,\dots,1,\epsilon}_i,1,\dots,1,\epsilon}^j,1,\dots,1;(i,j))
                  \colon \epsilon =\pm 1, 1 \leq i < j \leq n\}, \\
  \mathcal{R}_- &= \{(\underbrace{1,\dots,1,-1}_i,1,\dots,1;\id) \colon i \in
                  [n]\},
\end{align*}

and in the permutation model by

\begin{align*}
  \mathcal{R}_+ &= \{(ij)(\overline{i}\overline{j})\colon i,j \in [\pm n],  |i|
                  \neq |j|\}\\
  \mathcal{R}_- &= \{(i\overline{i}) \colon 1 \leq i \leq n\}.
\end{align*}

It is easy to see that we need at least $k-1$ reflections
$r_1,\dots,r_{k-1}$ to express a positive cycle $c_+ \in B(n)$ of length
$k$  (in the
signed model) as their product $c_+ = r_1\cdots r_{k-1}$. Similarly, we need at least $k$ reflections
$r_1,\dots,r_{k}$ to write a negative cycle $c_- \in B(n)$ of length
$k$ as a product $c_- = r_1\cdots r_{k}$. In particular, the
reflection length of $\sigma$ is given by
\[ \ell_{\RRR}(\sigma) =
  \sum_{i=1}^{\ell(\rho^+)}(\rho^+_i-1)+\sum_{i=1}^{\ell(\rho^-)}(\rho^-_i)
  = \|\rho^+\|+|\rho^-|,\]
where $\rho^+$ and $\rho^-$ list the lengths of positive and negative
cycles in $\sigma$, i.e.~$\sigma \in \CCC_{\rho^+,\rho^-}$.

In the case of the hyperoctahedral group $B(n)$ the notion of long
and short reflections suggests the possibility of extending the
univariate central function $\sigma \to q^{\ell_{\RRR}(\sigma)}$ to its
bivariate refinement $\sigma \to
\q^{\ell_{\RRR_+}(\sigma)}\s^{\ell_{\RRR_-}(\sigma)}$ defined as follows.

Suppose that $\sigma \in B(n)$ is expressed as a product of
reflections, where the number of reflections is minimal:
\begin{equation}
  \label{eq:factor}
\sigma=r_{1} \cdots r_{k}, \qquad r_i \in \RRR,
\end{equation}
Then, we would like to set
 \begin{align*}
\ell_{\RRR_+}(\sigma)=\ &\text{The number of long reflections $r_{i}, 1
                  \leq i \leq k$}\\ &\text{appearing in the factorization \cref{eq:factor}}, \\
\ell_{\RRR_-}(\sigma)=\ &\text{The number of short reflections $r_{i}, 1
                  \leq i \leq k$}\\ &\text{appearing in the factorization \cref{eq:factor}}.
 \end{align*}
The problem is that the functions $\ell_{\RRR_+}, \ell_{\RRR_-}$ are
not well-defined, since the number of
long/short reflections appearing in the minimal factorization
\cref{eq:factor} is not an invariant of the factorization. The minimal
example to see this can be already realized in $B(2)$, where the
element $(-1,-1;\id)$ (or the corresponding element $(1\overline{1})(2\overline{2})$ in the permutation
model) can be expressed as the product of two negative reflections
\[ (-1,-1;\id) = (1,-1;\id)\cdot (-1,1;\id);\qquad  (1\overline{1})(2\overline{2}) =
  (1\overline{1})\cdot(2\overline{2}),\]
but also as the product of two long reflections
\[ (-1,-1;\id) = (-1,-1;(12))\cdot (1,1;(12));\ \  (1\overline{1})(2\overline{2}) =
  (12)(\overline{1}\overline{2})\cdot (1\overline{2})(\overline{1}2).\]
In fact, it can be proved that the definition
\cref{eq:funkcjadlugosci'} does not depend on the choice of the
factorization \cref{eq:factor} if and only if $\sigma$ contains at
most one negative cycle, which is the consequence of the work
\cite{BaumeisterGobetRobertsWegener2017}. Nevertheless, there is a way
to correct the definition of
$\ell_{\RRR_+}(\sigma),\ell_{\RRR_-}(\sigma)$ which allows us to
introduce a bivariate central function on $B(n)$ called the \emph{signed
  reflection function} $\phi_{\q,\s} : B(n) \to \C$.

Consider the permutation model of $B(n)$. We say that a factorization
\cref{eq:factor} of $\sigma \in
B(n)$  is \emph{minimal, non-mixing} if:
\begin{enumerate}[label=(F\arabic*), ref=F\arabic*]
  \item \label{M} the number $k$ of reflections
    is minimal,
    \item \label{NM} for each $1 \leq i \leq k$ the support of
$r_i$ belongs to a cycle of $\sigma$. In other terms the orbits
of the action of $r_i$ on $[\pm n]$ form a
sub-partition of the orbits of the action of $\sigma$ on
$[\pm n]$ for each $1 \leq i \leq k$.
\end{enumerate}

\begin{definition}
  \label{def:SignedReflection}
  Let $\q,\s \in \C$ be parameters, and let $\sigma \in B(n)$.
Let
 \begin{align*}
\ell_{\RRR_+}(\sigma)=\ &\text{The number of long reflections $r_{i}, 1
                  \leq i \leq k$}\\ &\text{appearing in the minimal,
                                            non-mixed factorization \cref{eq:factor}}, \\
\ell_{\RRR_-}(\sigma)=\ &\text{The number of short reflections $r_{i}, 1
                  \leq i \leq k$}\\ &\text{appearing in the minimal,
                                            non-mixed factorization \cref{eq:factor}}.
 \end{align*}
We define the \emph{signed
  reflection function} $\phi_{\q,\s}\colon B(n) \to \C$ by
\begin{align}
\label{eq:funkcjadlugosci'}
\phi_{\q,\s}(\sigma) &:= \q^{\ell_{\RRR_+}(\sigma)}\s^{\ell_{\RRR_-}(\sigma)}.
\end{align}
\end{definition}

It is clear (assuming that the minimal, non-mixing factorizations
exist, which is proved in \cref{prop:well-def,lem:Factorization}) that
this function can be interpreted as a bivariate refinement of the
reflection function. Indeed
\[ \ell_{\RRR_+}(\sigma)+\ell_{\RRR_-}(\sigma) =
\ell_{\RRR}(\sigma)\]
so for $\q=\s=q$ we recover the reflection length function $\phi_{\q,\s}(\sigma) =
q^{\ell_{\RRR}(\sigma)}$.

\begin{proposition}
  \label{prop:well-def}
  The signed reflection function $\phi_{\q,\s}$ is central on
  $B(n)$ and for the element $\sigma \in \CCC_{\rho^+,\rho^-}$ of the
  conjugacy class associated with the pair $(\rho^+,\rho^-)$ it is given by the explicit formula:
\begin{align}\label{eq:funkcjadlugosci}
\phi_{\q,\s}(\sigma) &= \q^{\|\rho^+\|+\|\rho^-\|}\s^{\ell(\rho^-)}.
\end{align}
\end{proposition}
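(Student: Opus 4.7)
My plan is to work cycle-by-cycle. First, I would observe that the non-mixing condition \cref{NM} combined with minimality forces any minimal non-mixing factorization $\sigma=r_{1}\cdots r_{k}$ to split as a concatenation, one block per non-trivial cycle of $\sigma$: each $r_{i}$ is supported in a single cycle of $\sigma$, and the subproduct of those $r_{i}$ supported in a fixed cycle $c$ must equal $c$ itself (otherwise one could drop some factors and contradict minimality). This reduces \cref{eq:funkcjadlugosci} to the case in which $\sigma$ is a single positive cycle of length $k$ or a single negative cycle of length $k$.

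The crucial tool will be the forgetful projection $p\colon B(n)\to\Sym{n}$ sending $(g_{1},\dots,g_{n};\tau)\mapsto\tau$; in the permutation model this is the quotient identifying $i$ with $\overline{i}$. Under $p$ a long reflection $(ij)(\overline{i}\overline{j})$ becomes the transposition $(|i|,|j|)$, while a short reflection $(i\overline{i})$ becomes the identity; both a positive cycle of length $k$ and a negative cycle of length $k$ map to a single $k$-cycle in $\Sym{n}$. Applying $p$ to a minimal non-mixing factorization and invoking the classical fact that a $k$-cycle in $\Sym{n}$ cannot be written as a product of fewer than $k-1$ transpositions, I conclude that any minimal non-mixing factorization of a single cycle of length $k$ contains at least $k-1$ long reflections.

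To count the short reflections precisely I would use the ``total sign'' character $\epsilon\colon B(n)\to\{\pm 1\}$ given in the signed model by $\epsilon(g_{1},\dots,g_{n};\tau)=\prod_{i}g_{i}$, which equals $\prod_{c}\sgn(c)=(-1)^{\ell(\rho^{-})}$ on $\CCC_{\rho^{+},\rho^{-}}$. Since $\epsilon$ is trivial on long reflections and sends every short reflection to $-1$, the number $s$ of short reflections in any factorization satisfies $s\equiv\ell(\rho^{-})\pmod{2}$. Combined with the identity $\ell_{\RRR}(\sigma)=\|\rho^{+}\|+|\rho^{-}|$ recalled in the preceding text and the lower bound on the number of long reflections obtained above, this pins down, for a single cycle, $s=0$ with $k-1$ long reflections in the positive case and $s=1$ with $k-1$ long reflections in the negative case.

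Existence of a minimal non-mixing factorization realising these counts is immediate from standard explicit decompositions: for a positive cycle $(a_{1}\cdots a_{k})(\overline{a_{1}}\cdots \overline{a_{k}})$ take a telescoping product of $k-1$ long reflections $(a_{1}a_{j})(\overline{a_{1}}\overline{a_{j}})$ for $j=2,\dots,k$; for a negative cycle append a short reflection $(a_{1}\overline{a_{1}})$ to an analogous long product. Summing the per-cycle contributions yields $\sum_{i}(\rho^{+}_{i}-1)+\sum_{j}(\rho^{-}_{j}-1)=\|\rho^{+}\|+\|\rho^{-}\|$ long reflections and $\ell(\rho^{-})$ short reflections in a minimal non-mixing factorization of $\sigma\in\CCC_{\rho^{+},\rho^{-}}$, proving \cref{eq:funkcjadlugosci}; centrality is then automatic, since the right-hand side depends only on the conjugacy class. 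The main subtlety---and the sole place where the non-mixing hypothesis is genuinely used---is the per-cycle uniqueness of the counts: without it the non-uniqueness phenomenon of \cite{BaumeisterGobetRobertsWegener2017} would reappear as soon as $\sigma$ has two or more negative cycles.
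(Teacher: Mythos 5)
Your proof is correct, and its skeleton --- reduce to a single positive or negative cycle via the non-mixing condition, then sum the per-cycle counts --- is the same as the paper's. The genuine difference lies in how the single-cycle counts are pinned down. For a positive cycle the paper uses subadditivity of the transposition norm on $[\pm n]$ (a long reflection has norm $2$, a short one norm $1$, and $\|c\cdot\overline{c}\|=2\|c\|$); your projection $p\colon B(n)\to\Sym{n}$ is essentially the same count in homomorphism form. For a negative cycle, however, the paper rules out the all-long alternative by an induction on the norm (multiply $c$ by a long reflection $r$ supported inside it, analyse the cycle structure of $c\cdot r$, and invoke the inductive hypothesis), whereas you use the sign character $\epsilon(g_1,\dots,g_n;\tau)=\prod_i g_i$, which is trivial on $\RRR_+$ and equals $-1$ on $\RRR_-$. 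This yields the stronger statement that the number of short reflections in \emph{any} factorization --- minimal or not, non-mixing or not --- is congruent to $\ell(\rho^-)$ modulo $2$, and together with the projection bound it fixes the counts at once. That buys you two things: you avoid the induction entirely (including the delicate point that, after multiplying by a long reflection, the leftover factorization is not obviously non-mixing for the new element, so the inductive hypothesis does not apply verbatim), and you get a conjugation-class invariant valid for arbitrary factorizations. You are also more explicit about existence, via the telescoping factorizations, which the paper partly delegates to \cref{lem:Factorization}. One cosmetic correction: in your reduction step, the subproduct of the reflections supported in a fixed cycle $c$ equals $c$ not because ``one could drop some factors'', but because reflections supported in distinct cycles commute, so the product factors over the cycles and restricting to the support of $c$ forces the block product to equal $c$ --- the same commutativity observation the paper makes.
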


\begin{proof}
  We need to prove that $\ell_{\RRR^+}$ and $\ell_{\RRR^-}$ do not
  depend on the choice of the minimal, non-mixed factorization
  \eqref{eq:factor}. Strictly from the definition (non-mixing
  property \eqref{NM}), it is enough to prove the statement for the case when
  $\sigma$ is a positive/negative cycle. Indeed, each pair of reflections in the
  factorization \eqref{eq:factor} whose
  support belong to different orbits of $\sigma$ commutes, therefore the functions $\ell_{\RRR_+},\ell_{\RRR_-}$ are additive with
   respect to the decomposition into disjoint cycles. Let $\|\sigma\|$ denote the minimal number of transpositions to
  write $\sigma$ as its product (so that $\|\sigma\| = \|\rho\|$ for
  the partition $\rho$ which lists the lengths
  of cycles in $\sigma$, see \cref{eq:LengthInPerm}). For $r \in \RRR_{+}$ we have $\|r\| =
  2$ and for $r \in \RRR_{-}$ we have $\|r\| = 1$.

  Suppose that $\sigma$ is a positive cycle $\sigma = c \cdot
  \overline{c}$, where $c \cap \overline{c} = \emptyset$. Then $\|\sigma\| = 2
  \|c\|$, so that $\ell_{\RRR}(\sigma)=\|c\|$ implies that in the minimal,
  non-mixed factorization of $\sigma$ all the reflections are
  long and each such a factorization is of the form $r_i =
  \tau_i \cdot \overline{\tau_i}$, where $\tau_1\cdots \tau_k = c$ is a
  minimal factorization of $c$ as a product of transpositions.

   Suppose that $\sigma$ is a negative cycle $\sigma = c = \overline{c}$. Then
   $\|\sigma\|$ is odd, so that $\ell_{\RRR}(\sigma) \leq
   \frac{\|c\|+1}{2}$ and when the equality holds then either
   \[\ell_{\RRR_+}(\sigma)=\frac{\|c\|+1}{2} \text{ and }
     \ell_{\RRR_-}(\sigma)=0\]
   or
   \[\ell_{\RRR_+}(\sigma)=\frac{\|c\|-1}{2} \text{ and }
     \ell_{\RRR_-}(\sigma)=1.\]
   We claim that for the non-mixed
   factorization the first case is impossible. Indeed, if $\|c\|=1$,
   then $c$ is a short reflection. Suppose that this hypothesis holds for all
   the negative cycles $c$ such that $\|c\| < 2n+1$ and let $\|c\| =
   2n+1$. Let $r$ be a long reflection whose support is a subset
   of the support of $c$. These are the only reflections which appear
   in the minimal, non-mixed factorization of $c$. Note that $c \cdot r$
   is the disjoint product of two positive cycles $c_1,c_2$ (possibly
   fix-points) and one negative cycle $c'$ such that $\|c\| - 2 =
   \|c_1\|+\|c_2\|+\|c'\|$. Then, by induction, $\ell_{\RRR_-}(c) = 1$
   which finishes the proof.

   Finally, for $\sigma \in \CCC_{\rho^+,\rho^-}$ we will use the fact
   that the functions $\ell_{\RRR_+},\ell_{\RRR_-}$ are additive with
   respect to the decomposition into disjoint cycles, so that
   \[ \ell_{\RRR_+}(\sigma) =
     \|\rho_+\|+\|\rho_-\|, \qquad
     \ell_{\RRR_-}(\sigma) = \ell(\rho_-).\]
Therefore the formula \cref{eq:funkcjadlugosci} holds true and the signed reflection function is constant on the
conjugacy classes, so it is central.
  \end{proof}

  \subsubsection{Factorization lemma}

There is a natural embedding of $B(n-1)$ into $B(n)$, which implies
that there exists a canonical choice of the minimal, non-mixing
factorization \cref{eq:factor}. We will describe it using the
permutation model.
\begin{lemma}
\label{lem:Factorization}
We have the following factorization of the signed
reflection length:
\begin{equation}
\sum_{\sigma \in B(n)}\phi_{\q,\s}(\sigma)\cdot \sigma = \prod_{i=1}^n(1+\q\cdot J^+_i +
\s\cdot J^-_i),
\end{equation}
where 
\[J^+_i = \sum_{ j \in [\pm (i-1)]} (ji)(\overline{j} \overline{i}), \qquad J^{-}_i = (\overline{i}i).\]
\end{lemma}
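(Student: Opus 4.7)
The plan is to proceed by induction on $n$, exploiting the right coset decomposition of $B(n)$ with respect to $B(n-1)\leq B(n)$, where $B(n-1)$ is identified with the stabilizer of $\{n,\overline{n}\}$. The base case $n=1$ is immediate: $B(1)=\{e,(1\overline{1})\}$, $J^+_1=0$ (empty sum), $J^-_1=(1\overline{1})$, so both sides equal $1+\s\,(1\overline{1})$.

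For the inductive step I would first show that every $\sigma\in B(n)$ has a unique factorization $\sigma=\sigma'\,t$ with $\sigma'\in B(n-1)$ and
$$t\in T_n:=\{e\}\ \cup\ \bigl\{(jn)(\overline{j}\,\overline{n}):j\in[\pm(n-1)]\bigr\}\ \cup\ \{(n\overline{n})\}.$$
Since $|T_n|=2n=[B(n):B(n-1)]$ and the product of any two distinct elements of $T_n$ fails to fix $n$, the set $T_n$ constitutes a full system of right coset representatives.

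The heart of the proof is the multiplicativity
$$\phi_{\q,\s}(\sigma'\,t)=\phi_{\q,\s}(\sigma')\cdot\phi_{\q,\s}(t),\qquad \sigma'\in B(n-1),\ t\in T_n,$$
which I would verify by a case analysis based on the closed formula $\phi_{\q,\s}(\sigma)=\q^{\|\rho^+\|+\|\rho^-\|}\s^{\ell(\rho^-)}$ from \cref{prop:well-def}. The case $t=e$ is trivial; for $t=(n\overline{n})$ the product $\sigma'\,t$ acquires an extra part equal to $1$ in $\rho^-$, yielding the factor $\s=\phi_{\q,\s}(t)$. For $t=(jn)(\overline{j}\,\overline{n})$ with $j\in[\pm(n-1)]$, a direct computation in the permutation model gives three subcases depending on the cycle of $\sigma'$ containing $j$: if $j$ is a fixed point of $\sigma'$, a new pair of positive cycles corresponding to a part of size $2$ in $\rho^+$ is created; if $j$ lies in a positive cycle of $\sigma'$ corresponding to a part $k$ of $\rho^+$, this part is replaced by $k+1$; and if $j$ lies in a negative cycle corresponding to a part $k$ of $\rho^-$, this part is replaced by $k+1$. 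In each subcase $\|\rho^+\|+\|\rho^-\|$ grows by exactly one while $\ell(\rho^-)$ is unchanged, giving the factor $\q=\phi_{\q,\s}(t)$.

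Summing then gives
$$\sum_{\sigma\in B(n)}\phi_{\q,\s}(\sigma)\,\sigma=\left(\sum_{\sigma'\in B(n-1)}\phi_{\q,\s}(\sigma')\,\sigma'\right)\bigl(1+\q\,J^+_n+\s\,J^-_n\bigr),$$
and the inductive hypothesis closes the argument. The main obstacle is the negative-cycle subcase: one must check that inserting $n$ and $\overline{n}$ via right-multiplication by $(jn)(\overline{j}\,\overline{n})$ produces a single cycle still invariant under the bar involution, rather than splitting into two positive cycles. This amounts to explicitly tracing through the cycle $(j,a_2,\dots,a_k,\overline{j},\overline{a_2},\dots,\overline{a_k})$ of $\sigma'$ and verifying that $n$ and $\overline{n}$ end up inserted symmetrically at the natural places opposite to $j$ and $\overline{j}$.
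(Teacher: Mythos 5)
Your proposal is correct and follows essentially the same route as the paper's proof: both decompose $B(n)$ over cosets of $B(n-1)$ with representatives $\id$, $(jn)(\overline{j}\,\overline{n})$, $(\overline{n}n)$ (the paper cites this decomposition, you verify it by counting), and both establish the multiplicativity of $\phi_{\q,\s}$ by the same case analysis on whether $j$ is a fixed point, lies in a positive cycle, or lies in a negative cycle of the smaller permutation. The only cosmetic difference is that you multiply the representatives on the right while the paper multiplies them on the left, which is immaterial since $\phi_{\q,\s}$ is central.
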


\begin{proof}
It is known that there exist unique left coset representatives
$\{w(j)\colon j \in [\pm n]\}$ for $B(n)\backslash B(n-1)$ with minimal lengths 
given by 
 \begin{equation}
  w(j)= \begin{cases}\id &\mbox{for } j=n,
 \\(jn)(\overline{j} \overline{n}) &\mbox{for }  j \in  [\pm (n-1)],
 \\
(\overline{n}n)&\mbox{for } j=\overline{n}.
 \end{cases}\label{coset}
 \end{equation} 
Let $\sigma \in B(n-1)$ and consider three cases according to the Equation \eqref{coset}. 
Obviously, in the first situation  one
has $\phi_{\q,\s}(\id \cdot  \sigma) =  \phi_{\q,\s}(\sigma)$. Suppose that 
$ j \in  [\pm (n-1)]$. If $j$ belongs to a positive cycle $c$ of
$\sigma$ or $j$ is a fix-point of $\sigma$ and $c=\id$ then 
$(jn)(\overline{j} \overline{n})\cdot c$ is a positive cycle of $(jn)(\overline{j} \overline{n})\cdot \sigma$ of
length increased by one. Similarly, if $j$ belongs to a negative cycle $c$ of $\sigma$ then 
$(jn)(\overline{j} \overline{n})\cdot c$ is a negative cycle of $(jn)(\overline{j} \overline{n})\cdot \sigma$ of
length increased by one. In both cases 
\[ \phi_{\q,\s}((jn)(\overline{j} \overline{n})\cdot\sigma) = \q \phi_{\q,\s}(\sigma).\]
In the third situation we have  $\phi_{\q,\s}((\overline{n}n)\cdot \sigma) = \s\cdot \phi_{\q,\s}(\sigma)$, which finishes the proof.
\end{proof}

\begin{remark}
  \label{rem:Infinite}
  Note that this embedding gives rise to the ascending tower of groups:
  \[B(1)< B(2) < \dots,\]
  which allows to define the
infinite group $B(\infty)$ as the inductive limit of
this tower.
It is clear that the conjugacy classes of $B(\infty)$ are parametrized
by pairs of partitions $(\rho^+,\rho^-)$ such that all parts of
$\rho^+$ are greater or equal to two. In particular the signed
reflection length $\phi_{\q,\s}$ can be naturally extended to the
infinite hyperoctahedral group $B(\infty)$
by using the same definition as in equation \eqref{eq:funkcjadlugosci}  
for $\sigma \in \CCC_{\rho^+,\rho^-} \subset B({\infty})$. Thus,
$\phi_{\q,\s}$ is a central function on $B(\infty)$.
\end{remark}






\subsection{Another view on non-mixing presentations}

Note that, in case of a single positive cycle, any minimal presentation takes only long reflections
while, for a negative cycle, a minimal presentation consists of long reflections and one
short one. It is easy to see that a minimal presentation is non-mixing
if and only if it contains the minimal number of long reflections (among all minimal presentations). It can
be understood in the following way.

Consider a natural map $\varphi\colon B(n)\to A(n-1)$ between
the groups of signed and unsigned permutations (since we would like to present our results in a way that might be applied
to other Coxeter groups we use standard notation $A(n-1)$ for the
Coxeter group of type A and rank $n-1$ which is
isomorphic with the permutation group $\Sym{n}$. It shall not be
confused with the alternating group i.e. the group of even permutations). It sends reflections from $\RRR_+$ to the reflections $\RRR'$
of $A(n-1)$ and reflections from $\RRR_-$ to the identity.  Both groups carry their reflection
lengths $\ell_\RRR$ and $\ell_{\RRR'}$ respectively.  Then
$\ell_{\RRR_+}(\sigma)=\ell_{\RRR'}(\varphi(\sigma))$ and $\ell_{\RRR_-}(\sigma)=\ell_{\RRR}(\sigma)-\ell_{\RRR'}(\varphi(\sigma))$.

Let $(W,S)$ be an arbitrary
Coxeter system and assume that $S$ is partitioned as $S=S'\cup S''$ in such a way that $m_{s's''}$ is
even for all $s'\in S'$ and $s''\in S''$ (that is no element of $S'$
is conjugated to an element of $S''$).
Then there is a natural map $\varphi\colon W \to W'$ between the Coxeter
groups $W$ and $W'$, where $W'$ is the group generated by $S'$. This map sends $S'$ into itself and $S''$ into the identity.  Given $w\in W$ let $\ell_{\RRR}(w)$
define the length of $w$ with respect to the reflections of $W$ and $\ell_{\RRR'}(w)$ define the length
of $\varphi(w)$ with respect to the reflections of $W'$ (which are the same as reflections in $W$
that belong to $W_{S'}$ \cite[Corollary 1.4]{Gal2005}).

\begin{question}
Is it true that $\ell_{\RRR'}(w)$ equals the minimal number of reflections conjugated to elements of $S'$
among all minimal presentations of $w$ as a product of reflections?
\end{question}
 
\section{The signed reflection function and the representation theory of $B(\infty)$
}
\label{sec:main}

In this section we will present our main result which gives a complete
characterization of the parameters $\q,\s \in \C$ for which the signed
reflection function is positive definite on $B(\infty)$. Since
$\phi_{\q,\s}$ is central it means that we classify when the signed
reflection function is a character of $B(\infty)$. We also find the
corresponding points in the Thoma simplex of $B(\infty)$. Finally, we
provide an explicit construction of the representations which realize
the character $\phi_{\q,\s}$, generalizing the Schur-Weyl
construction.

\subsection{Characters}

A function $\phi\colon G \to \C$ is \emph{positive definite} if
for any number $k \in \Z_{> 0}$ and any $z_1,\dots,z_k \in
    \C$, $g_1,\dots,g_k \in G$ we have
    \[\sum_{i,j=1}^{k}z_i \overline{z_j}\phi(g_j^{-1}g_i) \geq 0.\]

Note that the representation theory of a finite group $G$ can be
described by central, positive definite
functions. \Cref{lem:PDcharacter} below is well known
(see for instance \cite{BorodinOlshanski2017} for the proof), and it gives a correspondence between
normalized central positive definite functions on $G$ and probability
measures on the space $\widehat{G}$ of its irreducible
representations. This is an analogy to the classical Bochner theorem which gives a correspondence between
normalized continuous positive definite functions on $\R$ and probability measures on $\R$.

\begin{lemma}
  \label{lem:PDcharacter}
Let $G$ be a finite group and $f:G\to \C$ be a central, normalized
function. The function $f$ is positive definite if and only if it is a
convex combination of normalized characters of irreducible
representations of $G$.
\end{lemma}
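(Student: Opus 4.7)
The plan is to use the classical representation theory of the finite group $G$ to reduce the claim to a statement about the positivity of the coefficients in the expansion of $f$ in the orthonormal basis of irreducible characters.

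The easy direction is essentially immediate. Each normalized irreducible character $\chi_\lambda/\dim\lambda$ of $G$ is central, takes value $1$ at the identity, and is positive definite (being the diagonal matrix coefficient $g \mapsto \frac{1}{\dim\lambda}\Tr \pi_\lambda(g)$ associated with the unitary representation $\pi_\lambda$). A convex combination of such functions inherits all three properties.

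For the nontrivial direction, I would first use Schur orthogonality: the irreducible characters $\{\chi_\lambda\}_{\lambda \in \widehat{G}}$ form an orthonormal basis of the space of class functions with respect to $\langle f_1,f_2\rangle = \frac{1}{|G|}\sum_{g}f_1(g)\overline{f_2(g)}$. Since $f$ is central I expand
\[
f = \sum_{\lambda \in \widehat{G}} c_\lambda \chi_\lambda, \qquad c_\lambda = \frac{1}{|G|}\sum_{g \in G}f(g)\overline{\chi_\lambda(g)},
\]
and rewrite this as $f = \sum_\lambda p_\lambda \, \chi_\lambda/\dim\lambda$ with $p_\lambda := c_\lambda \dim\lambda$. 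Evaluating at $g = e$ and using $\chi_\lambda(e) = \dim\lambda$ together with the normalization $f(e) = 1$ yields $\sum_\lambda p_\lambda = 1$.

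The main step is to show $p_\lambda \geq 0$, and this is where positive definiteness enters. I would translate the positive definite condition into the language of the group algebra: viewing $f$ as a linear functional $\widetilde{f}\colon \C[G] \to \C$ via $\widetilde{f}\bigl(\sum_g a_g g\bigr) = \sum_g a_g f(g)$, the positive definite condition is equivalent to $\widetilde{f}(a^\ast a) \geq 0$ for every $a \in \C[G]$. Apply this to the central idempotent $e_\lambda = \frac{\dim\lambda}{|G|}\sum_{g \in G}\overline{\chi_\lambda(g)}\, g$, which satisfies $e_\lambda^\ast = e_\lambda$ and $e_\lambda^2 = e_\lambda$. Then
\[
0 \leq \widetilde{f}(e_\lambda^\ast e_\lambda) = \widetilde{f}(e_\lambda) = \frac{\dim\lambda}{|G|}\sum_{g \in G}\overline{\chi_\lambda(g)} f(g) = \dim\lambda \cdot c_\lambda = p_\lambda,
\]
completing the proof.

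There is no real obstacle here since the statement is classical; the whole content is the identification of central positive definite functions with positive traces on the semisimple group algebra $\C[G] \cong \bigoplus_\lambda \End(V_\lambda)$, under which the normalized irreducible characters correspond precisely to the normalized traces on the individual matrix block summands. Any potential subtlety (for instance verifying that $e_\lambda$ is indeed a self-adjoint idempotent, or tracking factors of $\dim\lambda$) is routine computation using Schur orthogonality.
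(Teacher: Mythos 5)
Your proof is correct and complete. Note that the paper does not actually prove \cref{lem:PDcharacter}: it records the lemma as well known and refers the reader to \cite{BorodinOlshanski2017}, so there is no in-paper argument to compare against; what you wrote is the standard self-contained proof that such a reference contains. The easy direction is right because traces of unitary representations are positive definite: with $A=\sum_i z_i\pi_\lambda(g_i)$ one has
\begin{equation*}
\sum_{i,j} z_i\overline{z_j}\,\chi_\lambda(g_j^{-1}g_i)=\Tr(A^\ast A)\geq 0,
\end{equation*}
and convex combinations preserve positivity, centrality and normalization. The hard direction correctly combines Schur orthogonality (valid since $f$ is central, hence a class function) with evaluation of the positive functional $\widetilde{f}$ on the central idempotents $e_\lambda$; the needed facts $e_\lambda^\ast=e_\lambda$ (from $\chi_\lambda(g^{-1})=\overline{\chi_\lambda(g)}$ for unitary representations) and $e_\lambda^2=e_\lambda$ are standard, and the identity $\widetilde{f}(e_\lambda)=\dim\lambda\cdot c_\lambda$ together with $f(e)=1$ gives nonnegative coefficients summing to $1$. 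One small imprecision: the normalized character is not a single diagonal matrix coefficient but an average of $\dim\lambda$ of them; this does not affect anything, since positive definiteness of the trace follows directly from the displayed computation.
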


When $G$ is infinite its irreducible representations might be infinite
dimensional therefore the definition of characters as traces do not apply
here. Motivated by \cref{lem:PDcharacter} it turns out that the
definition of characters which plays the essential role in the
representation theory of infinite groups should be modified as
follows.

\begin{definition}
  \label{def:character}
Let $G$ be an arbitrary group. A \emph{character} $\phi\colon G \to
\C$ is a central, positive-definite function which takes value $1$ on the identity.
\end{definition}

Our main theorem
gives a classification of the values $\q,\s$ for which the signed
reflection function $\phi_{\q,\s}\colon B(\infty) \to \C$ is a
character of $B(\infty)$.

\begin{theorem}
 \label{theo:PositiveDefinite}
 Let $\q,\s \in \C$. The following conditions are equivalent:
  \begin{enumerate}[label=(\roman*), ref=\roman*]
  \item The signed reflection function $\phi_{\q,\s}\colon B(\infty) \to \C$ is
    positive definite on $B(\infty)$; \label{i}
 \item The signed reflection function $\phi_{\q,\s}\colon B(\infty) \to \C$ is
    a character of $B(\infty)$; \label{ii}
        \item $\q = \frac{\epsilon}{M+N},
    \s=\frac{M-N}{M+N}$ for $M,N \in \N, M+N\neq 0$, $\epsilon \in
    \{1,-1\}$ or $\q=0, -1 \leq \s \leq 1$. \label{iii}
  \end{enumerate}
     Moreover, when these conditions are satisfied and $\q\neq 0$, the signed
    reflection function restricted to $B(n)$ is a normalized character of
    the explicitly constructed unitary representation, generalizing
    the Schur-Weyl construction.
\end{theorem}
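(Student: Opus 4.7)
The equivalence $(\mathrm{i}) \Leftrightarrow (\mathrm{ii})$ is essentially formal: by \cref{prop:well-def} the function $\phi_{\q,\s}$ is central on $B(\infty)$, and $\phi_{\q,\s}(\id) = 1$ is immediate from the formula \eqref{eq:funkcjadlugosci}, so positive definiteness is literally the definition of being a character in the sense of \cref{def:character}. The substance of the theorem is therefore the equivalence $(\mathrm{i}) \Leftrightarrow (\mathrm{iii})$, which I would split into a constructive direction $(\mathrm{iii}) \Rightarrow (\mathrm{ii})$ and the main structural direction $(\mathrm{i}) \Rightarrow (\mathrm{iii})$.

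For the construction, I would first treat the lattice case $\q = \epsilon/(M+N)$, $\s=(M-N)/(M+N)$. Take $V = V^+ \oplus V^-$ with $\dim V^\pm = M,N$ and let $J$ be the involution equal to $+\id$ on $V^+$ and $-\id$ on $V^-$. In the signed model define a unitary action of $B(n)$ on $V^{\otimes n}$ by which $(g_1,\dots,g_n;\pi)$ permutes the tensor factors according to $\pi$ while each index $i$ with $g_i = -1$ additionally applies $J$ to the corresponding factor. A cycle-by-cycle trace computation shows that each positive cycle of $\sigma$ contributes the factor $\mathrm{tr}(\id_V) = M+N$ and each negative cycle contributes $\mathrm{tr}(J) = M-N$; dividing by $\dim V^{\otimes n} = (M+N)^n$ and using $|\rho^+|+|\rho^-|=n$ together with $\ell(\rho^+)-n = -\|\rho^+\|-|\rho^-|$ recovers exactly $\phi_{1/(M+N),(M-N)/(M+N)}$ on $\sigma \in \CCC_{\rho^+,\rho^-}$. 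The sign case $\epsilon = -1$ is obtained by tensoring with the one-dimensional character $\sigma\mapsto \sgn(\pi_\sigma)$ of $B(n)$, which equals $-1$ on long reflections and $+1$ on short ones, and thus multiplies $\phi_{\q,\s}$ by $(-1)^{\|\rho^+\|+\|\rho^-\|} = (-1)^{\ell_{\RRR_+}(\sigma)+\ell_{\RRR_-}(\sigma)-\ell(\rho^-)}$ in the desired way. Since normalized characters of unitary representations are positive definite, $(\mathrm{i})$ follows. The degenerate line $\q=0$, $\s\in[-1,1]$ is then obtained by pointwise approximation: the values $(M-N)/(M+N)$ are dense in $[-1,1]$, and letting $M+N\to\infty$ along a sequence with $(M-N)/(M+N)\to\s$ gives $\phi_{1/(M+N),(M-N)/(M+N)}\to\phi_{0,\s}$ pointwise on $B(\infty)$, while pointwise limits of positive definite functions remain positive definite.

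For the hard direction $(\mathrm{i}) \Rightarrow (\mathrm{iii})$ I would use the Frobenius formula for $B(n) = \Z_2\wr\Sym{n}$, whose irreducible representations are labelled by bi-partitions $(\la^+,\la^-)$ with $|\la^+|+|\la^-|=n$. A plethystic manipulation combined with the standard Cauchy identity gives an expansion
\[\phi_{\q,\s}|_{B(n)} = \sum_{(\la^+,\la^-)} c_{(\la^+,\la^-)}(\q,\s)\cdot \tilde\chi^{(\la^+,\la^-)}\]
in which each coefficient $c_{(\la^+,\la^-)}$ is, up to a strictly positive combinatorial factor, a product $s_{\la^+}(A)\cdot s_{\la^-}(B)$ of Schur polynomials evaluated at two explicit specializations $A,B$ that depend linearly on $\q$ and $\s$. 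Positive definiteness of $\phi_{\q,\s}|_{B(n)}$ is then equivalent to non-negativity of every $c_{(\la^+,\la^-)}$, and coherence across all $n$ (i.e., positive definiteness on $B(\infty)$) forces these Schur evaluations to be non-negative for every bi-partition simultaneously.

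The main obstacle is classifying precisely which specializations produce uniformly non-negative Schur values. This is controlled by the Aissen–Schoenberg–Whitney–Edrei theorem on totally positive sequences, equivalent to Thoma's description of the extreme characters of $\Sym{\infty}$: the admissible specializations are exactly those coming from points of the Thoma simplex. Because the family $\phi_{\q,\s}$ depends on only two scalars, the Thoma data extracted from $A$ and $B$ must be of rigid atomic form — supported on a single atom with fixed integer multiplicity, possibly with a sign — and working out this arithmetic constraint pins $(\q,\s)$ to exactly the lattice $\{(\epsilon/(M+N),(M-N)/(M+N)) : M,N\in\N,\ M+N\neq 0,\ \epsilon\in\{\pm 1\}\}$ together with the degenerate boundary $\{0\}\times[-1,1]$, which gives $(\mathrm{iii})$. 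The ``moreover'' clause is then immediate from the construction in the second paragraph, which already realizes the Schur–Weyl-type representation whose normalized character is $\phi_{\q,\s}$.
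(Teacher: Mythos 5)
Your proposal is correct, and its skeleton coincides with the paper's own proof: the equivalence of (\ref{i}) and (\ref{ii}) is formal; the hard direction runs through Poirier's Frobenius formula (\cref{lem:FrobeniusTypeB}) to expand $\phi_{\q,\s}|_{B(n)}$ into irreducible characters and reduces positive definiteness to non-negativity of the coefficients via \cref{lem:PDcharacter}; and your explicit representation is exactly the paper's one for the ``moreover'' clause --- the paper encodes your tensoring-by-$\sgn$ trick as the prefactor $\epsilon^{\|\rho^+\|+\|\rho^-\|}$ in the definition of the action, which is the same thing since $(-1)^{\|\rho^+\|+\|\rho^-\|}$ is the sign of the underlying permutation. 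You diverge in three places of execution. First, the paper derives the coefficient formula \eqref{eq:rozklad} by evaluating the Frobenius formula at $\xx=1^M$, $\yy=1^N$, noting that both sides of \eqref{eq:PolynomialIdentity} are polynomials in $(M,N)$ agreeing on the grid $\Z_{>0}\times\Z_{>0}$, and then substituting real $M=\tfrac{1+\s}{2\q}$, $N=\tfrac{1-\s}{2\q}$; you instead plug in the virtual specializations with $p_k(A)=\tfrac{1+\s}{2}\q^{k-1}$ and $p_k(B)=\tfrac{1-\s}{2}\q^{k-1}$ directly (your mention of the Cauchy identity is superfluous --- the Frobenius formula alone does the job), and both routes are valid. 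Second, and most notably, the classification step that the paper compresses into the single unproved sentence ``This condition is satisfied only for $\q,\s$ as in our hypothesis'' is the step you actually argue: by Aissen--Schoenberg--Whitney--Edrei/Thoma, Schur-positivity of $A$ means total positivity of $H_A(z)=(1-\q z)^{-(1+\s)/(2\q)}$, which forces the exponent to be a non-negative integer (with the mirror statement for $\q<0$), pinning down the lattice. This is heavier machinery than necessary --- the content products in \eqref{eq:rozklad} can be analyzed elementarily by testing single rows and columns, where a non-integer value of $\tfrac{1\pm\s}{2\q}$ produces a product with exactly one negative factor --- but it is a complete argument for precisely the step the paper glosses over. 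Third, you handle the degenerate line $\q=0$, $-1\leq\s\leq 1$ by pointwise limits of positive definite functions along $M+N\to\infty$; the paper gets this case for free because \eqref{eq:rozklad} is polynomial in $(\q,\s)$ and hence valid at $\q=0$. Both mechanisms are sound, so the proposal stands as a correct variant of the paper's argument that, if anything, is more self-contained at its crux.
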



\begin{figure}[h]
\begin{center}
\begin{tikzpicture}[
        scale=0.7,
        axis/.style={very thick, ->, >=stealth'},
        important line/.style={thick},
        dashed line/.style={dashed, thick},
        every node/.style={color=black,}
     ]
     \begin{scope}[yscale=1.5]
       \coordinate (beg_1) at (0,-1);
    \coordinate (beg_2) at (0,1);
    \coordinate (dev_1) at (1,-.6);
    \coordinate (xint) at (2,-0.33);
     \coordinate (xint2) at (1.5,-0.6);
     \coordinate (xint3) at (2.5,-0.1);
    \coordinate (beg_3) at (3,-.142);
   \coordinate (beg_4) at (4,0);
    \coordinate (beg_5) at (5,0.111);
      \coordinate (beg_6) at (6,0.2);
        \coordinate (beg_7) at (7,0.27);
          \coordinate (beg_8) at (8,0.33);
           \coordinate (kropki) at (2,0);
    \draw[axis] (0,0)  -- (8.5,0) node(xline)[right] {$M$};
    \draw[axis] (0,-1) -- (0,1.7) node(yline)[above] {$\mathit{\s}$};
     \draw[dashed] (0,-1)  -- (8.5,-1) node(xline)[right] {};
      \draw[dashed] (0,1)  -- (8.5,1) node(xline)[right] {};
    \fill[red] (beg_1) circle (2pt) node[left] {$-1$};
     \fill[red] (beg_2) circle (0pt) node[left] {$1$};
       \fill[red] (dev_1) circle (2pt) node[left] {};
    \fill[red] (xint) circle (2pt) node[above left] {};
     \fill[red] (beg_3) circle (2pt) node[left] {};
      \fill[red] (beg_4) circle (2pt) node[left] {};
           \fill[red] (beg_5) circle (2pt) node[above] {};
              \fill[red] (beg_6) circle (2pt) node[above] {};
                    \fill[red] (beg_7) circle (2pt) node[above] {};
                       \fill[red] (beg_8) circle (2pt) node[above] {};
    \draw[thick] (1,-1.5pt) -- (1,1.5pt) node[below] {};
     \draw[thick] (2,-1.5pt) -- (2,1.5pt) node[below] {};
      \draw[thick] (3,-1.5pt) -- (3,1.5pt) node[below] {};
          \draw[thick] (4,-1.5pt) -- (4,1.5pt) node[below] {$N$};
          \draw[thick] (5,-1.5pt) -- (5,1.5pt) node[below] {};
             \draw[thick] (6,-1.5pt) -- (6,1.5pt) node[below] {};
                \draw[thick] (7,-1.5pt) -- (7,1.5pt) node[below] {};
                   \draw[thick] (8,-1.5pt) -- (8,1.5pt) node[below] {};
                 \end{scope}
                 \begin{scope}[yscale=1.5, yshift=4cm]
\coordinate (beg_1) at (0,1/4);
    \coordinate (beg_2) at (0,1);
    \coordinate (dev_1) at (1,0.2);
    \coordinate (xint) at (2,1/6);
     \coordinate (xint2) at (1.5,-0.6);
     \coordinate (xint3) at (2.5,-0.1);
    \coordinate (beg_3) at (3,1/7);
   \coordinate (beg_4) at (4,1/8);
    \coordinate (beg_5) at (5,1/9);
      \coordinate (beg_6) at (6,1/10);
      \coordinate (beg_7) at (7,1/11);
          \coordinate (beg_8) at (8,1/12);
           \coordinate (kropki) at (2,0);
    \draw[axis] (0,0)  -- (8.5,0) node(xline)[right] {$M$};
    \draw[axis] (0,-1) -- (0,1.7) node(yline)[above] {$\mathit{\q}$};
     \draw[dashed] (0,-1)  -- (8.5,-1) node(xline)[right] {};
      \draw[dashed] (0,1)  -- (8.5,1) node(xline)[right] {};
      \fill[red] (0,-1) circle (0pt) node[left] {$-1$};
    \fill[red] (beg_1) circle (2pt) node[left] {};
     \fill[red] (beg_2) circle (0pt) node[left] {$1$};
       \fill[red] (dev_1) circle (2pt) node[left] {};
    \fill[red] (xint) circle (2pt) node[above left] {};
     \fill[red] (beg_3) circle (2pt) node[left] {};
      \fill[red] (beg_4) circle (2pt) node[left] {};
           \fill[red] (beg_5) circle (2pt) node[above] {};
              \fill[red] (beg_6) circle (2pt) node[above] {};
                    \fill[red] (beg_7) circle (2pt) node[above] {};
                       \fill[red] (beg_8) circle (2pt) node[above] {};
    \draw[thick] (1,-1.5pt) -- (1,1.5pt) node[below] {};
     \draw[thick] (2,-1.5pt) -- (2,1.5pt) node[below] {};
      \draw[thick] (3,-1.5pt) -- (3,1.5pt) node[below] {};
          \draw[thick] (4,-1.5pt) -- (4,1.5pt) node[below] {$N$};
          \draw[thick] (5,-1.5pt) -- (5,1.5pt) node[below] {};
             \draw[thick] (6,-1.5pt) -- (6,1.5pt) node[below] {};
                \draw[thick] (7,-1.5pt) -- (7,1.5pt) node[below] {};
                   \draw[thick] (8,-1.5pt) -- (8,1.5pt) node[below] {};
                   \end{scope}
\end{tikzpicture}
\begin{tikzpicture}[
        scale=0.7,
        axis/.style={very thick, ->, >=stealth'},
        important line/.style={thick},
        dashed line/.style={dashed, thick},
        every node/.style={color=black,}
     ]
          \begin{scope}[yscale=1.5]
    \coordinate (beg_1) at (0,1);
    \coordinate (beg_2) at (0,-1);
    \coordinate (dev_1) at (1,1/3);
    \coordinate (xint) at (2,0);
     \coordinate (xint2) at (1.5,0.6);
     \coordinate (xint3) at (2.5,0.1);
    \coordinate (beg_3) at (3,-1/5);
   \coordinate (beg_4) at (4,-2/6);
    \coordinate (beg_5) at (5,-3/7);
      \coordinate (beg_6) at (6,-4/8);
        \coordinate (beg_7) at (7,-5/9);
          \coordinate (beg_8) at (8,-6/10);
           \coordinate (kropki) at (2,0);
    \draw[axis] (0,0)  -- (8.5,0) node(xline)[right] {$N$};
    \draw[axis] (0,-1) -- (0,1.7) node(yline)[above] {$\mathit{\s}$};
     \draw[dashed] (0,-1)  -- (8.5,-1) node(xline)[right] {};
      \draw[dashed] (0,1)  -- (8.5,1) node(xline)[right] {};
    \fill[red] (beg_1) circle (2pt) node[left] {$1$};
     \fill[red] (beg_2) circle (0pt) node[left] {$-1$};
       \fill[red] (dev_1) circle (2pt) node[left] {};
    \fill[red] (xint) circle (2pt) node[above left] {};
     \fill[red] (beg_3) circle (2pt) node[left] {};
      \fill[red] (beg_4) circle (2pt) node[left] {};
           \fill[red] (beg_5) circle (2pt) node[above] {};
              \fill[red] (beg_6) circle (2pt) node[above] {};
                    \fill[red] (beg_7) circle (2pt) node[above] {};
                       \fill[red] (beg_8) circle (2pt) node[above] {};
    \draw[thick] (1,-1.5pt) -- (1,1.5pt) node[below] {};
     \draw[thick] (2,-1.5pt) -- (2,1.5pt) node[above] {$M$};
        \draw[thick] (3,-1.5pt) -- (3,1.5pt) node[below] {};
          \draw[thick] (4,-1.5pt) -- (4,1.5pt) node[below] {};
          \draw[thick] (5,-1.5pt) -- (5,1.5pt) node[below] {};
             \draw[thick] (6,-1.5pt) -- (6,1.5pt) node[below] {};
                \draw[thick] (7,-1.5pt) -- (7,1.5pt) node[below] {};
                   \draw[thick] (8,-1.5pt) -- (8,1.5pt) node[below] {};
                 \end{scope}
                 \begin{scope}[yscale=1.5, yshift=4cm]
    \coordinate (beg_1) at (0,1);
    \coordinate (beg_2) at (0,-1/2);
    \coordinate (dev_1) at (1,-1/3);
    \coordinate (xint) at (2,-1/4);
     \coordinate (xint2) at (1.5,0.6);
     \coordinate (xint3) at (2.5,0.1);
    \coordinate (beg_3) at (3,-1/5);
   \coordinate (beg_4) at (4,-1/6);
    \coordinate (beg_5) at (5,-1/7);
      \coordinate (beg_6) at (6,-1/8);
        \coordinate (beg_7) at (7,-1/9);
          \coordinate (beg_8) at (8,-1/10);
           \coordinate (kropki) at (2,0);
\draw[axis] (0,0)  -- (8.5,0) node(xline)[right] {$N$};
    \draw[axis] (0,-1) -- (0,1.7) node(yline)[above] {$\mathit{\q}$};
     \draw[dashed] (0,-1)  -- (8.5,-1) node(xline)[right] {};
      \draw[dashed] (0,1)  -- (8.5,1) node(xline)[right] {};
      \fill[red] (0,-1) circle (0pt) node[left] {$-1$};
    \fill[red] (beg_2) circle (2pt) node[left] {};
     \fill[red] (beg_1) circle (0pt) node[left] {$1$};
       \fill[red] (dev_1) circle (2pt) node[left] {};
    \fill[red] (xint) circle (2pt) node[above left] {};
     \fill[red] (beg_3) circle (2pt) node[left] {};
      \fill[red] (beg_4) circle (2pt) node[left] {};
           \fill[red] (beg_5) circle (2pt) node[above] {};
              \fill[red] (beg_6) circle (2pt) node[above] {};
                    \fill[red] (beg_7) circle (2pt) node[above] {};
                       \fill[red] (beg_8) circle (2pt) node[above] {};
    \draw[thick] (1,-1.5pt) -- (1,1.5pt) node[below] {};
     \draw[thick] (2,-1.5pt) -- (2,1.5pt) node[above] {$M$};
      \draw[thick] (3,-1.5pt) -- (3,1.5pt) node[below] {};
          \draw[thick] (4,-1.5pt) -- (4,1.5pt) node[below] {};
          \draw[thick] (5,-1.5pt) -- (5,1.5pt) node[below] {};
             \draw[thick] (6,-1.5pt) -- (6,1.5pt) node[below] {};
                \draw[thick] (7,-1.5pt) -- (7,1.5pt) node[below] {};
                   \draw[thick] (8,-1.5pt) -- (8,1.5pt) node[below] {};
                   \end{scope}
\end{tikzpicture}
\caption{Relation between $\q,\s$ and $M$, $N$. In the left hand side
  we fix $\epsilon=1,N=4$ and we look at the evolution of $M$ and in
  the right hand side we fix $\epsilon=-1,M=2$ and we look at the evolution of $N$.}
\label{fig:ms}
\end{center}
\end{figure}
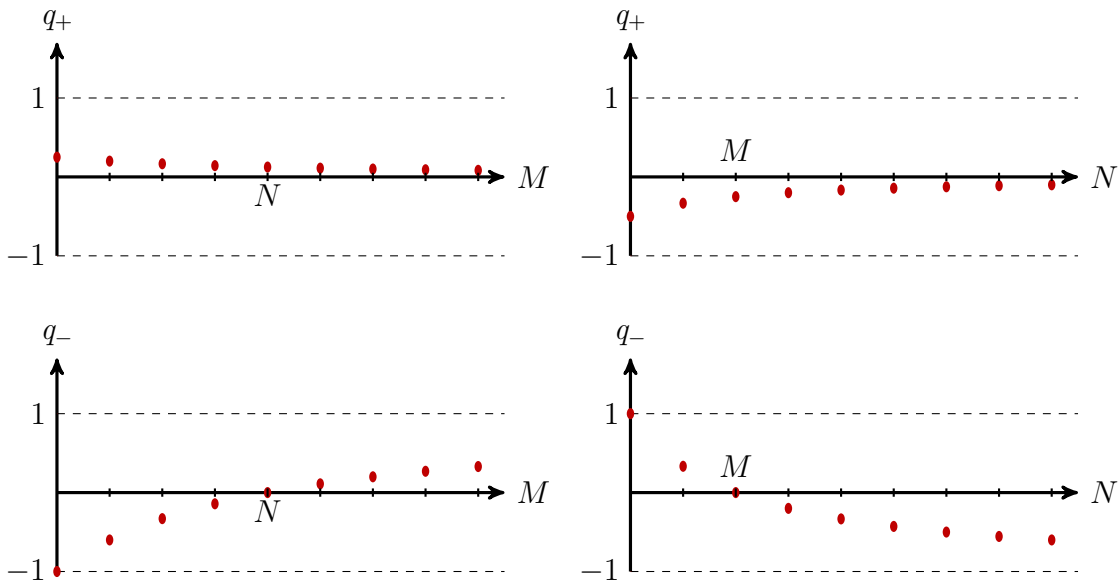

Before we prove our main theorem we quickly recall some information
about the representation
theory of the hyperoctahedral group $B(n)$; this information will be a necessary tool
in our proof of \cref{theo:PositiveDefinite}.

\subsection{Representation theory of the hyperoctahedral group}

The irreducible representations
of $B(n)$ are parametrized by pairs of partitions
$(\lambda^+,\lambda^-)$ such that $|\lambda^+|+|\lambda^-|=n$. We
denote by $\chi_{(\lambda^+,\lambda^-)}$ the trace of the
irreducible representation $\rho_{(\lambda^+,\lambda^-)} \in
  \widehat{B(n)}$ associated with the pair
  $(\lambda^+,\lambda^-)$. The irreducible characters $\chi_{(\lambda^+,\lambda^-)}$ are central on $B(n)$ and we denote by
$\chi_{(\lambda^+,\lambda^-)}(\rho^+,\rho^-)$ the value of
$\chi_{(\lambda^+,\lambda^-)}$ on the conjugacy class $\CCC_{\rho^+,\rho^-} \subset B(n)$. Here, we use the
convention that the conjugacy classes of $B(n)$ are parametrized by
pairs of partitions of total size $n$, so that some parts of $\rho^+$
might be equal to $1$ (see \cref{rem:ConjCl}).

Following \cite{Poirier1998} we
are going to relate the representation theory of $B(n)$ with the theory of symmetric
functions. We denote by $\xx$ ($\yy$ respectively) the infinite alphabet
$x_1,x_2\dots$ ($y_1,y_2\dots$). Let $\epsilon \in \{+,-\}$ and let
\[p_k^\epsilon(\xx,\yy) := \sum_{i\geq
    1}x_i^k+\epsilon \sum_{i\geq 1}y_i^k.\]
Finally, for a partition $\rho$ we define
\[p_\rho^\epsilon(\xx,\yy) :=
  \prod_{i=1}^{\ell(\rho)}p_{\rho_i}^\epsilon(\xx,\yy)\]
and for a pair of partitions $(\rho^+,\rho^-)$ we set
\[ p_{(\rho^+,\rho^-)}(\xx,\yy) := p^+_{\rho^+}(\xx,\yy)p^-_{\rho^-}(\xx,\yy).\]
Let $s_\lambda(\xx)$
denote the Schur symmetric function in the infinite alphabet $\xx =
(x_1,x_2,\dots)$.
We use the following Frobenius formula  for the direct calculation of
$  p_{(\rho^+,\rho^-)}(\xx,\yy)$ in terms of irreducible characters $\chi_{(\lambda^+,\lambda^-)}$.
\begin{lemma}[Frobenius formula, \cite{Poirier1998}]
  \label{lem:FrobeniusTypeB}
  For any pair of partitions $(\rho^+,\rho^-)$ of
  total size equal to $n$ we have the following equality:
  \begin{equation}
    \label{eq:FrobeniusTypeB}
    p_{(\rho^+,\rho^-)}(\xx,\yy)  = \sum_{(\lambda^+,\lambda^-) \in \hat{B}(n)}\chi_{(\lambda^+,\lambda^-)}(\rho^+,\rho^-) s_{\lambda^+}(\xx) s_{\lambda^-}(\yy).
    \end{equation}
  \end{lemma}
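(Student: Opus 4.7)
The plan is to apply the Frobenius characteristic theory for wreath products specialized to $B(n) = \Z_2 \wr \Sym{n}$, in which one alphabet of variables is attached to each irreducible character of $\Z_2$. Since $\Z_2$ has two irreducibles (trivial and sign), the natural target is $\Lambda(\xx) \otimes \Lambda(\yy)$; positive cycles, being insensitive to the $\Z_2$-factor, generate $p_k^+(\xx,\yy)$, while negative cycles, weighted by the sign character, generate $p_k^-(\xx,\yy)$. The lemma is then equivalent to the statement that a certain characteristic map sends the irreducible character $\chi_{(\lambda^+,\lambda^-)}$ to $s_{\lambda^+}(\xx) s_{\lambda^-}(\yy)$.

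My argument would proceed in three steps. First, I would introduce the characteristic map sending a class function $f$ on $B(n)$ to
\[ \text{ch}(f) := \sum_{(\rho^+,\rho^-)} \frac{f(\CCC_{\rho^+,\rho^-})}{z_{(\rho^+,\rho^-)}} \, p_{(\rho^+,\rho^-)}(\xx, \yy), \]
where $z_{(\rho^+,\rho^-)} = |B(n)|/|\CCC_{\rho^+,\rho^-}|$ is the centralizer order. A direct computation of the sizes of the conjugacy classes of $B(n)$ shows that ch is an isometric isomorphism onto the total-degree-$n$ part of $\Lambda(\xx) \otimes \Lambda(\yy)$ equipped with the standard Hall inner product, under which the $p_{(\rho^+,\rho^-)}$ are orthogonal with squared norms $z_{(\rho^+,\rho^-)}$.

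Second, I would realize $\rho_{(\lambda^+,\lambda^-)}$ explicitly as an induced representation. Set $k = |\lambda^+|$; inflate the Specht module $V_{\lambda^+}$ from $\Sym{k}$ to $B(k)$ by letting $\Z_2^k$ act trivially, and inflate $V_{\lambda^-}$ from $\Sym{n-k}$ to $B(n-k)$ twisted by the sign character $\epsilon\colon \Z_2^{n-k} \to \{\pm 1\}$. Then $\rho_{(\lambda^+,\lambda^-)} = \operatorname{Ind}_{B(k) \times B(n-k)}^{B(n)}\bigl(\widetilde{V}_{\lambda^+} \boxtimes \widetilde{V}_{\lambda^-}\bigr)$. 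Applying the induced character formula, splitting each positive and negative cycle of $(\rho^+,\rho^-)$ across the two factors, and invoking the classical type-A Frobenius formula on each factor, one obtains the identification $\text{ch}(\chi_{(\lambda^+,\lambda^-)}) = s_{\lambda^+}(\xx) s_{\lambda^-}(\yy)$.

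Finally, having identified the images of all irreducible characters under ch, one inverts the map: since $\{s_{\lambda^+}(\xx) s_{\lambda^-}(\yy)\}$ forms an orthonormal basis for the target under the Hall inner product, pairing against it gives precisely the expansion claimed in~\eqref{eq:FrobeniusTypeB}. The main obstacle lies in the combinatorial bookkeeping of step two: the induced character formula requires summing over all distributions of the cycles of $(\rho^+,\rho^-)$ between $B(k)$ and $B(n-k)$, and one must correctly track how the twist $\epsilon$ converts the naive contribution $\sum_i x_i^k + \sum_i y_i^k$ of a negative cycle into $\sum_i x_i^k - \sum_i y_i^k$; this sign flip is exactly what justifies the two different power sums $p_k^+$ and $p_k^-$ in the definition of $p_{(\rho^+,\rho^-)}$, and once this bookkeeping is set up the result reduces to two independent applications of the type-A Frobenius formula.
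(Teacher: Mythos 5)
Your proposal is correct, but note that the paper itself does not prove this lemma at all: it is stated as a quotation from Poirier's work, and the authors use it as a black box. What you have written is essentially the standard proof that the cited reference (and, e.g., Macdonald's book, Chapter~I, Appendix~B, for general wreath products) contains: the characteristic map for $\Z_2\wr\Sym{n}$ with one alphabet per irreducible of $\Z_2$, the realization of $\rho_{(\lambda^+,\lambda^-)}$ as $\operatorname{Ind}_{B(k)\times B(n-k)}^{B(n)}$ of the two inflated (and, on the second factor, sign-twisted) Specht modules via the Wigner--Mackey little-group construction, and the inversion against the orthonormal basis $\{s_{\lambda^+}(\xx)s_{\lambda^-}(\yy)\}$. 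The key bookkeeping points you flag are exactly the right ones and they do work out: a cycle of the group element must be assigned in its entirety to one of the two factors of $B(k)\times B(n-k)$, a positive cycle of length $k$ contributes $p_k(\xx)+p_k(\yy)=p_k^+$ because the $\Z_2$-product along it is $+1$ in both factors, while a negative cycle contributes $p_k(\xx)-p_k(\yy)=p_k^-$ because the sign twist on the second factor evaluates to $-1$; moreover the orthogonality constants match, since $\langle p_k^{\epsilon},p_k^{\epsilon}\rangle = 2k$ gives $\langle p_{(\rho^+,\rho^-)},p_{(\rho^+,\rho^-)}\rangle = z_{\rho^+}2^{\ell(\rho^+)}z_{\rho^-}2^{\ell(\rho^-)}$, which is precisely the centralizer order in $B(n)$, so your map $\mathrm{ch}$ is indeed an isometry. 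Two routine points you should make explicit if you write this up in full: that the induced modules are irreducible and pairwise inequivalent (Clifford/Mackey theory), so they exhaust $\widehat{B}(n)$ and fix the labeling convention; and that characters of $B(n)$ are real-valued, so no complex conjugates appear when you invert $\mathrm{ch}$ on the indicator functions of conjugacy classes. With those remarks your argument is a complete and correct proof of the lemma, supplying what the paper outsources to the literature.
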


\subsection{Proof of the main result }

We are ready to prove \cref{theo:PositiveDefinite}.

\begin{proof}[Proof of \cref{theo:PositiveDefinite}]

The signed reflection function is normalized $\phi_{\q\s}(\id) = 1$, therefore it is clear from \cref{def:character} and
from \cref{rem:Infinite} that the conditions \eqref{i} and \eqref{ii} are equivalent.

We will now prove that \eqref{i} $\Leftrightarrow$ \eqref{iii}. The signed reflection length $\phi_{\q,\s}$ is
positive definite on $B(\infty)$ if and only if it is positive
definite on $B(n)$ for each $n$. Fix two positive integers $N,M \in \Z_{>0}$, set
\[x_i = \begin{cases}1 &\text{ if }
  i\leq M,\\0 &\text{ otherwise;}\end{cases}\qquad y_i = \begin{cases}1 &\text{ if }
  i\leq N,\\0 &\text{ otherwise;}\end{cases}\]
and plug these families $(\xx,\yy)$ into \eqref{eq:FrobeniusTypeB}. We
obtain the following equality
\begin{align*}
  (N+M)^{n-\|\rho^+\|-|\rho^-|}(M-N)^{\ell(\rho^-)} &=\\
  \sum_{(\lambda^+,\lambda^-) \in
    \widehat{B}(n)}\chi_{(\lambda^+,\lambda^-)}(\rho^+,\rho^-)\cdot
  &\bigg(s_{\lambda^+}(\underbrace{1,\dots,1}_M,0\dots) s_{\lambda^-}(\underbrace{1,\dots,1}_N,0\dots)\bigg).
  \end{align*}
We use the classical content-formula for Schur polynomials (see for
instance \cite{Stanley:EC2})
\[ s_{\lambda}(1^N) = \prod_{\square
    \in \lambda}(N+c(\square)),\]
where $c(\square) := x-y$ is the content of the box $\square = (x,y)$.
Plugging it into
the previous identity we obtain
\begin{align}
  (N+M)^{n-\|\rho^+\|-|\rho^-|} &(M-N)^{\ell(\rho^-)} = \nonumber \\
  \sum_{(\lambda^+,\lambda^-) \in
    \widehat{B}(n)}\chi_{(\lambda^+,\lambda^-)}(\rho^+,\rho^-)\prod_{\square
  \in \lambda^+}&(M+c(\square)) \prod_{\square
                  \in \lambda^-}(N+c(\square)).
                    \label{eq:PolynomialIdentity}
\end{align}
Note that the left and the right hand sides of
\eqref{eq:PolynomialIdentity} are both polynomials in $N,M$
which are equal on the grid $\Z_{>0}\times \Z_{>0}$. Therefore equality
\eqref{eq:PolynomialIdentity} holds true for any real parameters $N,M
\in \RR$. Using the following change of variables $\q^{-1} = N+M,
\q^{-1}\s = M-N$ we obtain
\begin{equation}
  \label{eq:rozklad}
  \phi_{\q,\s} = \sum_{(\lambda^+,\lambda^-) \in
    \widehat{B}(n)}\chi_{(\lambda^+,\lambda^-)}\prod_{\square
  \in \lambda^+}\left(\q c(\square)+\frac{1+\s}{2}\right) \prod_{\square
  \in \lambda^-}\left(\q c(\square)+\frac{1-\s}{2}\right).
\end{equation}
This in conjunction with \cref{lem:PDcharacter} gives a sufficient and
necessary condition for
$\phi_{\q,\s}$ to be positive definite:
\[ \prod_{\square
  \in \lambda}\left(\q c(\square)+\frac{1+\s}{2}\right) \prod_{\square
  \in \mu}\left(\q c(\square)+\frac{1-\s}{2}\right)\geq 0\]
for any partitions $\mu,\lambda$. This condition is satisfied only for
$\q,\s$ as in our hypothesis, which finishes the proof of the
equivalence \eqref{i} $\Leftrightarrow$ \eqref{iii}.

\vspace{5pt}

Suppose that the parameters $\q,\s$ are given by \eqref{iii}. We will construct a
  representation $\omega_n$ of
$B(n)$ whose normalized
character
\[\frac{\Tr \omega_n(\cdot)}{\Tr \omega_n (\id)}\colon B(n) \to \C\]
coincides with the signed reflection function
\[ \phi_{\q,\s}(\cdot) \colon B(n) \to \C.\]

  Our construction generalizes
  the action of the symmetric group on the tensor
  product of a fixed vector space.
  Consider
  \[ V = \Span\{e^{+}_1,\dots,e^+_{M},e_1^{-},\dots,e_{N}^{-}\}.\]
  The hyperoctahedral group $B(n)$ acts on
  $W := \underbrace{V\otimes\cdots\otimes V}_n$ as follows
  \[ (g_1,\dots,g_n;\sigma)\cdot f_1\otimes\cdots\otimes f_n := \epsilon^{\|\rho^+\|+\|\rho^-\|}c_1(
    f_{\sigma^{-1}(1)})\otimes\cdots\otimes c_n( f_{\sigma^{-1}(n)}),\]
where $c_i$
is an operator $c_i: V\mapsto V $ defined as:
  \[ c_i( f) = \begin{cases} f &\text{ if } f=e^+_j,\\ g_i\cdot f
      &\text{ if } f=e^-_j,\end{cases}\]
  and $(g_1,\dots,g_n;\sigma) \in
  \CCC_{\rho^+,\rho^-}$.
  It is straightforward to check that this action extended by multilinearity defines the
  representation of $B(n)$ on $W$. Let us compute the character of
  this representation. Pick an element $g=(g_1,\dots,g_n;\sigma) \in
  \CCC_{\rho^+,\rho^-} \subset  B(n)$ and note that the only elementary tensors from $W$ 
  contributing to the trace of $\omega_n(g)$ have
  necessarily the same vectors $e$ in all the coordinates corresponding to the
  points in the same cycle of $\sigma$. Let us choose this vector
  equal to $e_j^{\pm}$ for a fixed cycle $c$ of length $\ell(c)$. Then, the eigenvalue contributing to this cycle is
  given by
  \[
    \begin{cases} \epsilon^{\ell(c)-1}
    &\text{ when $c$ is a positive cycle},\\ \pm \cdot \epsilon^{\ell(c)-1}
    &\text{ when $c$ is a negative cycle}. \end{cases}\]
  This means that
  \[ \Tr \omega_n(g) = \epsilon^{\|\rho^+\|+\|\rho^-\|}(N+M)^{n-\|\rho^+\|-|\rho^-|}(M-N)^{\ell(\rho^-)}.\]
Moreover the dimension of $W$ is equal to $(N+M)^n$, therefore we have the following formula
\[ \frac{\Tr \omega_n(g)}{\Tr \omega_n (\id)} =
  \left(\frac{\epsilon}{N+M}\right)^{\|\rho^+\|+\|\rho^-\|}\cdot \left(\frac{M-N}{N+M}\right)^{\ell(\rho^-)}=\phi_{\q,\s}(g).\]

\end{proof}

\begin{remark}
  \label{remark:SymInfty}
  Note that the classical Frobenius formula relates the irreducible
  characters $\chi_\lambda$ of the symmetric group
  $\Sym{n}$ with the
  Schur symmetric function $s_\lambda(\xx)$:
  \[ p_\mu(\xx) = \sum_{\lambda \vdash
      n}\chi_\lambda(\mu)s_\lambda(\xx).\]
 Using the same arguments as in the proof of
 \cref{theo:PositiveDefinite} one can apply the Frobenius formula to show that the reflection
 function $g \to q^{\ell_S(g)}$ on the infinite symmetric group
 $\Sym{\infty}$ is positive definite if and only if $q=0$ or $q^{-1}
 \in \Z$.
  \end{remark}

\subsection{The signed reflection length and extreme characters of $B(\infty)$}
\label{subsec:Hirai}

The set of characters of the group $B(\infty)$ forms an infinite
dimensional simplex. A character $\phi: B(\infty) \to \C$ is called
\emph{extreme} if it belongs to the extreme points of the
simplex. Since any simplex is uniquely determined by its extreme
points, the classification problem of the characters of $B(\infty)$
reduces to the problem of characterizing the extreme characters. The
classification of all the extreme characters of the group $B(\infty)$, which
is a type $B$ analog of the classical Thoma's theorem, was found by
Hirai and Hirai \cite{HiraiHirai2002}. They proved that the set of
extreme characters of $B(\infty)$ is parametrized by the
following sequences:
\begin{align*}
\alpha_1\geq\alpha_2 \geq \dots &\in \RR_{\geq 0}^\infty,\\
  \beta_1\geq\beta_2 \geq \dots &\in \RR_{\geq 0}^\infty,\\
  \gamma_1\geq\gamma_2 \geq \dots &\in \RR_{\geq 0}^\infty,\\
  \delta_1\geq\delta_2 \geq \dots &\in \RR_{\geq 0}^\infty,\\
  \kappa &\in \RR \text{ such that }\\
  \|\alpha\|+\|\beta\|+\|\gamma\|+\|\delta\| + |\kappa| &= \sum_{i
                                                          \geq
                                                          0}(\alpha_i+\beta_i+\gamma_i+\delta_i)+|\kappa|
                                                          \leq 1.
\end{align*}
For any such sequences $\alpha,\beta,\gamma,\delta,\kappa$ the value of the
associated extreme character
$\psi_{\alpha,\beta,\gamma,\delta,\kappa} $ on the conjugacy class
$\CCC_{(\rho^+,\rho^-)}$ is given by the following formula:
\begin{align*}
  &\psi_{\alpha,\beta,\gamma,\delta,\kappa}(\rho^+,\rho^-) =
  (\|\a\|+\|\beta\|-\|\gamma\|-\|\delta\|+\kappa)^{m_1(\rho^-)}\times\prod_{\epsilon
                                                         \in \{+,-\}}\times\\
&\prod_{j=2}^\infty\left(\sum_{i=1}^\infty\alpha_i^{j}+(-1)^{j-1}\sum_{i=1}^\infty\beta_i^{j}+\epsilon\sum_{i=1}^\infty\gamma_i^{j}+\epsilon
                                                                                (-1)^{j-1}\sum_{i=1}^\infty\delta_i^{j}\right)
                                                                                ^{m_j(\rho^\epsilon)}.
\end{align*}
It is straightforward to check that the signed reflection
function $\phi_{\q,\s}$ with parameters $\q,\s$ given by \eqref{iii}
coincides with the extreme character of $B(\infty)$ corresponding to
the sequences
\begin{align*}
\alpha &= (\underbrace{\frac{1}{M+N},\dots,\frac{1}{M+N}}_M,0\dots),
         \qquad\beta=0, \\
  \gamma &= (\underbrace{\frac{1}{M+N},\dots,\frac{1}{M+N}}_N,0\dots), \qquad\delta=0, \qquad\kappa=0
\end{align*}
when $\q=\frac{1}{M+N},\s=\frac{M-N}{N+M}$,
\begin{align*}
\alpha &= 0,
         \qquad\beta=(\underbrace{\frac{1}{M+N},\dots,\frac{1}{M+N}}_M,0\dots),\\
  \gamma &= 0, \qquad\delta=(\underbrace{\frac{1}{M+N},\dots,\frac{1}{M+N}}_N,0\dots), \qquad\kappa=0
\end{align*}
when $\q=\frac{-1}{M+N},\s=\frac{M-N}{N+M}$, and
\begin{align*}
\alpha &= \beta = \gamma = \delta = 0,\qquad \kappa = \s
\end{align*}
when $\q=0, |\s|\leq 1$.

These considerations in conjunction with
\cref{theo:PositiveDefinite} give directly \cref{theo:PositiveDefinite'}.


\section{Applications}
\label{sec:Applications}

In this section, we assume that the parameters    $\q$ and  $ \s$ are as in \cref{theo:PositiveDefinite'}. 

\subsection{Cyclic Fock space  of type B}
\label{subsec:Fock}

Let $H_\R$ be a separable real Hilbert space and let $H$ be its
complexification with the inner product $\langle\cdot,\cdot\rangle$ linear on the right component and anti-linear on the left. 
The Hilbert space $\HH:=H\otimes {H}$ is the complexification of its
real subspace $\HH_\R:=H_\R\otimes {H}_\R$, with the inner product
\[\langle x\otimes y,\xi\otimes \eta  \rangle_{\HH} = \langle
  x,\xi\rangle\langle y,\eta\rangle.\]
We define $\H:=H^{\otimes n} \otimes {{H}^{\otimes n}}=H^{\otimes 2n}$ and instead of
indexing its simple tensors by $\{1,,\dots,2n\}$ we will index
them by $[\pm n]$:
$$\H\ni\mathbf{x}_{\overline n} \otimes \mathbf{x}_n=x_{\overline n}\otimes\cdots \otimes x_{\overline 1} \otimes  x_1\otimes\cdots \otimes x_n=x_{\overline n}\otimes \cdots  \otimes x_{n}.$$
We use this convention for indexing the elements of $\H$ to define a natural action of the
hyperoctahedral group $B(n)$ on $\H$ by setting:
\begin{align*}
\sigma  :\H&\to \H \\
x_{\overline n}\otimes\cdots \otimes x_{\overline 1} \otimes  x_1\otimes\cdots \otimes x_n &\mapsto x_{\sigma(\overline n)}\otimes\cdots \otimes x_{\sigma(\overline 1)} \otimes x_{\sigma(1)}\otimes\cdots \otimes x_{\sigma(n)}.
\end{align*}
for any $\sigma \in B(n)$.
Let $\F$ be the (algebraic) full Fock space over $\HH$
\begin{equation}
\F:= \bigoplus_{n=0}^\infty \H = \bigoplus_{n=0}^\infty  H^{\otimes 2n} 
\end{equation} 
with the convention that $\HH^{\otimes 0} =H^{\otimes 0} \otimes H^{\otimes 0}=\C\Omega \otimes \Omega$ is the one-dimensional normed space along the unit vector $\Omega \otimes \Omega$. Note that elements of $\F$ are finite linear combinations of the elements from $H^{\otimes 2n}, n\in \N\cup\{0\}$ and we do not take the completion. 
We equip $\F$ with the inner product  
$$
\langle x_{\overline n} \otimes \cdots \otimes  x_{\overline 1} \otimes  x_1 \otimes \cdots \otimes x_n, y_{\overline m} \otimes \cdots \otimes y_{\overline 1} \otimes  y_1 \otimes \cdots \otimes y_m\rangle_{0,0}:= \delta_{m,n}\prod_{i=\overline n }^n \langle x_i, y_i\rangle.  
$$ 
 For the parameters $\q$ and $\s$ described in \cref{theo:PositiveDefinite'} \eqref{th:iv} we define the symmetrization operators
 \begin{align*}
&P_{\q,\s}^{(n)}:= \sum_{\sigma \in B(n)}\phi_{\q,\s}(\sigma) \, \sigma,\qquad n \geq1, \\ 
&P_{\q,\s}^{(0)}:= \id_{H^{\otimes 0}\otimes H^{\otimes 0}}. 
\intertext{Moreover let }
&P_{\q,\s}:=\bigoplus_{n=0}^\infty P_{\q,\s}^{(n)}
\end{align*}  be the \emph{cyclic type B symmetrization operator} acting on the algebraic full Fock space. 
For $\x\in \HH_{ n}$ and $\mathbf{y}_{\overline m} \otimes \mathbf{y}_m\in
\HH_{ m}$ we deform the inner product $\langle\cdot,\cdot\rangle_{0,0}$ by using the cyclic type B symmetrization operator: 
\begin{align*}
\langle \x,\mathbf{y}_{\overline m} \otimes \mathbf{y}_m\rangle_{\q,\s}:=\delta_{n,m}\langle \x,P_{\q,\s}^{(m)}\mathbf{y}_{\overline m} \otimes \mathbf{y}_m\rangle_{0,0}
\end{align*}
which by \cref{theo:PositiveDefinite'} is a semi-inner product from the positivity of $P_{\q,\s}$. 
For $x\in H$ let $l(x)$ and $r(x)$ be the free left and right
annihilator operators on $H^{\otimes n} $,  respectively, defined by
the equations
\begin{align*}
&l^\ast(x)(x_1\otimes \dots \otimes x_n ):=x\otimes x_1\otimes \dots \otimes x_n  ,
\\
&l(x)(x_1\otimes \dots \otimes x_n ):=\langle  x, x_1 \rangle  x_2\otimes \dots \otimes x_n , \\
&r^\ast(x)(x_1\otimes \dots \otimes x_n ):= x_1\otimes \dots \otimes x_n \otimes x,\\
&r(x)(x_1\otimes \dots \otimes x_n ):=\langle  x, x_n \rangle  x_1\otimes \dots \otimes x_{n-1}, 
\intertext{where the adjoint is taken with respect to the free inner product. 
The  left-right creation and  annihilation operators $\r^\ast(x\otimes y), \r(x\otimes y)$ on $\F$  are defined by }
&\r^\ast(x\otimes y)(\x):=l^\ast(x)\mathbf{x}_{\overline n}\otimes 
 r^\ast( y) \mathbf{x}_n, \quad  &&\r^\ast(x\otimes y)\Omega \otimes \Omega:=x \otimes y, \\
&\r(x\otimes y)(\x ):= l( x) \mathbf{x}_{\overline n}\otimes r(y)\mathbf{x}_n  , \quad &&\r(x\otimes y)\Omega \otimes \Omega :=0,
\end{align*}
where $\x\in  \H,\text{ }
 n\geq 1$. 
Then it holds that $\r^\ast(x\otimes y)^\ast = \r(x\otimes y)$ where the adjoint is taken with respect to $\langle \cdot, \cdot \rangle_{0,0}$ and 
$\r^\ast: \HH \to B(\F)$ is linear, but  $\r: \HH \to B(\F)$ is
anti-linear (here and throughout the paper we will use the notation
$B(X)$ for the space of bounded operators on $X$) .

\begin{definition} Let $\q$ and $\s$ be as in
  \cref{theo:PositiveDefinite'} \eqref{th:iv}. The algebraic full Fock space $\F$ equipped with the inner product $\langle\cdot,\cdot \rangle_{\q,\s}$ is called the \emph{cyclic Fock space of type B} and it is denoted by $\mathcal{F}_{\q,\s}(\HH)$. 
For $x\otimes y \in \HH$ we define $\B^\ast(x\otimes y):=
\r^\ast(x\otimes y)$ and we consider its adjoint operator $\B(x\otimes
y)$ with
respect to the inner product $\langle\cdot,\cdot \rangle_{\q,\s}$
acting on the Hilbert space $\mathcal{F}_{\q,\s}(\HH)$ (note that a
priori $\B^\ast(x\otimes y)$ might not be bounded). The operators $\B^\ast(x\otimes y)$ and $\B(x\otimes y)$ are called \emph{cyclic creation and cyclic annihilation operators of type B}.  
\end{definition} 
The following proposition can be derived directly
from  \cref{lem:Factorization}. 
\begin{proposition}\label{prop1}
We have the decomposition 
\begin{equation}\label{decomposition}
P^{(n)}_{\q,\s}=\left( \id \otimes P^{(n-1)}_{\q,\s}\otimes \id \right)R^{(n)}_{\q,\s} \text{~on $\H$}, \qquad n\geq 1, 
\end{equation}
where
\begin{equation}
R^{(n)}_{\q,\s} = \id+ \s J^-_n+
 \q J^+_n. 
\end{equation}
\end{proposition}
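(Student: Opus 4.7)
The plan is to derive the identity directly from the factorization lemma (\cref{lem:Factorization}). That lemma, applied at level $n$, gives the group-algebra identity
\[
P^{(n)}_{\q,\s} \;=\; \sum_{\sigma\in B(n)}\phi_{\q,\s}(\sigma)\,\sigma \;=\; \prod_{i=1}^{n}\bigl(\id + \q J^{+}_i + \s J^{-}_i\bigr),
\]
so my first step is to isolate the last factor of this product. By definition, the $i=n$ factor is precisely $R^{(n)}_{\q,\s}$, which leaves the identity
\[
P^{(n)}_{\q,\s} \;=\; \Bigl(\prod_{i=1}^{n-1}\bigl(\id + \q J^{+}_i + \s J^{-}_i\bigr)\Bigr)\, R^{(n)}_{\q,\s}.
\]

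Next I would identify the remaining product with $P^{(n-1)}_{\q,\s}$ viewed through the canonical embedding $B(n-1)\hookrightarrow B(n)$. Indeed the operators $J^{+}_i$ ($i<n$) and $J^{-}_i$ ($i<n$), as defined just after the factorization lemma, involve only the indices in $[\pm(n-1)]$, so they lie in the subalgebra $\C[B(n-1)]\subset \C[B(n)]$; applying \cref{lem:Factorization} now to $B(n-1)$ gives $\prod_{i=1}^{n-1}(\id+\q J^{+}_i+\s J^{-}_i)=P^{(n-1)}_{\q,\s}$.

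The one point that needs to be checked carefully is the identification of this action on $\H$ with $\id\otimes P^{(n-1)}_{\q,\s}\otimes \id$. Under our indexing convention
\[
\x = x_{\overline n}\otimes x_{\overline{n-1}}\otimes\cdots\otimes x_{\overline 1}\otimes x_1\otimes\cdots\otimes x_{n-1}\otimes x_n,
\]
the subgroup $B(n-1)\subset B(n)$ fixes the labels $\overline n$ and $n$, so it acts trivially on the leftmost and rightmost tensor factors of $\H$ and acts as the standard $B(n-1)$-action on the middle $2(n-1)$ factors, which by definition is the action of $P^{(n-1)}_{\q,\s}$ on $\HH_{n-1}$. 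Hence $\prod_{i=1}^{n-1}(\id+\q J^{+}_i+\s J^{-}_i) = \id\otimes P^{(n-1)}_{\q,\s}\otimes \id$ as operators on $\H$, and combining with the isolation of the last factor yields \cref{decomposition}.

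There is no real obstacle beyond the bookkeeping of the tensor indices; the only subtlety is making sure that the leftmost/rightmost $\id$ factors correspond to the labels $\overline n$ and $n$, which is immediate from the chosen convention $\x = x_{\overline n}\otimes\cdots\otimes x_n$.
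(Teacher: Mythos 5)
Your proof is correct and takes essentially the same route as the paper: the paper gives no detailed argument at all, saying only that the proposition "can be derived directly from" \cref{lem:Factorization}, and your proposal supplies exactly that derivation — peel off the $i=n$ factor $R^{(n)}_{\q,\s}$ and identify the remaining product with $\id\otimes P^{(n-1)}_{\q,\s}\otimes\id$ via the embedding $B(n-1)\hookrightarrow B(n)$, which fixes the labels $\overline{n}$ and $n$. The one point you leave tacit is the ordering of the non-commuting factors in the lemma's product; under the standard convention (index increasing from left to right) your isolation of the rightmost factor is exactly what is needed, and in any case both orderings yield $P^{(n)}_{\q,\s}$, since $P^{(n)}_{\q,\s}$ and each factor $\id+\q J^+_i+\s J^-_i$ are fixed by the antiautomorphism $\sum_\sigma a_\sigma\sigma\mapsto\sum_\sigma a_\sigma\sigma^{-1}$ of the group algebra.
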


Now, we can compute the annihilation operator in terms of $R_{\q,\s}^{(n)}$. 
\begin{proposition}\label{prop2} For $n \geq1$, we have 
\begin{equation}
\B(x\otimes y)=\r(x\otimes y) R^{(n)}_{\q,\s} \text{~on $\H$}. 
\end{equation}
\end{proposition}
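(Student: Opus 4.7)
The plan is to verify that the operator $\r(x\otimes y) R^{(n)}_{\q,\s}$ satisfies the defining adjoint relation of $\B(x\otimes y)$ with respect to $\langle\cdot,\cdot\rangle_{\q,\s}$, namely
\[
\langle \r(x\otimes y) R^{(n)}_{\q,\s}\,\xi,\eta\rangle_{\q,\s} \;=\; \langle \xi,\,\r^\ast(x\otimes y)\,\eta\rangle_{\q,\s}
\]
for every $\xi\in\H$ and $\eta\in\HH_{n-1}$. Since the cyclic annihilator $\B(x\otimes y)$ is uniquely characterised (on the quotient of $\mathcal F$ by the kernel of the semi-inner product) as the $\langle\cdot,\cdot\rangle_{\q,\s}$-adjoint of $\B^\ast(x\otimes y)=\r^\ast(x\otimes y)$, the claim follows immediately from this identity.

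First I would unfold the left-hand side using the definition of $\langle\cdot,\cdot\rangle_{\q,\s}$ on $\HH_{n-1}$, which inserts $P^{(n-1)}_{\q,\s}$ next to $\eta$, and then use that $\r^\ast$ is the $\langle\cdot,\cdot\rangle_{0,0}$-adjoint of $\r$ to move $\r^\ast(x\otimes y)$ past the inner product. The next — and really only — intertwining observation is that, under the indexing of $\H$ by $[\pm n]$, the creator $\r^\ast(x\otimes y)$ places $x$ at position $\overline n$ and $y$ at position $n$, leaving the middle $2(n-1)$ positions untouched; consequently
\[
\r^\ast(x\otimes y)\,P^{(n-1)}_{\q,\s}\eta \;=\; \bigl(\id \otimes P^{(n-1)}_{\q,\s}\otimes \id\bigr)\,\r^\ast(x\otimes y)\,\eta.
\]
Applying self-adjointness of $R^{(n)}_{\q,\s}$ with respect to $\langle\cdot,\cdot\rangle_{0,0}$ then transports this operator back onto $\xi$.

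At this stage the computation has reached $\langle \xi,\, R^{(n)}_{\q,\s}(\id\otimes P^{(n-1)}_{\q,\s}\otimes\id)\,\r^\ast(x\otimes y)\eta\rangle_{0,0}$, and the key step is to recognise $R^{(n)}_{\q,\s}(\id\otimes P^{(n-1)}_{\q,\s}\otimes\id)$ as $P^{(n)}_{\q,\s}$. The decomposition in \cref{prop1} gives the factorisation in the opposite order, so I would obtain the required identity by taking $\langle\cdot,\cdot\rangle_{0,0}$-adjoints of both sides of \eqref{decomposition}: each of the operators $P^{(n)}_{\q,\s}$, $P^{(n-1)}_{\q,\s}$, and $R^{(n)}_{\q,\s}$ is self-adjoint, because they are real linear combinations of permutations from $B(n)$ whose tensor action is unitary with $\sigma^\ast=\sigma^{-1}$, and $\phi_{\q,\s}$ is invariant under inversion as it depends only on the signed cycle type (\cref{prop:well-def}). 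Hence also $P^{(n)}_{\q,\s}=R^{(n)}_{\q,\s}\bigl(\id\otimes P^{(n-1)}_{\q,\s}\otimes\id\bigr)$, which closes the chain of equalities.

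The main obstacle is almost purely notational: matching the middle-tensor structure of $\id\otimes P^{(n-1)}_{\q,\s}\otimes\id$ with the positions of $\r^\ast(x\otimes y)$ and justifying the reversed form of \cref{decomposition}. There are no analytic issues, since everything takes place on each algebraic tensor degree separately, and the identity passes to the Hilbert completion automatically once it holds on $\H$.
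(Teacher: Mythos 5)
Your proof is correct, and it is the paper's adjoint computation run in the mirror direction, which changes which lemmas are needed. The paper starts from $\langle f, \B(x\otimes y) g\rangle_{\q,\s}$ with $g\in\H$ and $f\in\HH_{n-1}$, so after unfolding the deformed inner product the symmetrizer $P^{(n)}_{\q,\s}$ sits on the degree-$n$ argument and \eqref{decomposition} applies verbatim; the only other ingredient is the intertwining $\r(x\otimes y)\left(\id\otimes P^{(n-1)}_{\q,\s}\otimes\id\right)=P^{(n-1)}_{\q,\s}\r(x\otimes y)$. Because you instead verify $\langle \r(x\otimes y)R^{(n)}_{\q,\s}\xi,\eta\rangle_{\q,\s}=\langle\xi,\r^\ast(x\otimes y)\eta\rangle_{\q,\s}$, the symmetrizer $P^{(n-1)}_{\q,\s}$ lands on the degree-$(n-1)$ argument, and you are forced to use the adjoint form of the intertwining (correct, since $\r^\ast$ only touches the outer slots $\overline n$ and $n$) together with the \emph{reversed} factorization $P^{(n)}_{\q,\s}=R^{(n)}_{\q,\s}\left(\id\otimes P^{(n-1)}_{\q,\s}\otimes\id\right)$, which is not literally what \cref{prop1} states. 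Your derivation of the reversed form is sound: $P^{(n)}_{\q,\s}$, $P^{(n-1)}_{\q,\s}$ and $R^{(n)}_{\q,\s}$ are self-adjoint for $\langle\cdot,\cdot\rangle_{0,0}$, being real linear combinations of tensor-permutation unitaries with $\sigma^\ast=\sigma^{-1}$ and coefficients invariant under $\sigma\mapsto\sigma^{-1}$ (by \cref{prop:well-def}, $\phi_{\q,\s}$ depends only on the signed cycle type, which inversion preserves), so taking $\langle\cdot,\cdot\rangle_{0,0}$-adjoints of \eqref{decomposition} reverses the order. In short, your route costs two extra observations that the paper's ordering avoids (self-adjointness of the symmetrizers and the reversed factorization), and in exchange it records a structural fact the paper never makes explicit; both arguments share the same caveat, which you flag and the paper leaves implicit, namely that $\langle\cdot,\cdot\rangle_{\q,\s}$ is only a semi-inner product, so the adjoint identity pins down $\B(x\otimes y)$ only modulo its kernel.
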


\begin{proof}
Let $f \in \HH_{n-1},g \in \H$. Then 
\begin{equation}
\begin{split}
\langle f, \B(x\otimes y)g \rangle_{\q,\s} 
&=\langle \B^\ast(x\otimes y)f, g \rangle_{\q,\s} =\langle \r^\ast(x\otimes y) f, g \rangle_{\q,\s} \\
&=\left\langle\r^\ast(x\otimes y) f,  P^{(n)}_{\q,\s} g \right\rangle_{0,0}  \\
&=\left\langle \r^\ast(x\otimes y) f,   \left(\id \otimes  P^{(n-1)}_{\q,\s}\otimes \id \right) R^{(n)}_{\q,\s}g \right\rangle_{0,0} \\
&=\left\langle  f,  \r(x\otimes y)  \left(\id \otimes P^{(n-1)}_{\q,\s}\otimes \id \right) R^{(n)}_{\q,\s}g \right\rangle_{0,0}. 
\end{split}
\end{equation}
Observe that  $\r(x\otimes y) (\id \otimes P^{(n-1)}_{\q,\s}\otimes \id) h = P^{(n-1)}_{\q,\s}\r(x\otimes y)h$ for $h\in \H$ and so we get  
\begin{equation}
\begin{split}
 \left\langle  f,  \r(x\otimes y)  (\id \otimes P^{(n-1)}_{\q,\s}\otimes \id) R^{(n)}_{\q,\s}g \right\rangle_{0,0} 
&=\left\langle  f,  P^{(n-1)}_{\q,\s} \r(x\otimes y)R^{(n)}_{\q,\s}g \right\rangle_{0,0}  \\
&=\left\langle  f, \r(x\otimes y)R^{(n)}_{\q,\s} g \right\rangle_{\q,\s}. 
\end{split}
\end{equation}
\end{proof}
In order to simplify the notation we define the operators
\begin{align*}
\J_i(x\otimes y ): \H & \to \HH_{n-1} \textrm{ for $i \in [\pm n]$ by}
\\
 \mathbf{x}_{\overline n} \otimes \mathbf{x}_n & \mapsto  
\left\{ \begin{array}{ll}
  \langle  x,  x_{\overline n} \rangle \langle y, x_{n} \rangle  \mathcal{C}({ \x})  & \textrm{ $i =n $}\\
  \langle  x,  x_{\overline i} \rangle \langle y, x_{i} \rangle
          \mathcal{C}({(i,n)(\overline i,\overline n) \x})  & \textrm{ $i \in
                                                    [\pm (n-1)]$}\\
  \langle  x,  x_{ n} \rangle \langle y, x_{\overline n } \rangle  \mathcal{C}({\x})  & \textrm{ $i=\overline n$ }
\end{array} \right.
\end{align*}
where $\mathcal{C}(x_{\overline n}\otimes \cdots  \otimes x_{n})=x_{\overline{n-1}}\otimes \cdots   \otimes x_{n-1}$ and $\mathcal{C}(x_{\overline 1}\otimes x_{1})=\Omega \otimes \Omega.$
\begin{remark}\label{ramerkdecoposition}
By using the above notation we can decompose  $\B(x\otimes y)$ as follows 
$$
\B(x\otimes y)= \a_0(x\otimes y) +\p_{\q}(x\otimes y)+\n_{\s}(x\otimes y)  
, \qquad x\otimes y \in \HH, 
$$ 
where  
\begin{align}
\a_0(x\otimes y)=&\J_{n}  \label{p0}
\\
\label{pq}
\p_{\q}(x\otimes y)=& \q\sum_{i \in [\pm (n-1)]}  \J_{i} 
\\ \n_{\s}(x\otimes y)=&   \s\, \J_{\overline n}.
\label{nt}
\end{align}
\end{remark}

The second quantisation differential operator $\Gamma_{\q} (A\otimes B)$ is defined by the equation
\begin{align*}
\Gamma_{\q} (A\otimes B)x_{\overline n}\otimes\cdots \otimes x_n =\q &\sum_{1\leq i\leq n} x_{\overline n}\otimes\cdots \otimes A x_{\overline i} \otimes \dots \otimes   B x_i\otimes\cdots \otimes x_n 
\\+\q &\sum_{1 \leq i\leq n} x_{\overline n}\otimes\cdots \otimes B x_{ i} \otimes \dots \otimes   A x_{\overline i}\otimes\cdots \otimes x_n 
\end{align*}
where $A\otimes B \in B(\HH)$ and $\Gamma_{\q} \Omega \otimes\Omega=0$. 
The above considerations provide a new commutation relation.

\begin{theorem}\label{commutation}
For $x\otimes y, \xi\otimes \eta \in \HH$ we have the cyclic  commutation relation of type B 
\begin{equation}
\B(x\otimes y)\B^\ast(\xi\otimes \eta)= \langle  x,\xi \rangle\langle  y,\eta \rangle \id +\s \langle  x, \eta \rangle\langle  y, \xi \rangle \id  +   \Gamma_{\q} (|\xi\rangle\langle x| \otimes |\eta\rangle\langle y| ),
\end{equation}
where $|\xi\rangle\langle x|(\cdot):=\langle x,\cdot\rangle \xi$. We note that $|x\rangle\langle x|$ is 
the projection on the one dimensional space spanned by $x$
and  $\Gamma_{\q} (|\xi\rangle\langle x| \otimes |\eta\rangle\langle y| ) =\p_{\q}(x\otimes y)\r^\ast(\xi\otimes \eta) $.
\end{theorem}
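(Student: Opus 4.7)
The plan is to compute $\B(x\otimes y)\B^\ast(\xi\otimes\eta)$ applied to an arbitrary homogeneous element $\x \in \H$ by first pushing through $\B^\ast(\xi\otimes\eta)=\r^\ast(\xi\otimes\eta)$, which sends $\x$ to $\xi\otimes\x\otimes\eta \in \HH_{n+1}$, and then expanding $\B(x\otimes y)$ via the decomposition
\[
\B(x\otimes y)=\a_0(x\otimes y)+\p_{\q}(x\otimes y)+\n_{\s}(x\otimes y)
\]
from Remark \ref{ramerkdecoposition}, read on $\HH_{n+1}$. The three summands will yield, in order, the three terms on the right-hand side of the commutation relation.

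First I would handle $\a_0(x\otimes y) = \J_{n+1}$, which pairs the outermost positions $\overline{n+1}$ and $n+1$ of $\HH_{n+1}$; since those positions carry exactly $\xi$ and $\eta$, after the contraction $\mathcal{C}$ this contributes $\langle x,\xi\rangle\langle y,\eta\rangle\cdot\x$, accounting for the first term. The component $\n_{\s}(x\otimes y) = \s\J_{\overline{n+1}}$ is analogous but swaps the outer pair, producing $\s\langle x,\eta\rangle\langle y,\xi\rangle\cdot\x$, which gives the second term.

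The substantive step is identifying the contribution
\[
\p_{\q}(x\otimes y)\r^\ast(\xi\otimes\eta)(\x) = \q\sum_{i\in[\pm n]}\J_i(\xi\otimes\x\otimes\eta)
\]
with $\Gamma_{\q}(|\xi\rangle\langle x|\otimes|\eta\rangle\langle y|)(\x)$. Here I would unpack the double transposition $(i,n+1)(\overline i,\overline{n+1})$ appearing in $\J_i$: for $i=k\in[n]$ it relocates $x_{\overline k}, x_k$ to the outermost positions, where $\r(x\otimes y)$ takes inner products against $x,y$, while pushing $\xi,\eta$ into positions $\overline k, k$; for $i=\overline k \in [-n]$ the same mechanism runs with the roles of $\xi,\eta$ (and the corresponding scalar weights) reversed. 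Summing over $k=1,\dots,n$ and applying $\mathcal C$ produces precisely the two sums in the definition of $\Gamma_{\q}(|\xi\rangle\langle x|\otimes|\eta\rangle\langle y|)$, proving simultaneously the main identity and the supplementary equality $\Gamma_{\q}(|\xi\rangle\langle x|\otimes|\eta\rangle\langle y|) = \p_{\q}(x\otimes y)\r^\ast(\xi\otimes\eta)$ asserted in the theorem.

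The vacuum case $n=0$ has to be dispatched separately: on $\HH_1$ the sum defining $\p_{\q}$ is empty and $\Gamma_{\q}$ kills $\Omega\otimes\Omega$ by definition, so only the two scalar terms contribute and they match trivially. The main obstacle is the indexing bookkeeping in the third step, namely tracking which of $\xi, \eta$ lands at which interior position $\overline k, k$ under the transpositions indexed by positive versus negative $i$, and matching the resulting pair of contributions to the two summands in the definition of $\Gamma_{\q}$; once this dictionary between the action of $\J_i$ on $\xi\otimes\x\otimes\eta$ and the action of the rank-one operators $|\xi\rangle\langle x|, |\eta\rangle\langle y|$ on $\x$ is set up, the verification of the commutation relation is immediate.
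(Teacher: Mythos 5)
Your proposal is correct and follows essentially the same route as the paper, whose (implicit) proof of \cref{commutation} is precisely to combine \cref{prop2} with the decomposition $\B(x\otimes y)=\a_0(x\otimes y)+\p_{\q}(x\otimes y)+\n_{\s}(x\otimes y)$ of \cref{ramerkdecoposition} applied after $\B^\ast(\xi\otimes\eta)=\r^\ast(\xi\otimes\eta)$, with the three summands producing the three terms of the relation exactly as you describe. Your bookkeeping for the $i=\overline{k}$ terms (scalar weights $\langle x,x_k\rangle\langle y,x_{\overline k}\rangle$ with $\eta$, $\xi$ landing at positions $\overline{k}$, $k$) is the intended reading of the somewhat ambiguously typeset second sum in the definition of $\Gamma_{\q}$, as is confirmed by the identity $\Gamma_{\q}(|\xi\rangle\langle x|\otimes|\eta\rangle\langle y|)=\p_{\q}(x\otimes y)\r^\ast(\xi\otimes\eta)$ asserted in the theorem itself.
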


\subsubsection{Exclusion principle}
\label{subsec:Exclusion}

The Pauli exclusion principle is the quantum mechanical principle which states that two or more identical fermions cannot occupy the same quantum state within a quantum system simultaneously. 
We will explain that for $\q< 0$ an exclusion principle is found allowing at most $M$ identical particles on the same state $x\otimes x$, with $\|x\|=1$.  
This might have some interest also from the physics
point of view.
First observe that $\B(x\otimes x)\B^\ast(x\otimes x)\geq 0$. Thus, \cref{commutation} entails that 
\begin{align*}
  (1 +\s) \id &+\Gamma_{\q} (|x\rangle\langle x|\otimes |x\rangle\langle x|) \geq   0.
 \intertext{Computing $\langle \big((1 +\s) \id +\Gamma_{\q} (|x\rangle\langle x|\otimes |x\rangle\langle x|)\big) x^{\otimes 2n},x^{\otimes 2n}\rangle_{\q,\s} $ we obtain that}
  1 +\s &\geq   - \q 2n.
  \intertext{We recall that $\q = \frac{-1}{M+N}$ and
          $\s=\frac{M-N}{M+N}$ which gives the inequality} 
  M &\geq    n. 
\end{align*}
We can also explain this phenomenon by using \eqref{decomposition}, namely 
\begin{align*}
0 & \leq \langle x^{\otimes 2n},x^{\otimes 2n}\rangle_{\q,\s}\\&=\langle x^{\otimes 2n},P_{\q,\s}^{(n)}x^{\otimes 2n}\rangle_{0,0}
\\ & =(1+2(n-1)\q+\s)\langle x^{\otimes 2(n-1)},P_{\q,\s}^{(n-1)}x^{\otimes 2(n-1)}\rangle_{0,0}
\\ & =(1+2(n-1)\q+\s)\langle x^{\otimes 2(n-1)},x^{\otimes 2(n-1)}\rangle_{\q,\s}.
\end{align*}
This gives us the inequality $n<M+1$, and consequently, we obtain $ n\leq M$. 


\subsection{Cycles on pair partitions of type B}
\label{subsec:PartitionsB}

Let $S\subseteq\N $ be a finite subset. 
For an ordered set $S$, let $\Part(S)$ denote the lattice of set partitions of that set. For a partition $\pi\in \Part(S)$,
we write $B \in \pi$ if $B$ is a class of $\pi$ and we say that $B$ is a \emph{block of $\pi$}.
 Any partition $\pi$ defines an equivalence relation on $S$,
denoted by $\sim_\pi$, such that the equivalence classes are the
blocks of $\pi$. 
That is, $i\sim_\pi j$ if $i$ and $j$ belong to the same block of $\pi$.
 A block of $\pi$ is called a \emph{singleton} if it consists of one element. 
Similarly, a block of $\pi$ is called a \emph{pair} if it consists of two elements.
 $\Part(n)$ is a lattice under the \emph{refinement order}, where the relation $\pi\leq \rho$
holds if
every block of $\pi$ is contained in a block of $\rho$.

A partition $\pi$ is called \emph{noncrossing} 
if different blocks do not interlace, i.e., there is no quadruple of
elements $i<j<k<l$ such that $i\sim_\pi k$ and $j\sim_\pi l$ 
but $i\not\sim_\pi j$.  
The set of non-crossing partitions of $S$ is denoted by $\NC(S)$. 
The subclass of noncrossing partitions whose every block is either a
pair or a singleton (i.e.~noncrossing matchings)
is denoted by $\NC_{1,2}(S)$.

\begin{definition} \label{def:partycji} 
We denote by $\P_{1,2}^{sym}(n)$ the subset of partitions $\pi \in \Part([\pm n])$ whose every block is either a
pair or a singleton and such that they are symmetric $\overline{\pi} =
\pi$, but every pair $B \in \pi$ is different than its symmetrization
$\overline{B}$, i.e..~$B \neq \overline{B}$. We will order
elements in pairs $\{a,b\}$ of $\pi \in \P_{1,2}^{sym}(n)$ by writing $(a,b)$ which means that $a <
b$ and we call $a$ ($b$ respectively) the \emph{left} (\emph{right},
respectively) leg of $(a,b)$. A pair $(a,b)$ is called \emph{positive} if $b> \overline a$; otherwise
it is called \emph{negative}. The subset of  pair partitions of $\P_{1,2}^{sym}(n)$ is denoted by $\P_2^{sym}(n)$. 
\end{definition}


\begin{proposition}
  \label{prop:pihat}
Let   $\pi\in \P_{1,2}^{sym}(n)$. There exists a unique non-crossing  partition    
$\hat\pi\in \NC_{1,2}([{\pm n}])\cap\P_{1,2}(n)$,  such that 
\begin{enumerate}[label=(\alph*)]
\item the set of right legs of the positive pairs of $\pi$ and $\hat\pi$ coincide;
\item the set of left legs of  the negative pairs of $\pi$ and $\hat\pi$ coincide;
\item pairs of $\hat\pi$ do not cover singletons. 
\end{enumerate}
\end{proposition}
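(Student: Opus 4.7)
The plan is to construct $\hat{\pi}$ explicitly by a left-to-right stack procedure on $[\pm n]$ and to show along the way that every choice is forced by (a)--(c), so that uniqueness falls out of the construction.

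First, I extract from $\pi$ the data that must survive in $\hat\pi$. Conditions (a)--(b) pin down two distinguished subsets $R_+,L_-\subset[\pm n]$: the right legs of positive pairs and the left legs of negative pairs. Since a pair $(a,b)$ is positive iff its conjugate $(\overline b,\overline a)$ is negative, and $\pi$ is symmetric, one gets $L_-=\overline{R_+}$; the sets are disjoint because in $\pi$ every point belongs to exactly one block. The complementary ``free'' set $F:=[\pm n]\setminus(R_+\cup L_-)$ is therefore symmetric, and in $\hat\pi$ each point of $F$ must play exactly one of three roles: left leg of a positive pair (matched to an $R_+$ to its right), right leg of a negative pair (matched to an $L_-$ to its left), or singleton.

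Next, I would scan $[\pm n]$ from left to right, maintaining a stack of unmatched openers tagged $(_+$ or $(_-$. A point $p\in L_-$ is pushed as $(_-$. A point $p\in R_+$ pops the stack and is paired with the popped element as a positive pair. A point $p\in F$ with $(_-$ on top pops the stack and is paired as a negative pair with the popped element. A point $p\in F$ with $(_+$ on top is pushed as $(_+$, since declaring $p$ a singleton is ruled out by (c). A point $p\in F$ with the stack empty is either a singleton or is pushed as $(_+$; this last ambiguity is resolved by the analogous right-to-left scan, or equivalently by the rule that $p$ becomes a $(_+$ precisely when doing otherwise would leave a later $R_+$ without a compatible opener, or would place a singleton under an arc. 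By construction the output is non-crossing, realises the prescribed $R_+,L_-$, and places singletons only at empty-stack positions, thus satisfies (c).

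For uniqueness, I would argue position by position that any $\hat\pi$ satisfying (a)--(c) agrees with the output of the algorithm: (a)--(b) determine the types of $R_+$ and $L_-$ points, the non-crossing condition forces every closer to be paired with the innermost compatible opener, and (c) forbids a singleton wherever an arc is still open. An induction on the scanned position then gives the equality of any two candidate partitions.

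The main obstacle will be to verify that the algorithm never breaks down: whenever $p\in R_+$ is encountered, the stack must be non-empty with a $(_+$ on top (never a $(_-$), and at the end of the scan the stack must be empty. Both reduce to a single balance identity inherited from $\pi$: for every cut $q\in[\pm n]$, the number of $L_-$ points that are $\leq q$ equals the number of $R_+$ points that are $\geq \overline q$, by $L_-=\overline{R_+}$. This symmetric bookkeeping, together with the fact that in $\pi$ itself the $R_+$-closers are fed by the conjugate $L_-$-openers through a genuine pair-or-singleton partition, is what makes the opener/closer count work out globally, closes the ambiguity in the empty-stack case, and finishes the proof.
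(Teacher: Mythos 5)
Your overall strategy (a greedy left-to-right scan with forced moves) is in the same spirit as the paper's proof, but it has a structural gap that the paper closes at the very first step. The paper begins by observing that any symmetric, non-crossing pair/singleton partition of $[\pm n]$ with no self-conjugate pair has every block contained in one of the two halves $\{\overline n,\dots,\overline 1\}$ or $\{1,\dots,n\}$ (a pair crossing the center would cross its own conjugate); this decouples the two halves, reduces everything to the negative half where only the $L_-$ data lives, and turns the construction into a Motzkin-path statement (up steps at $L_-$, down steps elsewhere, horizontal steps $=$ singletons, which condition (c) forces onto the axis), the positive half being recovered by symmetry. Because you scan all of $[\pm n]$ at once and never prove or use this splitting, several of your ``forced'' moves are not actually forced. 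Concretely: when the top of the stack is a $(_-$ with opener $o\in L_-$ and $p\in F$ arrives, you declare that $p$ must close it; but a priori $p$ could instead open a positive pair to be closed by a later point $r\in R_+$, nested inside the $(_-$ arc ($o<p<r<d$). This center-crossing configuration is consistent with (a), (b), (c) and non-crossingness locally, and only the splitting lemma (i.e.\ symmetry of $\hat\pi$) rules it out. The same hole sits inside your uniqueness argument (``every closer pairs with the innermost compatible opener''), and in addition you never verify that your algorithm's output is symmetric with no self-conjugate pairs, i.e.\ that it lies in $\P_{1,2}(n)$ at all.

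Second, the two spots you yourself flag as delicate are genuinely unfinished, and the tool you offer is too weak to finish them. The empty-stack tie-break (``push iff doing otherwise would leave a later $R_+$ without an opener'') quantifies over future choices of the same algorithm, so as stated it is not a well-defined rule, and the claimed equivalence with a right-to-left scan is asserted, not proved; in the paper's reduction this ambiguity never arises, since the negative half contains no $R_+$ points, so a free point with empty stack is necessarily a singleton. As for no-breakdown, the balance identity you invoke is merely a restatement of $L_-=\overline{R_+}$: it holds for any symmetric placement of these two sets and therefore cannot prevent breakdown. For example, the abstract data $L_-=\{\overline 1\}$, $R_+=\{1\}$ in $[\pm 2]$ satisfies your identity for every cut, yet every run of your scan reaches the point $1\in R_+$ with a $(_-$ on top. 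What is actually needed is a density bound that uses the fact that $L_-$ comes from a genuine partition $\pi$: for every $k$, each negative pair of $\pi$ with left leg in $\{\overline k,\dots,\overline 1\}$ lies, together with its conjugate, inside the $2k$-point interval $\{\overline k,\dots,k\}$, and these quadruples are pairwise disjoint, so there are at most $k/2$ such pairs. This inequality (not the conjugation identity) is what guarantees that the scan never gets stuck and ends with an empty stack; it is also the substance behind the paper's brief ``notice that there is a unique way'' step, where it guarantees that the greedy Motzkin path returns to the axis. You gesture at the partition structure of $\pi$ in your last sentence, but the argument itself is missing.
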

\begin{proof}
  Notice that if $\sigma \in \NC_{1,2}([{\pm
    n}])\cap\P_{1,2}(n)$ then each block of $\sigma$ is either
  contained in $\{1,\dots,n\}$ or in
  $\{\overline{n},\dots,\overline{1}\}$. In particular $\sigma$ is
  completely determined by its restriction to
  $\{\overline{n},\dots,\overline{1}\}$. Take a non-crossing partition
  $\pi$ on
  $\{\overline{n},\dots,\overline{1}\}$ whose blocks are either pairs
  or singletons and associate it with the Motzkin path, whose left
  legs correspond to up steps, right legs correspond to down steps and
  singletons correspond to horizontal steps. We recall that a Motzkin
  path of length $n$ is a path starting at $(0,0)$ and finishing in
  $(n,0)$ which never goes below the horizontal and consists of three
  steps: the up step $(1,1)$, the down step $(1,-1)$ and the
  horizontal step $(1,0)$. Notice that pairs do not
  cover singletons in this partition if and only if all the horizontal
  steps in the associated Motzkin path lie on the horizontal axis. Pick a partition $\pi\in
  \P_{1,2}^{sym}(n)$ and associate with it a path with $n$ steps by
  considering the set $\{\overline n,\dots, \overline 1\}$ and placing
  an up step in the place of left legs of  the negative pairs of $\pi$
  and down steps in the other points of $\{\overline n,\dots, \overline
  1\}$. Notice that there is a unique way for replacing some of the
  down steps by horizontal steps so that the resulting path is a
  Motzkin path with all the horizontal
  steps placed on the horizontal axis. Indeed, we successively change
  a down step into a horizontal step whenever we go below the
  horizontal axis. We consider the associated non-crossing partition of $\{\overline n,\dots, \overline 1\}$, and
  by using the symmetry it uniquely determines a partition $\hat\pi\in
  \NC_{1,2}([{\pm n}])\cap\P_{1,2}(n)$ which has all the desired
  properties. This finishes the proof.
\end{proof}

  \begin{definition}
\label{defipihat}
For the pairs $A$, $B$ of $\pi$, we say that $A$ is \emph{connected} with
$B$  if there exists a pair $C\in \hat \pi $ such that 
$$ A\cap C \neq \emptyset \text{ and }B\cap C \neq \emptyset .$$
We denote this equivalence relation by writing $A \sim B$ see \cref{fig:coneced}.
\end{definition}

\begin{remark}In the graphical presentation of the \cref{defipihat} we
  will usually draw  arcs of $\pi$ above the points and   $\hat \pi$ below  the points. 
From this definition, it also follows that $A \sim B\iff \overline A \sim \overline B$, which shortens the notation in the next definition. 
\end{remark}

\begin{figure}[h]
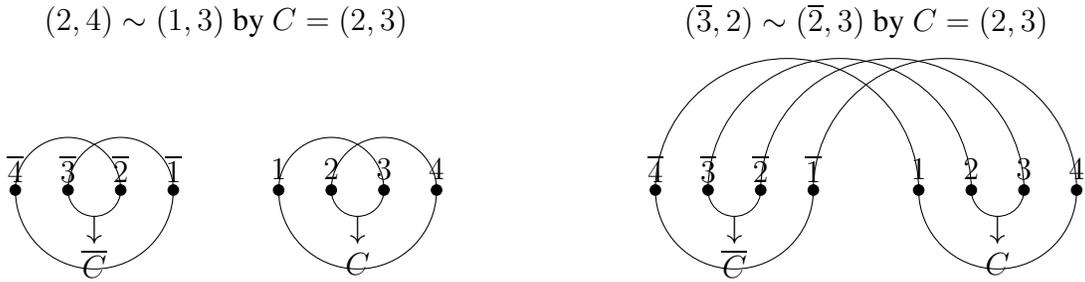

\begin{center}
  \MatchingMeandersab{4}{-4/-2, -3/-1,1/3,2/4 }{-4/-1,-3/-2,1/4,2/3} \hspace{5em} \MatchingMeandersabc{4}{-4/1, -3/2,-2/3,-1/4 }{-4/-1,-3/-2,2/3,1/4}
   \end{center}  
\caption{Two examples of connected pairs. The bottom non-crossing
  partitions are the associated $\hat\pi$.}
\label{fig:coneced}
\end{figure}


A \emph{cycle} in $\pi\in \P_{1,2}^{sym}(n)$ is a sequence of pairwise
distinct pairs of $\pi$ cyclically connected by $\hat\pi$, i.e.:
$$(l_1, r_1)\sim (l_2, r_2) \sim \dots \sim (l_{m-1} , r_{m-1})\sim ( l_m , r_m)\sim (l_1, r_1).$$
Note that due to the symmetric nature of the considered partitions we
distinguish two fundamentally different kinds of cycles:
\emph{positive} and \emph{negative}, which resemble the description
of the cycles in the hyperoctahedral group. 

\begin{definition}\label{deficykli}
Let $\pi$ and  
$\hat \pi$ be as in \cref{prop:pihat}. Suppose that
\[ (l_1, r_1)\sim (l_2, r_2) \sim \dots \sim (l_{m-1} , r_{m-1})\sim ( l_m , r_m)\sim (l_1, r_1)
\label{eq:cyclecondition} \tag{$\star$}
\]
is a cycle. There are two possibilities: either
\[ \overline{\{l_1,r_1\dots,l_m,r_m\}} \cap \{l_1,r_1\dots,l_m,r_m\}
  = \emptyset,\]
or
\[ \overline{\{l_1,r_1\dots,l_m,r_m\}} = \{l_1,r_1\dots,l_m,r_m\}.\]
In the first case, there exists the associated cycle
\[ (\overline{r_m},\overline{l_m})\sim (\overline{r_{m-1}},\overline{l_{m-1}}) \sim \dots \sim (\overline{r_1},\overline{l_1}) \sim (\overline{r_m},\overline{l_m})
\]
and we call the pair of cycles
$$\sigma = (l_1, r_1,\dots, l_m , r_m)(\overline{r_m},\overline{l_m},
\dots, \overline{r_1}, \overline{l_1})$$
a\emph{ positive cycle in
$\pi$} of length $m$. We will use the notation $\sigma=(l_1, r_1,\dots, l_m ,
r_m)^+$ (or $\sigma=(\overline  r_m,\overline l_m, \dots, \overline r_1, \overline
l_1)^+$) to indicate positivity. In the second case the parameter $m=2m'$
is necessarily even and due to symmetry this cycle has the
following form
\[ (l_1, r_1) \sim \dots \sim (l_{m'} ,
  r_{m'}) \sim ( \overline{r_1},\overline{l_1}) \sim \dots \sim ( \overline{r_{m'}},\overline{l_{m'}}) \sim  (l_1, r_1).
\label{eq:cyclecondition2} \tag{$\star\star$}
\] 
We call it a
\emph{negative cycle in
$\pi$ } of length $m'$ and we denote it $\sigma = (l_1, r_1,\dots,
l_{m'} , r_{m'})^-$. 
For a cycle $\sigma$ of $\pi$ (positive or negative) we denote its length by
$|\sigma|$ and by $\Cycle(\pi)$ the set of cycles of $\pi$.
$$\Lcycle(\pi)=\sum_{\sigma\in \Cycle(\pi)}(|\sigma|-1)$$
is the total length of  cycles of $\pi$ reduced by $1$.
Let $\SLNB(\pi)$ be the number of  \emph{ negative cycles of $\pi$}, see \cref{fig:examplesofcycle1}.

\begin{figure}[h]
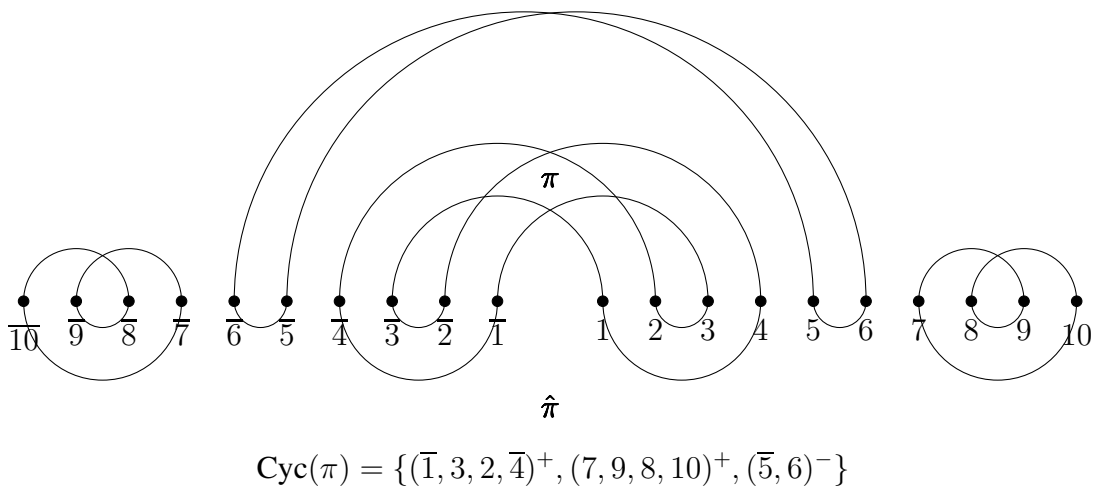

\begin{center}
  \MatchingMeanders{10}{-1/3, -3/1, -2/4 , -4/2 ,-6/5,-5/6,8/10,7/9,-9/-7,-10/-8 }{2/3,-3/-2, 1/4,-4/-1, 5/6,-6/-5, 8/9, 7/10, -9/-8, -10/-7 }
\begin{align*}
\Cycle(\pi)&=\{(\overline 1,3,2, \overline 4)^+,(7,9,8,10)^+,(\overline 5,6)^-\}  
\end{align*} 
   \end{center}  
\caption{An example of a partition $\pi\in \P_{2}^{sym}(10)$ with one
  negative and two
  positive cycles.}
\label{fig:examplesofcycle1}
\end{figure}

\end{definition}

\begin{remark}
1). In the case of a positive cycle  $\sigma = (l_1, r_1)(
\overline{r_1}, \overline{l_1})$ it should be understood that the pairs
$(l_1, r_1),( \overline{r_1}, \overline{l_1})$ lie in both partitions
$\pi$ and $\hat \pi.$

\noindent  2). Our definition of cycles in $\P_{2}^{sym}(n)$  is quite
similar to the definition of partitions of type B \cite[Section 2]{Reiner1997}, but in
our situation a \emph{zero block} (which is invariant under the bar operation) does not necessarily exist. 
\end{remark}

If some pairs are not connected cyclically, we call them \emph{semi-cycles}.

\begin{definition}
Let $\pi$ and  
$\hat \pi$ be as in \cref{defipihat}. Consider the set of pairs in
$\pi$, which are not connected cyclically. This set is partitioned
into chains of connected pairs:
\[ (l_1, r_1) \sim \dots \sim ( l_m , r_m) \not\sim ( l_1,r_1)\]
where $(l_1,r_1)$ is a negative pair, and by symmetry
\[ \overline  (r_1,\overline  l_1) \sim \dots \sim ( \overline  r_m,
  \overline l_m)\not \sim  (\overline r_1, \overline l_1).\]
We say that 
$$\sigma=\sigma^-\sigma^+ =\underbrace{(\overline c_m ,\overline r_m, \overline
  l_m,\dots, \overline r_{1} , \overline
  l_{1})^-}_{\sigma^-}\underbrace{(l_1, r_1,\dots, l_{m} ,
  r_{m},c_m)^+}_{\sigma^+}$$
is a \emph{semi-cycle} of length $m+1$
in $\pi$, where $(r_m, c_m),(\overline c_m, \overline r_m)\in \hat
\pi$ (they exist by \cref{prop:pihat}). Additionally, there are also semi-cycles $\sigma=(\overline k)^-(k)^+$ in $\pi$ of length
$1$ formed by singletons of $\pi$ (with the convention $k \in \{1,\dots,n\}$). 
Similarly as for the cycles we denote the length of a semi-cycle
$\sigma$ by $|\sigma|$, the set of semi-cycles by $\Scycle(\pi)$ and
we introduce the total length $\LScycle(\pi)$:
 $$\LScycle(\pi)=\sum_{\sigma \in \Scycle(\pi)}(|\sigma|-1).$$ 
See \cref{fig:examplesofcycle2} for an example of semi-cycles. The
points  $c_m$, $l_1$ $(\textrm{resp. } \overline c_m,\text{ }
\overline l_1)$ are called the \emph{edges} of the positive (and the
negative, respectively) parts of a semi-cycle $\sigma$ and they are denoted by 
$$
\ends (\sigma^f)= \left\{ \begin{array}{ll}
c_m & \textrm{if $f=+$}\\
\overline c_m & \textrm{if $f=-$}.
\end{array} \right.\,
\quad 
\maks (\sigma^f)= \left\{ \begin{array}{ll}
l_1 & \textrm{if $f=+$}\\
\overline l_{1} & \textrm{if $f=-$}.
\end{array} \right.
$$

\begin{figure}[h] 
\begin{center}
\MatchingMeanders{9}{-6/-2,2/6,-9/-3,3/9,-4/1, -1/4 }{-4/-3, 3/4, 8/9, -9/-8,-6/-5,5/6}
\begin{align*}
  \Scycle(\pi)&=\{(\overline 8,\overline 9,\overline 3,\overline 4,1)^-(\overline
              1,4,3,9,8)^+,(\overline 5,\overline 6,\overline 2)^-(2,6,5)^+,(\overline 7)^-(7)^+\}
\end{align*} 
\end{center} 
\caption{The partition $\pi$ has one semi-cycle $\sigma_1$ of length
  $3$ with the edges of the negative part $\ends(\sigma_1^-)=\overline
  8$, $\maks(\sigma_1^-)=1$, one semi-cycle $\sigma_2$ of length
  $2$ with the edges of the negative part $\ends(\sigma_2^-)=\overline
  5$, $\maks(\sigma_1^-)=\overline 2$, and one semi-cycle $\sigma_3$ of length
  $1$ with only one edge of the negative part
  $\maks(\sigma_3^-)=\overline 7$.}
\label{fig:examplesofcycle2}
\end{figure} 
\end{definition}

\subsection{Gaussian operator}
\label{subsec:Gaussian}
We construct generalized cyclic Gaussian operators of type B. They are given by
the creation and annihilation operators on a cyclic Fock space of type B.
We show that the distribution of these operators with
respect to the vacuum expectation is a generalized Gaussian distribution, in the
sense that all the moments can be calculated from the second moment by the explicit combinatorial formula. 
The operator $$\G(x)=\B(x\otimes x)+\B^\ast(x\otimes x),\quad x \in \HH_\R, $$
is called the \emph{cyclic Gaussian operator of type B}. Given
 $\epsilon=(\epsilon(1), \dots, \epsilon(n))\in\{1,\ast\}^{n}$, let
 $\P_{1,2;\epsilon}^{sym}(n)$ be the set of partitions $\pi\in
 \P_{1,2}^{sym}(n)$ such that
 \begin{itemize}
   \item if $(a,b)$ is a negative pair in
     $\pi$ then $\epsilon(|b|)= \ast,$ $\epsilon(|a|)=1$,
     \item if $\{c\}$ is a singleton in $\pi$ then $\epsilon( |c|)=\ast$. 
   \end{itemize}

\begin{theorem}
Let $x_{\overline i}\otimes  x_i\in \HH_\R$ for $1 \leq i \leq n$. Then 
\begin{align}\label{formula101}
\begin{split}
\B^{\epsilon(n)}(x_{\overline n}\otimes x_n)\dots \B^{\epsilon(1)}( x_{\overline 1}\otimes x_1)\Omega\otimes \Omega  =&\sum_{\pi\in \P_{1,2;\epsilon}^{sym}(n)}\s^{\SLNB(\pi)}\q^{\Lcycle(\pi)+\LScycle(\pi)} \prod_{\substack{\{i,j\} \in \Pair(\pi)} }\langle x_i, x_j\rangle
\\& \bigotimes_{\substack{   \sigma \in \Scycle(\pi)  }}\Bigg\{ x_{\ends(\sigma^\pm)}\Bigg\}_{\maks(\sigma^\pm)}.
\end{split}
\end{align}
\end{theorem}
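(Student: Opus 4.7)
The plan is to proceed by induction on $n$. The base case $n=0$ is immediate: both sides reduce to $\Omega\otimes\Omega$, since the right-hand side then sums only over the empty partition with weight $1$ and an empty tensor product.

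For the inductive step, I apply the operator $\B^{\epsilon(n+1)}(x_{\overline{n+1}} \otimes x_{n+1})$ to both sides of the formula at level $n$ and match the result against the formula at level $n+1$, splitting into two cases according to $\epsilon(n+1)$. If $\epsilon(n+1) = \ast$, the constraints in \cref{def:partycji} force $\overline{n+1}$ and $n+1$ to appear as singletons in every $\tilde\pi \in \P_{1,2;\epsilon'}^{sym}(n+1)$, because $\overline{n+1}$ is the minimum element and any pair containing it is necessarily negative, while a negative pair requires its left leg to satisfy $\epsilon = 1$. Consequently $\P_{1,2;\epsilon'}^{sym}(n+1)$ is in bijection with $\P_{1,2;\epsilon}^{sym}(n)$ via adjoining the two symmetric singletons $\{\overline{n+1}\}$, $\{n+1\}$, which together constitute a new semi-cycle of length $1$. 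The creation operator $\B^\ast = \r^\ast(x_{\overline{n+1}}\otimes x_{n+1})$ prepends $x_{\overline{n+1}}$ to the negative half and appends $x_{n+1}$ to the positive half of each summand, exactly matching the tensor contribution of this new semi-cycle, while leaving $\Lcycle$, $\LScycle$, and $\SLNB$ unchanged.

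If $\epsilon(n+1) = 1$, I invoke the decomposition $\B = \a_0 + \p_\q + \n_\s$ from \cref{ramerkdecoposition}. Every $\tilde\pi \in \P_{1,2;\epsilon'}^{sym}(n+1)$ contains a unique negative pair $(\overline{n+1}, r)$ with $r \in [\pm n]$ and $\epsilon(|r|) = \ast$, together with its symmetric partner $(\overline r, n+1)$. I would classify $\tilde\pi$ according to the role of $r$ in the residual partition $\pi$ obtained by deleting these two pairs, matching each class with one of the three summands of $\B$: $\a_0$ handles $r = \ends(\sigma_0^-)$ for the outermost semi-cycle $\sigma_0$ (a direct contraction with no extra weight), $\n_\s$ handles the twisted pairing $r = \ends(\sigma_0^+)$ which creates a new length-$1$ negative cycle contributing the factor $\s$ and $\SLNB += 1$, and $\p_\q$ handles every other intermediate choice of $r$, each contributing a factor $\q$ and corresponding to lengthening a different semi-cycle, merging two semi-cycles, or closing a semi-cycle into a cycle.

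The main obstacle will be the combinatorial bookkeeping in the $\p_\q$ case. One must exhibit an explicit bijection between the intermediate indices $j \in [\pm(n-1)]$ in the sum defining $\p_\q$ and the possible modifications of the semi-cycle/cycle structure, and verify that in each case the increment $(\Lcycle + \LScycle)(\tilde\pi) - (\Lcycle + \LScycle)(\pi)$ equals exactly $1$ while $\SLNB$ is unchanged. The straight-versus-twisted asymmetry between $j > 0$ and $j < 0$ has to be tracked in order to distinguish semi-cycle lengthenings from merges and cycle closures, and the updates to $\ends(\sigma^\pm)$ and $\maks(\sigma^\pm)$ must be checked to guarantee that the residual tensor factors on both sides of the identity agree. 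Once this combinatorial correspondence is verified, summing over all $\tilde\pi \in \P_{1,2;\epsilon'}^{sym}(n+1)$ reassembles the right-hand side at level $n+1$ and completes the induction.
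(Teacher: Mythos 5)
Your skeleton is the same as the paper's: induction on $n$, applying $\B^{\epsilon(n+1)}$ to the level-$n$ formula, splitting creation/annihilation, and in the annihilation case using the decomposition $\B=\a_0+\p_{\q}+\n_{\s}$ of \cref{ramerkdecoposition}; your creation case (the forced singletons at $\overline{n+1},n+1$) is correct and matches the paper's Case 1. The annihilation case, however, contains a concrete error: you posit a form-independent rule ``$r=\ends(\sigma_0^-)$ is handled by $\a_0$ and closes a positive cycle with no weight, $r=\ends(\sigma_0^+)$ is handled by $\n_\s$ and yields a negative cycle with weight $\s$''. This is only true when the outermost semi-cycle is oriented so that $x_{\ends(\sigma_0^-)}$ occupies the leftmost tensor slot, i.e.\ the paper's form \cref{eq:tensorproducta}. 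In the opposite orientation, form \cref{eq:tensorproductb}, the operator $\a_0$ contracts $x_{\overline{n+1}}$ against $x_{\ends(\Ci_p^+)}$ and \emph{still} produces a positive cycle, while $\n_\s$ pairs $\overline{n+1}$ with $\ends(\Ci_p^-)$ and produces the negative one — this is exactly why the paper needs the four Cases 2a)--2d) and both rows of \cref{fig:proofpositivecycle,fig:proofnegativecycle}. The sign of the created cycle is decided by straight-versus-crosswise contraction, equivalently by how the canonical $\hat{\tilde\pi}$ of \cref{prop:pihat} attaches $n+1,\overline{n+1}$ at the extreme positions $\maks(\Ci_p^\pm)$, not by the $\pm$ label of the edge paired with $\overline{n+1}$. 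Carried out as stated, your bookkeeping attaches the factor $\s$ and the increment of $\SLNB$ to partitions whose new cycle is actually positive (and conversely), so the induction does not close. A second error: the negative cycle created by $\n_\s$ has length $d_p$ (the length of the semi-cycle it closes), not length $1$; the correct accounting is $\Lcycle(\tilde\pi)=\Lcycle(\pi)+d_p-1$, $\LScycle(\tilde\pi)=\LScycle(\pi)-d_p+1$, leaving the $\q$-exponent fixed, whereas a ``length-$1$'' cycle would make the $\q$-exponent drop by $d_p-1$, contradicting the fact that $\n_\s$ carries no factor of $\q$.

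Beyond these errors, the heart of the argument is deferred rather than carried out: what you flag as ``the main obstacle'' — the bijection between the $\p_\q$-terms and modifications of the semi-cycle structure, with the verification that $\LScycle$ increases by exactly $1$ while $\Lcycle$ and $\SLNB$ are unchanged, and the matching of the surviving tensor factors — is precisely the substance of the paper's Cases 3a)--3b). (Note also that $\p_\q$ never ``closes a semi-cycle into a cycle'' as you suggest: each $\J_i$, $i\in[\pm(n-1)]$, merges one intermediate semi-cycle $\Ci_i$ with the outermost one $\Ci_p$ into a single longer semi-cycle, which is what produces the increment of $\LScycle$ by $1$; a closure would leave $\Lcycle+\LScycle$ unchanged and the weight $\q$ would be unaccounted for.) Finally, you never argue that the partition $\hat{\tilde\pi}$ canonically associated to $\tilde\pi$ is the one obtained from $\hat\pi$ by attaching $n+1,\overline{n+1}$ at the outermost occupied positions $\maks(\Ci_p^\pm)$; the paper needs exactly this observation to guarantee that $\hat{\tilde\pi}$ is non-crossing and does not cover singletons, and without it the cycle and semi-cycle statistics of $\tilde\pi$ are not even being computed with respect to the correct auxiliary partition.
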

\begin{remark}
1).  In the above formula we use the following bracket notation
$\{\circ\}_{\star}$. It should be understood that the position of
 $\circ $ (in the tensor product) is ordered with respect to the $\star$.
For
example, for the partition as in \cref{fig:examplesofcycle2}, we have
$$\bigotimes_{\substack{   \sigma \in \Scycle(\pi)  }}\Bigg\{ x_{\ends(\sigma^\pm)}\Bigg\}_{\maks(\sigma^\pm)}=x_{\overline 7 }\otimes x_{\overline 5 } \otimes x_{8 }\otimes  x_{\overline 8 }\otimes x_{5}\otimes x_{ 7 }.$$

2).
  We denote by $(A,B)$
the concatenation of the semi-cycles $A$ and  $B$. 
\end{remark}
\begin{proof}
The proof is by induction. When $n=1$,  $\B(x_{\overline
  1}\otimes x_1)\Omega\otimes\Omega =0$ and $\B^\ast(x_{\overline
  1}\otimes x_1)\Omega\otimes\Omega=x_{\overline 1}\otimes
x_1$. Suppose that the formula  \eqref{formula101} is true for $n=k-1$
and for any $\epsilon\in\{1,\ast\}^{k-1}$. 
  We will show that the action of $\B^{\epsilon(k)}(x_{\overline{ k}}\otimes x_{k})$ corresponds to the inductive pictorial description of set partitions, cycles and semi-cycles.  

From now on, we fix a partition $\pi\in \P_{1,2;\epsilon}^{sym}(k-1)$
(then the associated non-crossing $\hat \pi $  is automatically defined) and suppose that $\pi$ has 
the semi-cycles $\sigma^-_1\sigma^+_1,\dots,\sigma^-_i\sigma^+_i ,\dots, \sigma^-_p\sigma^+_p$,  with lengths $d_1,\dots  ,d_p$ and 
we assume that $\sigma^-_p\sigma^+_p$ is the most left-right semi-cycle i.e. $|\maks(\sigma^-_i)|=|\maks(\Ci^+_i)|< |\maks(\Ci^-_p)|=|\maks(\Ci^+_p)|$ for all $1 \leq i \leq p-1$.
In this situation, the  semi-cycles contribute to the tensor product 
\begin{align} \label{eq:tensorproducta}
& \{x_{\start(\Ci^-_p)}\}_{\maks(\Ci^-_p)}\otimes \dots \otimes \{x_{\start(\Ci^f_i)}\}_{\maks(\Ci^f_i)} \otimes   \dots \otimes \{x_{\start(\Ci^g_i)}\}_{\maks(\Ci^g_i)}\otimes \dots  \otimes\{ x_{\ends(\Ci^+_p)}\}_{\maks(\Ci^+_p)}, 
\\ \text{or} \nonumber
  \\ & \{x_{\start(\Ci^+_p)}\}_{\maks(\Ci^+_p)}\otimes \dots \otimes \{x_{\start(\Ci^f_i)}\}_{\maks(\Ci^f_i)} \otimes   \dots \otimes \{x_{\start(\Ci^g_i)}\}_{\maks(\Ci^g_i)}\otimes \dots  \otimes\{ x_{\ends(\Ci^-_p)}\}_{\maks(\Ci^-_p)}. 
   \label{eq:tensorproductb}
 \end{align}
 where $f=\pm $ and  $ g=-f$. 
For the
reader it is convenient to visualize equations
\eqref{eq:tensorproducta} and  \eqref{eq:tensorproductb} as the
following diagrams 
$$ \stackrel{\MatchingProofequation{6}{4/6,-6/-4,1/3, -3/-1 }{5/6,-6/-5,2/3,-3/-2}}{\eqref{eq:tensorproducta},\text{ } f=-}
\qquad
\stackrel{ \MatchingProofequation{6}{4/6,-6/-4,-3/2, -2/3 }{5/6,-6/-5,1/2,-2/-1}} {\eqref{eq:tensorproducta},\text{ }  f=+}\qquad
\stackrel{\MatchingProofequation{6}{-4/6,-6/4,1/3, -3/-1 }{5/6,-6/-5,2/3,-3/-2}}{\eqref{eq:tensorproductb},\text{ }  f=-} 
\qquad
\stackrel{\MatchingProofequation{6}{-4/6,-6/4,-3/2, -2/3 }{5/6,-6/-5,1/2,-2/-1}}{\eqref{eq:tensorproductb},\text{ }  f=+} .$$ 
We understand that $p=0$ when there is no semi-cycle. 
We will show that the action of $\B^{\epsilon(k)}(x_{\overline{ k}}\otimes x_{k})$ corresponds to the inductive graphic description of set partitions and corresponding cycles semi-cycles. During this step, we create the new partition $\tyldapi \in \P^{sym}_{1,2;\epsilon}(k)$ and $ \hat \tyldapi \in \NC_{1,2;\epsilon}([\pm k] )$. 
Note that when there are no  semi-cycles, the arguments below can be modified easily.

Case 1. If $\epsilon(k)=\ast$, then the operator $\r^\ast(x_{\overline{ k}}\otimes x_{k})$ acts on the tensor product \eqref{eq:tensorproducta} or \eqref{eq:tensorproductb}, putting $x_{k}$  on the right and $ x_{\overline{k}}$ on the left. This operation pictorially corresponds to adding the semi-cycle $(\overline{k})^-(k)^+$  to $\pi\in\P^{sym}_{1,2;\epsilon}(k-1)$, with length one.  This map $\pi\mapsto  \tyldapi$ does not change the numbers $\SLNB, $ $\Lcycle$ or $\LScycle$, which is compatible with the fact that the action of $\r^\ast(x_{\overline{ k}}\otimes x_{k})$ does not change the coefficient. 

Case 2. Now we apply the operator  $\r(x_{\overline{ k}}\otimes x_{k})$ on
\eqref{eq:tensorproducta} and \eqref{eq:tensorproductb} with
$f=\pm$. In the figures below we will illustrate our proof with
$f=-$  because the case $f=+$  does not introduce any new difficulty
and is fully analogous.

Consider the action of $\a_0(x_{\overline{ k}}\otimes x_{k})$
and  $\n_{\s}(x_{\overline{ k}}\otimes x_{k})$. There are four cases
 during which we create cycles. 

Case 2a). Suppose that $\a_0(x_{\overline{ k}}\otimes x_{k})$ acts on the semi-cycle appearing at the edges vectors of the tensor product \eqref{eq:tensorproducta} by using \eqref{p0}, which gives us   the inner product $\langle x_{k},x_{\ends(\Ci^+_p)}\rangle\langle  x_{\overline{k}},x_{\start(\Ci^-_p)}\rangle  $.  Pictorially this corresponds to getting a set partition $\tyldapi \in \P^{sym}_{1,2;\epsilon}(k)$  by adding $\overline{k}$ and $k$ to $\pi$ and
adding the pairs 
$$(\ends(\Ci^+_p), k),(\overline{k},\start(\Ci^-_p))\text{ to }\pi.$$
This results in adding the pairs
$$(\maks(\Ci^+_p), k),(\overline{k},\maks(\Ci^-_p))\text{ to }\hat
{\pi}$$
and creating the positive cycle $(\Ci^+_p,k)^+$.
Note that the above pairs create a closed cycle -- see \cref{fig:proofpositivecycle} $(a)$. 
  We also remove the semi-cycle $\Ci^-_p\Ci^+_p.$ 
 This new cycle has length $d_p$ and so increases the $\Lcycle(\tyldapi)$  by $d_p-1$ and decreases the  length of semi-cycle by $d_p$ because originally $\Ci^-_p\Ci^+_p$ was the semi-cycle of $\pi$. 
Altogether we have: $\Lcycle(\tyldapi)=\Lcycle(\pi)+d_p-1$,
$\LScycle(\tyldapi)=\LScycle(\pi)-d_p+1$ and  $\SLNB(\tyldapi)=
\SLNB(\pi)$. So the exponent of $\q$ and $\s$ do not change, which is
compatible with  the action \eqref{p0}  on the tensor \eqref{eq:tensorproducta}.

\begin{figure}[h]
\begin{center}
\MatchingProof{3}{1/3, -3/-1 }{2/3,-3/-2}  \MatchingProoff{4}{1/3, -3/-1, -4/-2,2/4 }{-4/-1, 2/3,-3/-2,1/4} 
\end{center}
\begin{center}
\MatchingProofd{3}{-1/3, -3/1 }{2/3,-3/-2}  \MatchingProoffd{4}{-1/3, -3/1, -2/4,-4/2 }{-4/-1, 2/3,-3/-2,1/4} 
\end{center}
\caption{The visualization of the action of $\a_0(x_{\overline k} \otimes x_{k})$}
\label{fig:proofpositivecycle}
\end{figure}

Case 2b).  In the next step, we have the analogous situation, with the case 2a) because then $\a_0(x_{\overline{ k}}\otimes x_{k})$ acts on the tensor product \eqref{eq:tensorproductb}, which gives us the\sloppy inner product $\langle x_{k},x_{\ends(\Ci^-_p)}\rangle\langle  x_{\overline{k}},x_{\start(\Ci^+_p)}\rangle  $ and we proceed as shown in the \cref{fig:proofpositivecycle} $(b)$ by creating the positive cycle $(\overline{k}, \Ci^+_p)^+$.

Case 2c). Suppose that $\n_{\s}(x_{\overline{ k}}\otimes x_{k})$ acts on the tensor product \eqref{eq:tensorproducta} by using \eqref{nt}, which gives us   the inner product $\langle x_{k}, x_{\start(\Ci^-_p)}\rangle \langle  x_{\overline{k}}, x_{\ends(\Ci^+_p)}\rangle $ with coefficient $\s$. We  create the negative cycle $(\overline{k},\Ci^+_p)^-$ by adding  the pairs
$$(\start (\Ci^-_p), k),(\overline{k},\ends(\Ci^+_p))\text{ to }\pi\text{  and }(\maks(\Ci^+_p), k),(\overline{k},\maks(\Ci^-_p))\text{ to }\hat {\pi},$$ 
which create a closed cycle -- see  \cref{fig:proofnegativecycle} (a).
Next, we count the change of the exponent of $\q$ similarly as in Case 2a,  and get 
 $\Lcycle(\tyldapi)=\Lcycle(\pi)+d_p-1$,
 $\LScycle(\tyldapi)=\LScycle(\pi)-d_p+1$. Moreover, the new negative cycle appears so that $\SLNB(\tyldapi)= \SLNB(\pi)+1$. Thus the exponent of $\s$  increases by $1$ which agrees with the action \eqref{nt}. 

\begin{figure}[h]
\begin{center}
\MatchingProof{3}{1/3, -3/-1 }{2/3,-3/-2}  \MatchingProofff{4}{1/3, -3/-1, -4/2,-2/4 }{-4/-1, 2/3,-3/-2,1/4} 
\end{center}
\begin{center}
\MatchingProofd{3}{-1/3, -3/1 }{2/3,-3/-2}  \MatchingProoffdd{4}{-1/3, -3/1, -4/-2,2/4 }{-4/-1, 2/3,-3/-2,1/4} 
\end{center}
\caption{The visualization of the action of $\n_{\s}(x_{k})$}
\label{fig:proofnegativecycle}
\end{figure}

Case 2d). Suppose that $\n_{\s}(x_{\overline{ k}}\otimes x_{k})$ acts on the tensor
product \eqref{eq:tensorproductb}, which gives us    $\langle
x_{k},x_{\ends(\Ci^+_p)}\rangle\langle
x_{\overline{k}},x_{\start(\Ci^-_p)}\rangle  $ with coefficient
$\s$. This situation is fully analogous to the Case 2c) and we proceed
as shown in \cref{fig:proofnegativecycle} $(b)$ by creating the
negative cycle $ (\overline k,\Ci^-_p)^-$.

\vspace{10pt}

Case 3. Finally we consider the action of $\p_{\q}(x_{\overline{
    k}}\otimes x_{k})$, which creates semi-cycles.
In this case we have two situations but similarly as before, they are
very similar to each other and we describe in details only the first one.

Case 3a).
If $\p_{\q}(x_{\overline{ k}}\otimes x_{k})$ acts on the tensor
product \eqref{eq:tensorproducta} then there are $2(p-1)$ new terms
which appear in \eqref{pq}. 
The action on the $i^{\rm th}$  term in the tensor product \eqref{eq:tensorproducta}, for $1\leq i\leq p-1$ contributes to the
 \begin{enumerate}[label=(\alph*)]
\item inner product   $\langle x_{k},x_{\ends(\Ci^+_i)}\rangle\langle
  x_{\overline{k}},x_{\ends(\Ci^-_i)}\rangle$ with coefficient ${\q}$
  if we apply the action of $\q \J_i$;
\item inner product  $\langle x_{k},x_{\ends(\Ci^-_i)}\rangle\langle
  x_{\overline{k}},x_{\ends(\Ci^+_i)}\rangle$ with coefficient $\q$ if
  we apply the action of $\q \J_{\overline i}$; 
\end{enumerate}
 Pictorially this corresponds to getting a set partition $\tyldapi \in \P^{sym}_{1,2;\epsilon}(k)$  by adding $\overline{k}$ and $k$  to $\pi$ and creating the pairs 
 \begin{enumerate}[label=(\alph*)]
\item $(\ends(\Ci_i^+), k)$ and  $(\overline{k},\ends(\Ci_i^-))$ in $\tyldapi$;  see \cref{fig:proofpositivesemicycle} $(a)$;
\item $(\ends(\Ci_i^-), k)$ and  $(\overline{k},\ends(\Ci_i^+))$ in $\tyldapi$; see \cref{fig:proofpositivesemicycle} $(b)$.
\end{enumerate} 
In the above situations we also add the pairs $(\maks(\Ci_p^+), k)$ and $(\overline{k},\maks(\Ci_p^-))$ to $\hat {\pi}$. 
 These new pairs create a semi-cycle  by connecting $\Ci_i^{\pm}$ and $\Ci_p^{\pm}$ through $k$ or $\overline k$  as in  \cref{fig:proofpositivesemicycle} $(a)$, $(b)$. More precisely the new semi-cycle is $\tilde{\Ci}^-\tilde{\Ci}^+$, where  
 \begin{enumerate}[label=(\alph*)]
\item $\tilde{\Ci}^-=(\Ci_p^-,\overline{k} ,\Ci^-_i)$  and $\tilde{\Ci}^+=(\Ci_i^+, k ,\Ci^+_p)$ with  $\maks(\tilde {\Ci}^\pm)= \maks(\Ci^\pm_i)$ and $\start(\tilde{\Ci}^\pm)=\start(\Ci^\pm_p)$;
\item $\tilde{\Ci}^-=(\Ci_p^-,\overline{k} ,\Ci^+_i)$  and $\tilde{\Ci}^+=(\Ci_i^-, k ,\Ci^+_p)$ with  $\maks(\tilde {\Ci}^\pm)= \maks(\Ci^\mp_i)$ and $\start(\tilde{\Ci}^\pm)=\start(\Ci^\pm_p)$;
\end{enumerate}
We can analyze the change in the statistic generated by the new semi-cycle
analogously to the previous cases and we see that the length of the new semi-cycle is $
\tilde{d}_i=
{d}_i + d_p$, which gives $\LScycle(\tyldapi)=\LScycle(\pi)+1$. This
is because we remove semi-cycles $\Ci^-_i\Ci^+_i$ and $\Ci_p^-
\Ci_p^+$ from $\pi$, which have a contribution $
d_i+d_p-2$ to $\LScycle(\pi)$, and replace them by one semi-cycle
which has contribution $d_i+d_p-1$ to $\LScycle(\tyldapi)$. The
other statistics remain unchanged  $\Lcycle(\tyldapi)=\Lcycle(\pi)$,
$\SLNB(\tyldapi)= \SLNB(\pi)$, so the total change of the coefficient
appears only in increasing the exponent of $\q$ by $1$ (see
\cref{fig:proofpositivesemicycle} $(a)$ and $(b)$). This agrees with the action \eqref{pq}. 
  

\begin{figure}
\begin{center}
 \MatchingProofp{6}{4/6,-6/-4,1/3, -3/-1 }{5/6,-6/-5,2/3,-3/-2} \MatchingProoffpp{7}{-7/-2,2/7, 4/6,-6/-4,1/3, -3/-1  }{-7/-4, 4/7, 5/6,-6/-5,2/3,-3/-2} 
\end{center}
\begin{center}
\MatchingProofp{6}{4/6,-6/-4,1/3, -3/-1 }{5/6,-6/-5,2/3,-3/-2}  \MatchingProoffppb{7}{-7/2,-2/7, 4/6,-6/-4,1/3, -3/-1  }{-7/-4, 4/7, 5/6,-6/-5,2/3,-3/-2} 
\end{center}
\caption{The visualization of the action of $\p_{\q}(x_{k})$ on  \eqref{eq:tensorproducta}}
\label{fig:proofpositivesemicycle}
\end{figure}
 
 Case 3b). In the last situation we apply equation \eqref{pq} to
 \eqref{eq:tensorproductb} and we proceed as in
 \cref{fig:proofpositivesemicycleb} $(a)$, $(b)$.  
\begin{figure}
\begin{center}
\MatchingProofp{6}{-4/6,-6/4,1/3, -3/-1 }{5/6,-6/-5,2/3,-3/-2}  \MatchingProoffppc{7}{-7/2,-2/7, -4/6,-6/4,1/3, -3/-1 }{-7/-4, 4/7, 5/6,-6/-5,2/3,-3/-2} 
\end{center}
\begin{center}
\MatchingProofp{6}{-4/6,-6/4,1/3, -3/-1 }{5/6,-6/-5,2/3,-3/-2}  \MatchingProoffppd{7}{-7/-2,2/7, -4/6,-6/4,1/3, -3/-1 }{-7/-4, 4/7, 5/6,-6/-5,2/3,-3/-2} 
\end{center}
\caption{The visualization of the action of $\p_{\q}(x_{k})$ on  \eqref{eq:tensorproductb}}
\label{fig:proofpositivesemicycleb}
\end{figure}

We emphasize that in all of the above steps  $\maks(\Ci^\pm_p) $  is the most right or left point representing vector in the tensor product \eqref{eq:tensorproducta} and \eqref{eq:tensorproductb}.  By using this point we create  the partition $\hat \tyldapi$.  This explains why $\hat \tyldapi$  is non-crossing and don't cover singletons. 

Note that as $\pi$ runs over $\P^{sym}_{1,2;\epsilon}(k-1)$, every set
partition $\tyldapi \in \P^{sym}_{1,2;\epsilon}(k)$ appears exactly
once in one of the cases 2 or 3. Therefore in Case 2 and Case 3, the
pictorial inductive step and the actual action of $\B(x_{\overline
  k}\otimes x_{k})$ both create the same terms with the same
coefficients, and hence the formula \eqref{formula101} is true when
$n=k$ and $\epsilon(k)=1$. Therefore the formula \eqref{formula101}
holds for all $n \in \N$ by induction, which finishes the proof.
\end{proof}
 When equipped with the vacuum expectation state $\state(\cdot)= \langle\Omega,\cdot\, \Omega\rangle_{\q,\s} $ the moments of
the Gaussian operator can be expressed as cycles of type B.
\begin{theorem}\label{thm2} 
Suppose that $x_1,\dots,x_{2n} \in H_\R$, then  
\begin{align*}
\state( \G(x_{2n})\dots \G(x_1))=&\sum_{\pi\in \P_{2}^{sym}(2n)}\s^{\SLNB(\pi)}\q^{\Lcycle(\pi)} \prod_{\substack{\{i,j\} \in \Pair(\pi)} }\langle x_i, x_j\rangle
\\=&\sum_{\pi\in \P_{2}^{sym}(2n)}\s^{\SLNB(\pi)}\q^{n-\cyc(\pi)} \prod_{\substack{\{i,j\} \in \Pair(\pi)} }\langle x_i, x_j\rangle, 
\end{align*}
where $\cyc(\pi)$ is the number of cycles of $\pi$.
\end{theorem}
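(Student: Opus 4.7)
The plan is to apply formula \eqref{formula101} to the expanded Gaussian product and extract the vacuum-to-vacuum matrix element. Distributing $\G(x_i) = \B(x_i \otimes x_i) + \B^*(x_i \otimes x_i)$ yields
\[ \G(x_{2n})\cdots\G(x_1) = \sum_{\epsilon \in \{1,*\}^{2n}} \B^{\epsilon(2n)}(x_{2n}\otimes x_{2n})\cdots \B^{\epsilon(1)}(x_1\otimes x_1). \]
Applying each summand to $\Omega \otimes \Omega$ via \eqref{formula101} and pairing against $\Omega \otimes \Omega$, the inner product $\langle \Omega \otimes \Omega, \cdot\, \rangle_{\q,\s}$ selects only those partitions $\pi$ whose tensor-product term $\bigotimes_{\sigma} \{x_{\ends(\sigma^\pm)}\}_{\maks(\sigma^\pm)}$ is empty, since $\langle \cdot, \cdot \rangle_{\q,\s}$ vanishes across different tensor degrees. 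An empty tensor-product term is equivalent to $\Scycle(\pi) = \emptyset$, which forces $\LScycle(\pi) = 0$ and rules out singletons in $\pi$ (a singleton produces a length-$1$ semi-cycle $(\overline k)^-(k)^+$); hence $\pi \in \P_2^{sym}(2n)$.

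The key structural lemma is that every $\pi \in \P_2^{sym}(2n)$ automatically satisfies $\Scycle(\pi) = \emptyset$, so the double sum over $\epsilon$ and over qualifying $\pi$ collapses to a single sum over $\P_2^{sym}(2n)$. I would establish this in two steps. First, the non-crossing partition $\hat\pi$ from \cref{prop:pihat} has no singletons when $\pi$ is a pair partition: in the Motzkin-path construction from the proof of \cref{prop:pihat}, singletons of $\hat\pi$ are horizontal steps inserted exactly when the raw up/down path on $\{\overline{2n},\dots,\overline 1\}$ dips below zero. A prefix count on $S_k := \{\overline{2n}, \dots, \overline k\}$ shows this never happens: each down step contributed by the left leg $\overline j$ of a positive pair $(\overline j, b)$ with $0 < j < b$ is compensated by the up step contributed by the left leg $\overline b$ of its bar partner $(\overline b, j)$, which is a negative pair and which enters $S_k$ no later than $\overline j$ (since $b > j$, so $\overline b$ lies to the left of $\overline j$ in the traversal).

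Once $\hat\pi$ has no singletons, every pair $B \in \pi$ has exactly two $\hat\pi$-neighbors (one per leg), so the graph on pairs of $\pi$ defined by $\sim$ in \cref{defipihat} is $2$-regular; every connected component is a cycle, confirming $\Scycle(\pi) = \emptyset$. It remains to verify that for each $\pi \in \P_2^{sym}(2n)$ the constraints defining $\P_{1,2;\epsilon}^{sym}(2n)$ pin down a unique $\epsilon$: the assignment $\epsilon(|b|) = *$, $\epsilon(|a|) = 1$ for every negative pair $(a,b) \in \pi$ is consistent because the bar operation interchanges positive and negative pairs, so the $n$ negative pairs of $\pi$ fix $\epsilon$ at all $2n$ indices without conflict.

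Assembling these ingredients with $\LScycle(\pi) = 0$ yields the first equality. The second follows from the identity $\Lcycle(\pi) = n - \cyc(\pi)$: a positive cycle of length $m$ consists of $2m$ pairs of $\pi$ and a negative cycle of length $m'$ consists of $2m'$ pairs, so $\sum_{\sigma \in \Cycle(\pi)} 2|\sigma| = 2n$, whence $\sum_\sigma |\sigma| = n$ and $\Lcycle(\pi) = \sum_\sigma(|\sigma|-1) = n - \cyc(\pi)$. The main obstacle is the structural lemma that $\hat\pi$ has no singletons for every pair partition $\pi$; once that Dyck-path count is in hand, the remainder is an essentially mechanical comparison between the vacuum-expectation expansion of \eqref{formula101} and the target formula.
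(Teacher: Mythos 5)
Your proof takes essentially the same route as the paper's. The paper's entire argument for the first equality is that one applies \eqref{formula101}, sums over all $\epsilon\in\{1,\ast\}^{2n}$, and keeps only the partitions with $\Scycle(\pi)=\emptyset$ (those whose tensor factor is the vacuum), and its argument for the second equality is the same count $\sum_{\sigma\in\Cycle(\pi)}|\sigma|=n$ that you give. What you add is a proof of the two combinatorial facts the paper treats as immediate: that the surviving partitions are exactly the pair partitions in $\P_{2}^{sym}(2n)$, each arising for a unique $\epsilon$. Your uniqueness-of-$\epsilon$ argument is correct (the bar operation exchanges positive and negative pairs, so the $n$ negative pairs constrain all $2n$ indices exactly once, with no conflicts), and so is your reduction of $\Scycle(\pi)=\emptyset$ to the statement that $\hat\pi$ has no singletons, via $2$-regularity of the connection graph coming from \cref{defipihat}.

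The one genuine (though repairable) gap is in your Dyck-path proof of that no-singleton statement. In the negative half $\{\overline{2n},\dots,\overline{1}\}$, down steps occur at two kinds of points: left legs of straddling positive pairs $(\overline{j},b)$ with $b>j>0$, which is the only case you treat, but also right legs of negative pairs $(\overline{b},\overline{j})$ whose two legs are both negative. For instance, if $\pi$ contains the pairs $(\overline{2},\overline{1})$ and $(1,2)$, then $\overline{1}$ is a down step of the second kind and your compensation rule says nothing about it, so as written the claim that ``each down step is contributed by the left leg of a positive pair'' is false. The missing case is handled by the same idea: the down step at the right leg $\overline{j}$ is compensated by the up step at $\overline{b}$, the left leg of the \emph{same} pair, which lies to its left. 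For the prefix count one should also note that the combined compensation map (down steps to up steps) is injective: your rule targets left legs of straddling negative pairs, the missing rule targets left legs of both-negative pairs, and these two sets of up steps are disjoint. With those two additions the prefix inequality holds for every $S_k$, the raw path is a Dyck path, $\hat\pi$ has no singletons, and the rest of your argument goes through.
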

\begin{proof}
The first equation is a direct consequence of \eqref{formula101} by
taking the sum over all $\epsilon\in\{1,\ast\}^{2n}$ and $
\Scycle(\pi) =\emptyset$. The second one  follows from the observation
that  $\Lcycle(\pi)=|\pi|-\cyc(\pi)$. Indeed, the length of each cycle
can be represented as the number of pairs which create it, and so $$\Lcycle(\pi)=\sum_{\sigma\in \Cycle(\pi)}(|\sigma|-1)=\sum_{\sigma\in \Cycle(\pi)}|\sigma|-\cyc(\pi)=n-\cyc(\pi).$$
\end{proof}

\subsection{Orthogonal polynomials and bivariate generating functions}
\label{subsec:orthogonal}

\subsubsection{Orthogonal polynomials}
For a probability measure $\mu$ with finite moments of all orders, let us orthogonalize the sequence $(1,x,x^2,x^3,\dots)$ in the Hilbert space $L^2(\R,\mu)$, following the Gram-Schmidt method. This procedure yields orthogonal polynomials $(P_n(t))_{n=0}^\infty$  
with $\text{deg}\, P_n(x) =n$. Multiplying by constants, we take $P_n(x)$ to be monic, i.e., the coefficient of $x^n$ is 1. It is known that they satisfy a recurrence relation
\begin{align} \label{wielortogonalnerekursia}
x P_n(x) = P_{n+1}(x) +\beta_n P_n(x) + \gamma_{n-1} P_{n-1}(x),\qquad n =0,1,2,\dots
\end{align}
with the convention that $P_{-1}(x)=0$. The coefficients $\beta_n$ and $\gamma_n$ are called \emph{Jacobi parameters} and they satisfy $\beta_n \in \R$ and $\gamma_n \geq 0$. 
It is known that 
\begin{equation}\label{eq54}
\gamma_0 \cdots \gamma_n=\int_{\R}|P_{n+1}(x)|^2\mu(d x),\qquad n \geq 0.
\end{equation} 
Moreover, the measure $\mu$ has a finite support of cardinality $N$ if and only if $\gamma_{n}=0$ for $n \geq N-1$ and $\gamma_n > 0$ for $n = 0,\dots, N-2$
-- see \cite{Chihara}. 


Let us recall here  that for any $c \in (-1, \infty)$ the Askey-Wimp-Kerov distribution $\nu_c$ (see e.g.~\cite{AskeyWimp1984} or \cite[Section 8.4]{Kerov1998}) is the measure on $\R$, with Lebesgue density
\begin{align*}
&\frac{1}{\sqrt{2\pi }\Gamma(c+1)}|D_{-c}(i x)|^{-2} \qquad x\in \R, 
\intertext{where $D_{-c}(z)$ is the solution to the differential Weber equation:}
 & \frac{d^2y}{dz^2}+\left(\frac{1}{2}-c-\frac{z^2}{4}\right)y=0,
\intertext{satisfying the initial conditions:}
& D_{-c}(0)=\frac{\Gamma\left(\frac{1}{2}\right)2^{-c/2}}{\Gamma\left(\frac{1+c}{2}\right)} \text{ and } D'_{-c}(0)=\frac{\Gamma\left(-\frac{1}{2}\right)2^{-(c+1)/2}}{\Gamma\left(\frac{c}{2}\right)}. 
\intertext{When $c > 0$, the solution $D_{-c}$ has the integral representation}
&D_{-c}(z)=\frac{e^{-z^2/4}}{\Gamma(c)}\int_0^\infty e^{-xz}x^{c-1}e^{-x^2/2}dx.
\end{align*}
It was proved in \cite{AskeyWimp1984} that for any $c \in (-1, \infty)$ the measure $\nu_c$ is a probability measure.
The family
$(\nu_c)_{c\in(-1,\infty)}$ can be extended continuously at $c=-1$ by defining $\nu_{-1}$ to be the Dirac point mass
$\delta_0$ at $0$.
The orthogonal polynomials $(H_n(t))_{n=0}^\infty$, with respect to
$\nu_c$ are called \emph{the associated Hermite polynomials} and given by the recurrence
relation:
$$t H_n(t) = H_{n+1}(t) +(n+c){H}_{n-1}(t), \qquad n=0,1,2,\dots $$
with ${H}_{-1}(t)=0,$ ${H}_0(t)=1.$
 \begin{remark}
 For any $c\in [-1,0]$ the Askey–Wimp–Kerov distribution $\nu_c$ is freely infinitely divisible and freely selfdecomposable -- see \cite[Theorem 3.1]{BelinschiBozejkoLehnerSpeicher2011} and \cite[Theorem 3.3]{HasebeSakumaSteen2019}.
\end{remark}

Let $({Q}_n(t))_{n=0}^\infty$ be the family of orthogonal polynomials with the recursion relation
${Q}_{-1}(t)=0,$ ${Q}_0(t)=1$ and 
\begin{equation}\label{recursion2}
t Q_n(t) = Q_{n+1}(t) +\lambda(n){Q}_{n-1}(t), \qquad n=0,1,2,\dots
\end{equation}
where 
$$\lambda(n)=1 + 2(n-1) \q +\s
=2\q(n+c),  \quad c=\frac{1+\s}{2\q}-1.$$
 Let
$\qMP_{\q,\s}$  be a probability measure associated with the orthogonal polynomials $Q_n(t)$. 
If $\q>0$, then  $c= M-1\geq -1$, since $\q=\frac{1}{M+N}$ and $\s=\frac{M-N}{M+N}.$
The above information leads to the conclusion that 
  the measure $\qMP_{\q,\s}$  is the dilatation by $\sqrt{2\q}$ (for $\q>0$)
of the  Askey-Wimp-Kerov distribution  $\nu_{\frac{1+\s}{2\q}-1}$, with Lebesgue density  
$$\frac{1}{2\sqrt{\q\pi }\Gamma\big(\frac{1+\s}{2\q}\big)|D_{-\frac{1+\s}{2\q}+1}\big(\frac{i x}{\sqrt{2\q}}\big)|^2}, \qquad x\in \R.$$

In the case $\q=0$, we obtain a recursion 
of the orthogonal polynomials for the semi-circle
distribution dilated by  $\sqrt{1+\s}$ i.e. $\mu_{0,\s}$ i.e. 
$$\frac{1}{2\pi (1+\s)}\sqrt{4(1+\s)-x^2} , $$ 
for $|x| \leq 2\sqrt{1+\s}.$
If $\q<0$, then by  \cref{subsec:Exclusion} we have  $\lambda(n)=1 + 2(n-1) \q +\s > 0$  for $n\leq M$ and $\lambda(n)= 0$ for $n >  M$, thus  the measure is discrete. 

\begin{theorem} \label{rozkladgaussa}  Let    $x\otimes x \in \HH_\R$ and  $ \|x\otimes x\|=1$. Let $\m$ be the probability distribution of $\G(x)$, with respect to the vacuum
state. Then $\m$ is equal to $\qMP_{\q,\s}.$
\end{theorem}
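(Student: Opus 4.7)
The strategy is to exhibit an explicit orthogonal basis of the subspace of $\mathcal{F}_{\q,\s}(\HH)$ cyclically generated by $x\otimes x$ on which $\G(x)$ acts by a three-term Jacobi operator whose parameters match the recursion \eqref{recursion2}. Once that is done, the distribution of $\G(x)$ with respect to the vacuum $\Omega\otimes\Omega$ is obtained from the classical Favard/spectral correspondence between Jacobi sequences, monic orthogonal polynomials and probability measures on $\R$.

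Concretely, set $e_n := (x\otimes x)^{\otimes n} \in \HH_n$, with $e_0 := \Omega\otimes\Omega$. Since $\langle\cdot,\cdot\rangle_{\q,\s}$ preserves the grading, the vectors $(e_n)_{n\geq 0}$ are pairwise orthogonal. The norm computation carried out in \cref{subsec:Exclusion} already gives the recursion
\[ \|e_n\|_{\q,\s}^2 = \lambda(n)\,\|e_{n-1}\|_{\q,\s}^2, \qquad \lambda(n)=1+2(n-1)\q+\s, \]
and hence $\|e_n\|_{\q,\s}^2 = \lambda(1)\cdots\lambda(n)$. The first key step is then to compute $\G(x) e_n$. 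The creation part is immediate from the definition of $\B^\ast$:
\[ \B^\ast(x\otimes x)\, e_n = r^\ast(x\otimes x)\, e_n = e_{n+1}. \]
For the annihilation part I use \cref{prop2}, namely $\B(x\otimes x)=r(x\otimes x)\, R^{(n)}_{\q,\s}$ on $\H$, together with the elementary observation that $\sigma\cdot e_n = e_n$ for every $\sigma\in B(n)$ (since all the tensor factors of $e_n$ coincide). Thus
\[ R^{(n)}_{\q,\s}\, e_n = \bigl(1+\s+2(n-1)\q\bigr)\, e_n = \lambda(n)\, e_n, \]
where the coefficient $2(n-1)$ counts the $2(n-1)$ transpositions appearing in $J^+_n$. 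Applying $r(x\otimes x)$ and using $\|x\|=1$ yields $\B(x\otimes x)\, e_n = \lambda(n)\, e_{n-1}$.

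Combining the two identities gives the recursion
\[ \G(x)\, e_n = e_{n+1} + \lambda(n)\, e_{n-1}, \qquad n\geq 0, \]
which is formally identical to \eqref{recursion2}. Therefore the map $Q_n(t)\mapsto e_n$ extends to a unitary isomorphism between $L^2(\R,\qMP_{\q,\s})$ and the closed linear span of $(e_n)_{n\geq 0}$ in $\mathcal{F}_{\q,\s}(\HH)$ which intertwines multiplication by $t$ with the self-adjoint operator $\G(x)$ and sends the constant polynomial $1$ to $\Omega\otimes\Omega$. Consequently, for every polynomial $p$,
\[ \state\bigl(p(\G(x))\bigr) = \langle \Omega\otimes\Omega,\, p(\G(x))\,\Omega\otimes\Omega\rangle_{\q,\s} = \int_{\R} p(t)\, d\qMP_{\q,\s}(t), \]
which identifies the vacuum distribution of $\G(x)$ with $\qMP_{\q,\s}$.

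The only genuine point of care is the boundary case $\q<0$, where the exclusion principle of \cref{subsec:Exclusion} forces $e_n=0$ for $n>M$ and the Jacobi sequence terminates with $\lambda(M+1)=0$; in this regime $\qMP_{\q,\s}$ is a finitely supported measure and the same isomorphism argument goes through with a finite orthogonal basis, so no separate treatment is required. The moment formula of \cref{thm2}, obtained independently from the combinatorics of symmetric pair-partitions, provides a consistency check: expanding $\state(\G(x)^{2n})$ by that formula must reproduce the $2n$-th moment of $\qMP_{\q,\s}$, and this could serve as an alternative route via a bijective interpretation of the Jacobi recursion.
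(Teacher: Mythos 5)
Your computational core coincides with the paper's proof: you use the same decomposition $P^{(n)}_{\q,\s}=(\id\otimes P^{(n-1)}_{\q,\s}\otimes \id)R^{(n)}_{\q,\s}$ and the eigenvalue identity $R^{(n)}_{\q,\s}e_n=\lambda(n)e_n$ to get both the norm recursion $\|e_n\|_{\q,\s}^2=\lambda(1)\cdots\lambda(n)$ and the three-term action $\G(x)e_n=e_{n+1}+\lambda(n)e_{n-1}$, exactly as the paper does (with $\gamma_{n-1}=\lambda(n)$), and then you match this against \eqref{recursion2} via \eqref{eq54}. However, there is a genuine gap in your final step. You assert that $Q_n\mapsto e_n$ extends to a \emph{unitary isomorphism onto} $L^2(\R,\qMP_{\q,\s})$ intertwining multiplication by $t$ with ``the self-adjoint operator $\G(x)$''. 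Neither claim is justified: (a) surjectivity requires the polynomials to be dense in $L^2(\R,\qMP_{\q,\s})$, which can fail when the underlying moment problem is indeterminate; and (b) $\G(x)$ is a priori only a \emph{symmetric} operator on the algebraic Fock space, and essential self-adjointness of the associated Jacobi operator with weights $\lambda(n)$ is again not automatic — it is essentially equivalent to determinacy of the moment problem. What your argument honestly establishes is the equality of all moments, $\state(\G(x)^n)=m_n(\qMP_{\q,\s})$.

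To pass from equal moments to equal measures (and indeed for the phrase ``the probability distribution of $\G(x)$'' and the measure $\qMP_{\q,\s}$ associated with $(Q_n)$ to be well defined at all), one needs the moment problem for this sequence to be determinate. This is precisely the step the paper makes explicit: after computing the moments it invokes determinacy, which is trivial when $\q<0$ (finite support — the case you do treat), and which for $\q\geq 0$ is supplied by Carleman's criterion, $\sum_{n\geq 1}1/\lambda(n)=\sum_{n\geq 1}1/(1+2(n-1)\q+\s)=\infty$, discussed in the paper immediately after the proof (see \cref{rem:Ortho} and the following subsection). Your proof becomes complete if you replace the unsupported unitary-equivalence claim by this moment-determinacy argument, or equivalently prove essential self-adjointness of the Jacobi operator before identifying spectral measures.
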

\begin{proof}Let $\gamma_{n-1}= 1+2\q(n-1)+\s$, $n=1,2,\dots,$  then 
\begin{align*}
\|x^{\otimes n}\otimes x^{\otimes n}\|_{\q,\s}^2
&= \langle x^{\otimes n}\otimes x^{\otimes n},P_{\q,\s}^{(n)}x^{\otimes n}\otimes x^{\otimes n}\rangle_{0,0}  \\&
 =\langle x^{\otimes n}\otimes x^{\otimes n},( I\otimes P^{(n-1)}_{\q,\s}\otimes I)R^{(n)}_{\q,\s}x^{\otimes n}\otimes x^{\otimes n}\rangle_{0,0}.
 \intertext{Using the identity $R^{(n)}_{\q,\s}x^{\otimes n}\otimes
  x^{\otimes n}=(1+2\q(n-1)+\s)x^{\otimes n}\otimes x^{\otimes n}$ we
  have that }
 &\|x^{\otimes n}\otimes x^{\otimes n}\|_{\q,\s}^2=
 \gamma_{n-1} \|x^{\otimes (n-1)}\otimes x^{\otimes (n-1)}\|_{\q,\s}^2 =\gamma_0\gamma_1\cdots \gamma_{n-1},
\end{align*}
and hence by\eqref{eq54} we obtain that 
\begin{equation}
\|x^{\otimes n}\otimes x^{\otimes n}\|_{\q,\s}= \|Q_n\|_{L^2},\qquad n\in\N\cup\{0\}. 
\end{equation}
Therefore, the map $\Phi\colon (\text{span}\{x^{\otimes n}\otimes x^{\otimes n}\mid n \geq 0\}, \|\cdot\|_{\q,\s}) \to L^2(\R,\qMP_{\q,\s})$ defined by $\Phi(x^{\otimes n}\otimes x^{\otimes n})= Q_n(t)$ is an isometry. 
Note that  
\begin{align*}
\G(x)\text{ }x^{\otimes n}\otimes x^{\otimes n} 
&= \B^\ast (x\otimes x) x^{\otimes n}\otimes x^{\otimes n}  +\B (x\otimes x) x^{\otimes n}\otimes x^{\otimes n} 
\\
&= x^{\otimes (n+1)}\otimes x^{\otimes (n+1)}+\gamma_{n-1}x^{\otimes (n-1)}\otimes x^{\otimes (n-1)}.
\end{align*}
Hence, by induction we can compute $\G^n(x)\Omega\otimes   \Omega$ and show that $\Phi(\G^n(x)\Omega \otimes \Omega) = x^n$. Since $\Phi$ is an isometry we get 
$\langle \Omega\otimes  \Omega, \G^n(x) \Omega\otimes  \Omega\rangle_{\q,\s} = m_n(\qMP_{\q,\s})$ for  $n\in \N$. 
Since the moment problem is determined (see \cref{rem:Ortho} 1)), the probability measure
$\qMP_{\q,\s}$  giving the moment sequence $m_n(\qMP_{\q,\s})$ is
uniquely determined and hence $\qMP_{\q,\s}=\m$.  
\end{proof}
\begin{remark}
  \label{rem:Ortho}
In the above proof, the condition $\s<0$ implies that $n\leq M$; see  \cref{subsec:Exclusion}.  In this situation we  
consider the finitely many vectors i.e. $x^{\otimes n}\otimes x^{\otimes n} $ for $n\leq M$. 
\end{remark}

\subsubsection{Moments and combinatorial statistics}

The moment problem for the probability measure $\qMP_{\q,\s}$ is determined. 
Indeed, when $\q\geq 0$, then the best known criterion (in this case) of Hamburger moment problem is due to Carleman \cite{Carleman,Chihara1989}.  Carleman's theorem states that the moment problem is determined if 
$$\infty=\sum_{n\geq 1} \frac{1}{\lambda(n)}=\sum_{n\geq 1}\frac{1}{1 + 2(n-1) \q +\s}, \qquad \q\geq 0.
$$

The odd moments $m_n(\qMP_{\q,\s})$ are equal to zero and even
moments can be computed for instance using Viennot's combinatorial
theory of the orthogonal polynomials \cite{Viennot1985} (see also
\cite{AccardiBozejko1998} and \cite[Section 1.6]{HoraObata2007}).

A Dyck path $D$ of length $n$ is a finite sequence of steps
$w_1,\ldots,w_{2n}$, where $w_i\in \{-1,1\}$, for $1\leq i \leq 2n$
and
\begin{itemize}
\item $\sum_{i=1}^\ell w_i \geq 0$ for all $1\leq \ell < 2n$,
\item $\sum_{i=1}^{2n} w_i = 0$.
\end{itemize}
We denote the set of Dyck paths of length $n$ by $\Dyck_n$ and its
cardinality is given by the $n$-th Catalan number $C_n :=
\frac{1}{n+1}\binom{2n}{n}$.
The combinatorial theory of the orthogonal polynomials asserts that the $2n$-th moment $m_n(\qMP_{\q,\s})$ is given by the following
weighted generating function of Dyck paths:
\[ m_{2n}(\qMP_{\q,\s}) = \sum_{(w_1,\dots,w_{2n}) \in \Dyck_n}
  \prod_{\ell \colon
    w_\ell =
    -1}\big(1+\s+\q\cdot2\sum_{i=1}^{\ell}w_i\big). \]
This in conjunction with \cref{thm2} gives a curious identity between
two bivariate generating functions of different combinatorial objects:
\begin{equation}
  \label{eq:Moments}
\sum_{\pi\in \P_{2}^{sym}(2n)}\s^{\SLNB(\pi)}\q^{\Lcycle(\pi)} = \sum_{(w_1,\dots,w_{2n}) \in \Dyck_n}
  \prod_{\ell \colon
    w_\ell =
    -1}\big(1+\s+\q\cdot2\sum_{i=1}^{\ell}w_i\big).
\end{equation}
Indeed, note that both sides of the above identity are polynomials in
$\q,\s$ which agree on a certain infinite set
and by the same argument as used in the proof of
\cref{theo:PositiveDefinite} the equality between polynomials also
holds true. Note that only the LHS is a bivariate generating function
sensu stricto since the weights associated to Dyck paths appearing in
the RHS are not monomials in $\q,\s$. Nevertheless it is convenient to
rewrite the LHS in terms of another ``bivariate generating function'',
which is not a monomial in $\q,\s$ but it is a monomial in $x,y$ after the
following change of variables $x = 1+\s, y=2\q$.

\begin{proposition}
  Let $x = 1+\s, y=2\q$. Then
  \begin{equation}
    \label{eq:Moments2}
    m_{2n}(\qMP_{\q,\s}) = \sum_{\pi\in \P_{2}^{sym}(2n)}\s^{\SLNB(\pi)}\q^{\Lcycle(\pi)} =\sum_{\pi\in \P_{2}(2n)}x^{c(\pi)}y^{n-c(\pi)},
  \end{equation}
  where $\P_{2}(2n)$ is the set of matchings (pair-partitions) on the
  set $[2n]$ and $c(\pi)$ denotes the number of cycles created by a
  concatenation of $\pi$ with the unique non-crossing matching
  $\hat{\pi} \in P_{2}(2n)$ whose left legs coincides with the leg
  legs of $\pi$.
\end{proposition}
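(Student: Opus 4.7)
The first equality in the display is a direct consequence of \cref{thm2} together with \cref{rozkladgaussa}; only the second equality requires argument. My plan is to construct a weight-preserving $2^n$-to-$1$ map $\P_{2}^{sym}(2n)\to\P_{2}(2n)$ by ``folding'' and then sum the $2^n$ twist-choices for each $\pi'\in\P_2(2n)$ into the factor $(1+\s)^{c(\pi')}(2\q)^{n-c(\pi')}$.

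\emph{Bijection.} Every pair $\{a,b\}\in\pi$ of a symmetric pair-partition $\pi\in\P_2^{sym}(2n)$ projects via $|\cdot|$ to a pair $\{|a|,|b|\}$ on $[2n]$, and each such pair on $[2n]$ is obtained from exactly two symmetric pairs of $\pi$. This defines $\pi'\in\P_2(2n)$. Moreover, given a pair $\{i,j\}\in\pi'$ with $i<j$, the preimage in $\pi$ is one of two mutually exclusive configurations: either the \emph{untwisted} pair $\{i,j\}\cup\{\bar\imath,\bar\jmath\}$, or the \emph{twisted} pair $\{\bar\imath,j\}\cup\{i,\bar\jmath\}$. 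Recording this choice defines a sign function $\tau\colon\pi'\to\{\pm\}$, and the map $\pi\mapsto(\pi',\tau)$ is a bijection $\P_2^{sym}(2n)\longleftrightarrow \P_2(2n)\times\{\pm\}^n$.

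\emph{The partition $\hat\pi$ under the bijection.} Since no pair of $\hat\pi\in\NC_{1,2}([\pm 2n])\cap\P_{1,2}(2n)$ can straddle the origin, $\hat\pi=\hat\pi_+\sqcup\hat\pi_-$ with $\hat\pi_-=\overline{\hat\pi_+}$. The defining condition (a) in \cref{prop:pihat} says that the right legs of $\hat\pi_+$ agree with the right legs of the positive pairs of $\pi$ lying in $\{1,\dots,2n\}$, which in turn coincides (independently of $\tau$) with the set of right legs of $\pi'$. Since a non-crossing pair-partition on an interval is determined by its set of right legs, this gives the identity $\hat\pi_+=\hat\pi'$.

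\emph{Cycle correspondence.} Consider any cycle $C$ of $\pi'\cup\hat\pi'$ on $[2n]$ and lift its traversal to $[\pm 2n]$: walking alternately along $\pi$ and $\hat\pi$, each step along $\hat\pi$ preserves the sign of the current element, while each step along a pair of $\pi$ preserves or flips the sign according to whether $\tau$ is $+$ or $-$ on the underlying pair of $\pi'$. Let $t(C)$ denote the number of twisted pairs of $\pi'$ inside $C$. If $t(C)$ is even, one full loop around $C$ returns to the starting element, and the $\pi$-lift produces two symmetric loops in $\pi\cup\hat\pi$, which form a single \emph{positive} cycle of $\pi$ in the sense of \cref{deficykli}. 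If $t(C)$ is odd, one full loop around $C$ lands at the conjugate point and two traversals are needed, producing a single \emph{negative} cycle. In either case each cycle of $\pi'\cup\hat\pi'$ lifts to exactly one element of $\Cycle(\pi)$; hence $\cyc(\pi)=c(\pi')$, and the number of negative cycles is $\SLNB(\pi)=\#\{i\colon t(C_i)\text{ odd}\}$.

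\emph{Summation.} Fix $\pi'\in\P_2(2n)$ with cycles $C_1,\dots,C_{c}$ of $\pi'\cup\hat\pi'$ of respective lengths $k_1,\dots,k_c$ (counted by the number of $\pi'$-pairs), so $k_1+\cdots+k_c=n$ and $c=c(\pi')$. Summing the weight $\s^{\SLNB(\pi)}\q^{n-\cyc(\pi)}=\s^{\sum_i\mathbf{1}[t(C_i)\text{ odd}]}\q^{n-c}$ over all $\tau\in\{\pm\}^n$ factorises across cycles; on each $C_i$ there are $2^{k_i-1}$ twist-assignments with even parity and $2^{k_i-1}$ with odd parity, giving the contribution $2^{k_i-1}(1+\s)$. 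Therefore
\begin{equation*}
\sum_{\tau\in\{\pm\}^n}\s^{\SLNB(\pi)}\q^{n-\cyc(\pi)}
=\q^{n-c}\prod_{i=1}^{c}2^{k_i-1}(1+\s)
=(1+\s)^{c(\pi')}(2\q)^{n-c(\pi')},
\end{equation*}
and summing over $\pi'$ yields the claimed identity with $x=1+\s$, $y=2\q$.

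The only subtle point, and the step I expect to require the most care, is the third one: matching $\hat\pi_+$ with $\hat\pi'$ and checking that every cycle of $\pi'\cup\hat\pi'$ really does lift to exactly one (positive or negative) element of $\Cycle(\pi)$ depending on the parity $t(C)$, rather than splitting or merging. Once this geometric picture is verified, the combinatorial factorisation across cycles is essentially automatic.
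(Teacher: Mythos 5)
Your proof is correct and follows essentially the same route as the paper's: the paper also uses the folding projection $P\colon \P_{2}^{sym}(2n)\to\P_{2}(2n)$, identifies each fiber with the $2^n$ per-pair twist choices, argues that the number of cycles is constant on each fiber and equal to $c(\pi')$, and sums the signs to obtain the factor $(1+\s)^{c(\pi')}(2\q)^{n-c(\pi')}$. The only difference is presentational: where you verify $\hat\pi_+=\hat\pi'$ and compute the monodromy parity around each cycle directly, the paper phrases the same facts via the single-pair twist involutions $f_B$ (which fix the cycle count and flip the sign of the cycle containing $B$) together with the transitivity of the group they generate, so your argument can be read as a fleshed-out version of the paper's terser one.
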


\begin{proof}
Define the natural projection $P: \P_{2}^{sym}(2n) \to \P_{2}(2n)$
which is sending a pair $(i,j)$ of $\pi \in \P_{2}^{sym}(2n)$ into a
pair $(|i|,|j|)$ of $P(\pi)$. We claim that for every pair $\pi \in
\P_{2}(2n)$ one has
\[ \sum_{\sigma \in P^{-1}(\pi)}\s^{\SLNB(\sigma)}\q^{\Lcycle(\sigma)}
  = (1+\s)^{c(\pi)}(2\q)^{n-c(\pi)}.\]
Then the proof follows immediately from the claim.

In order to prove
the claim it is enough to notice the following properties of $P$. First of all
for any $\pi \in \P_{2}(2n)$ there are precisely $2^n$ preimages in
$\P_{2}^{sym}(2n)$: for every pair $\{i,j\} \pi$ the corresponding
pairs in the partition from $P^{-1}(\pi)$ are either $\{i,j\}$ and
$\{\overline{i},\overline{j}\}$ or $\{i, \overline{j}\}$ and
$\{\overline{i},j\}$. Exchanging these two pairs by each other in
elements from $P^{-1}(\pi)$ is an involution $f_{\{i,j\}}$ on
$P^{-1}(\pi)$ which has a property that it fixes the number of cycles
and it changes the sign of the
cycle, which contains the pairs associated with $\{i,j\}$. The group
generated by $\{f_{B} \colon B \in \pi\}$ acts transitively on
$P^{-1}(\pi)$. In particular the number of cycles is an invariant of
$P^{-1}(\pi)$ so it is clearly equal to $c(\pi)$, which proves the
formula
\[  \sum_{\sigma \in
    P^{-1}(\pi)}\s^{\SLNB(\sigma)}\q^{\Lcycle(\sigma)} = \sum_{\sigma \in P^{-1}(\pi)}\s^{\SLNB(\sigma)}\q^{n-c(\sigma)}
  = (1+\s)^{c(\pi)}(2\q)^{n-c(\pi)}.\]
  \end{proof}

Notice that substituting $x=c$, $y=1$ in \eqref{eq:Moments2} we obtain moments of the
Askey--Wimp--Kerov distribution $\nu_{c-1}$ with the shifted parameter
$c-1$. These moments were interpreted by Drake \cite{Drake2009} as a
generating function of pair-partitions with respect to two statistics.

\begin{theorem}[\cite{Drake2009}]
The (even) moments $m_{2n}(\nu_{c-1})$ are the generating series
of pair-partitions with respect to the following statistics:
\begin{align*}
  m_{2n}(\nu_{c-1}) &= \sum_{\pi\in
  \P_{2}(2n)}c^{\#\text{non-nested pairs in } \pi},\\
    m_{2n}(\nu_{c-1}) &= \sum_{\pi\in \P_{2}(2n)}c^{\#\text{pairs in }
                        \pi \text{ with no right crossing }}.
  \end{align*}
\end{theorem}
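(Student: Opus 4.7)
The plan is to derive both identities from Viennot's combinatorial theory of the moments of orthogonal polynomials, combined with the classical bijection between matchings and labelled Dyck paths. As a shortcut for the second identity one may also invoke \eqref{eq:Moments2}: specialising $x=c$, $y=1$ (equivalently $\q=\tfrac12$, $\s=c-1$, for which the dilation factor $\sqrt{2\q}$ equals $1$, so $\qMP_{\q,\s}=\nu_{c-1}$) immediately gives $m_{2n}(\nu_{c-1})=\sum_{\pi\in\P_2(2n)}c^{c(\pi)}$, reducing the second statement to the combinatorial identity
\[ c(\pi)=\#\{(i,j)\in\pi\colon\text{no }(k,l)\in\pi\text{ satisfies }i<k<j<l\}. \]

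For a unified treatment I would first invoke Viennot's theorem applied to the recurrence $tH_n=H_{n+1}+(n+c-1)H_{n-1}$ to write
\[ m_{2n}(\nu_{c-1})=\sum_{D\in\Dyck_n}\;\prod_{i\colon w_i=-1}\bigl(h_i+c-1\bigr), \]
where $h_i$ denotes the height of $D$ just before the $i$-th down-step. Next I would use the standard bijection $\P_2(2n)\leftrightarrow\{(D,\ell)\}$ in which $\ell$ assigns to each down-step at height $h_i$ a label in $\{1,\dots,h_i\}$ specifying which of the $h_i$ currently open pairs (ordered by increasing opener position) is closed at that step; label $1$ closes the oldest open pair and label $h_i$ closes the most recently opened one. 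This gives a weighted refinement of the identity $(2n-1)!!=\sum_D\prod_i h_i$.

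Expanding $(h_i+c-1)=c+(h_i-1)$ and distributing over all down-steps, each summand corresponds to a choice, at every down-step, between a distinguished ``$c$-label'' and one of the remaining $h_i-1$ labels. Two natural choices of the special label yield the two formulas. Choosing the special label to be $1$: a pair $(i,j)\in\pi$ contributes a factor $c$ exactly when no $(i',r')\in\pi$ with $i'<i$ is still open at time $j$, equivalently no $(i',r')$ satisfies $i'<i<j<r'$, which is precisely the condition that $(i,j)$ is not nested inside any other pair, giving the first formula. Choosing the special label to be $h_i$: a pair $(i,j)$ contributes a factor $c$ exactly when no $(i',r')$ satisfies $i<i'<j<r'$, which is precisely the no-right-crossing condition, giving the second formula.

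The principal technical hurdle is the careful verification that, under the matching/labelled-Dyck-path bijection, the two extremal labels translate into the non-nesting and no-right-crossing conditions as stated. This reduces to a direct bookkeeping with the stack-based reading of a matching, but the conventions (smallest vs.\ largest opener on top of the stack, and the precise moment at which a label is read off) must be tracked carefully. Once this correspondence is confirmed, summing over matchings in $\P_2(2n)$ yields both identities in one stroke.
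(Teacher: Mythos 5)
The paper contains no proof of this theorem: it is imported from Drake \cite{Drake2009} and used as a black box, so your proposal can only be judged on its own merits rather than against an in-paper argument. Your main, unified argument is correct. Viennot's theory applied to the recurrence $tH_n=H_{n+1}+(n+c-1)H_{n-1}$ does give
\[ m_{2n}(\nu_{c-1})=\sum_{D\in\Dyck_n}\;\prod_{i\colon w_i=-1}\bigl(h_i+c-1\bigr), \]
the labelled-Dyck-path encoding of matchings (a closer at height $h_i$ selects one of the $h_i$ currently open pairs) is the standard bijection, and expanding $h_i+c-1=c+(h_i-1)$ while declaring the \emph{oldest} (resp.\ \emph{newest}) open pair to be the $c$-weighted choice translates exactly into the non-nested (resp.\ no-right-crossing) condition: a pair $(i,j)$ is oldest among the pairs open at time $j$ precisely when no $(i',j')$ satisfies $i'<i<j<j'$, and newest precisely when no $(i',j')$ satisfies $i<i'<j<j'$. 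Both identities follow in one stroke, as you say.

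However, the ``shortcut'' you offer for the second identity is wrong and should be deleted. You claim that specialising \eqref{eq:Moments2} at $x=c$, $y=1$ reduces Drake's second formula to the pointwise identity $c(\pi)=\#\{\text{pairs of }\pi\text{ with no right crossing}\}$. That identity is false. Take $\pi=\{(1,4),(2,5),(3,6)\}\in\P_{2}(6)$: its left legs are $1,2,3$, so $\hat\pi=\{(3,4),(2,5),(1,6)\}$, and the concatenation decomposes into the two cycles $\{1,4,3,6\}$ and $\{2,5\}$, whence $c(\pi)=2$; yet only the pair $(3,6)$ has no right crossing, so your right-hand side equals $1$. The statistics $c(\cdot)$ and ``number of pairs with no right crossing'' are merely \emph{equidistributed} over $\P_{2}(2n)$ --- both generating functions equal $m_{2n}(\nu_{c-1})$ --- not equal matching by matching; this is precisely why the paper advertises the cycle statistic as a \emph{third, different} statistic rather than identifying it with either of Drake's. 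Deducing the second formula from \eqref{eq:Moments2} would therefore require a genuine equidistribution (e.g.\ bijective) argument, not the false pointwise claim; as written, only your Viennot argument proves the theorem.
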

We recall that a pair $\{i,j\} \in \pi$ is \emph{nested} if there
exists a pair $\{i',j'\} \in \pi$ such that $i'<i<j<j'$, and a pair
$\{i,j\} \in \pi$ has a \emph{right crossing} if there
exists a pair $\{i',j'\} \in \pi$ such that $i<i'<j<j'$. As an
immediate corollary from \eqref{eq:Moments2} we have an interpretation
of the moments $m_{2n}(\nu_{c-1})$ as the generating series
of pair-partitions with respect to a third, different statistic:

\begin{equation*}
m_{2n}(\nu_{c-1}) = \sum_{\pi\in
  \P_{2}(2n)}c^{\#\text{cycles in } \pi}.  
  \end{equation*}

\begin{remark}  
We would like to mention potential interpretations of the moments as
bivariate generating functions of maps via unknown statistics. A (orientable) map is an embedding of a graph into a
(orientable) compact, connected surface without a boundary such that the complement of the image of the graph
is a disjoint union of simply connected pieces. A map is rooted if it
is distinguished with an oriented corner (i.e.~a small neighbourhood
around a vertex, called the \emph{root}, delimited by two consecutive
half-edges). Drake, based on the result of  showed that the moment $m_{2n}(\nu_{c})$
is the generating function of rooted orientable maps with $n$ edges, where
the exponent of $c$ gives the number of vertices different from the
root. This corresponds to the substitution $y=1,c=\q$ in
\eqref{eq:Moments2} and suggests that the bivariate generating
function
\[ \sum_{\pi\in \P_{2}(2n)}(1+x)^{c(\pi)}y^{n-c(\pi)}\]
can be interpreted as the generating function of rooted orientable maps with $n$ edges, where
the exponent of $x$ gives the number of vertices different from the
root and the exponent of $y$ is a statistic on maps
yet to be found. Another interesting specialization is given by $\q=0$, which gives the
following univariate generating function of the symmetric-pair partitions
with all cycles of size $1$:
$$\sum_{\substack{\pi\in \P_{2}^{sym}(2n):\\ \sigma \in \pi \iff
    |\sigma|=1} }\s^{\SLNB(\pi)} = \sum_{\pi\in
  \P^{\text{non-crossing}}_{2}(2n)}(1+\s)^n = C_n(1+\s)^n.$$
This is the weighted generating function for objects enumerated by
$2^nC_n$, which is given by \href{https://oeis.org/A151374}{A151374}
(see also \href{https://oeis.org/A052701}{A052701}). Among others, it
counts pointed, rooted, bipartite planar
maps with $n$ edges. This suggest that there might be a natural
injection of pointed, rooted, bipartite planar
maps with $n$ edges into the set of rooted, orientable
maps with $n$ edges. Finally, the set $\P_{2}^{sym}(2n)$ of the
symmetric pair-partitions is in a natural bijection with the set of
rooted (orientable and non-orientable) maps with only one-face (i.e.~the complement of the image of the graph embedded into a
surface is simply connected) and it is natural to ask for two statistics
which expresses these moments as a bivariate generating function of
rooted, one-face maps. Note that the set of rooted, one-face maps plays
a crucial role in studying the structure of maps via bijective methods
(see
\cite{Schaeffer1997,BouttierDiFrancescoGuitter2004,ChapuyMarcusSchaeffer2009,ChapuyFerayFusy2013,ChapuyDolega2017,Lepoutre2019,DolegaLepoutre2020}
among others)
and we believe that the statistics in question might be found
through the aforementioned bijections. We leave these questions open,
as they are out of scope of this paper.


\end{remark}

\appendix
\section{How to derive the picture of type D from type B}

In this section we explain that the analogous problem in the case
of reflection groups of type D turned out to be similar to
the case of type A and it follows from our main result (in particular
the possible applications coincides with the special choice of
parameters $M,N$ in \cref{theo:PositiveDefinite'}). This will exhaust the
classification problem for the inductive limits of all three infinite
series of Coxeter groups of Weyl type (A,B/C and D). 

We recall that
the Coxeter group $D_n$ of rank $n$ and type $D$ is a normal subgroup of
$B_n$ of index two:
\[ D_n := \{(g_1,\dots,g_n;\sigma) \in B_n \colon g_1\cdots g_n =
  \id\}.\]
In other terms, it is isomorphic to the kernel of the one-dimensional
representation
\[ B_n \ni g \mapsto (-1)^{\text{number of negative cycles in } g}.\]

Therefore the conjugacy classes of $D_n$ are parametrized by pairs of
partitions $(\rho^+,\rho^-)$ of total size $n$, where the second partition has an even
number of parts (because the number of negative cycles is even). They coincide with the conjugacy classes
$\mathcal{C}_{\rho^+,\rho^-}$, except of the class given by
$\mathcal{C}_{\rho,\emptyset}$ for $\rho$ of the form
\[ \rho = (\rho_1,\dots,\rho_\ell) = 2\cdot \mu =
  (2\mu_1,\dots,2\mu_\ell); \qquad \mu \vdash n/2.\]
This conjugacy class of $B_n$ splits into two conjugacy classes of
$D_n$ that we denote by $\mathcal{C}^+_{\rho,\emptyset}$ and
$\mathcal{C}^-_{\rho,\emptyset}$ (note that these classes exist only
if $2|n$).
The set of reflections in $D_n$ coincides with the set $\mathcal{R}_+$
of long reflections in $B_n$. In particular all the reflections are
conjugated to each other and the reflection function is given by
restricting the signed reflection function $\psi_{\q,\s}$ to $D_n$ and
substituting $\q=\s=q$.

Representation theory of $D_n$ can be derived from the representation
theory of the hyperoctahedral group $B_n$ through the Clifford
theory (see for instance \cite{Kerber1975}). The irreducible representations of $D_n$ are indexed by non-ordered pairs of
partitions $(\lambda^+,\lambda^-)$ of total sum $n$ such that
$\lambda^+\neq \lambda^-$ (in other terms the irreducible
representations $\rho_{\lambda^+,\lambda^-}$ and
$\rho_{\lambda^-,\lambda^+}$ are the same) and there are two
additional irreducible representations denoted $\rho_{\lambda,\lambda;+}$ and
$\rho_{\lambda,\lambda;-}$. The relation between these representations
and irreducible representations of $B_n$ can be described by the
restriction. Let $\rho^{B_n}_{\lambda^+,\lambda^-}$ be the irreducible
representation of $B_n$ indexed by a pair of partitions
$(\lambda^+,\lambda^-)$ and let $\rho^{D_n}_{\lambda^+,\lambda^-}$
denote the restriction of $\rho^{B_n}_{\lambda^+,\lambda^-}$ to
$D_n$. Then
\[ \rho^{D_n}_{\lambda^+,\lambda^-} = \begin{cases}
    \rho_{\lambda^+,\lambda^-} &\text{ if } \lambda^+\neq
    \lambda^-,\\ \rho_{\lambda,\lambda;+} + \rho_{\lambda,\lambda;-} &\text{ if } \lambda^+=
    \lambda^-=\lambda.\end{cases}\]
Therefore, if we restrict \eqref{eq:rozklad} to $D_n$ we obtain that
the reflection function of $D_n$ is given by
\begin{align*}
\sum_{\substack{(\lambda^+,\lambda^-)\\|\lambda^+|+|\lambda^-|=n}}\big(\delta_{\lambda^+\neq\lambda^-}\chi_{\lambda^+,\lambda^-}+\delta_{\lambda^+\neq\lambda^-}(\chi_{\lambda^+,\lambda^-;+}+\chi_{\lambda^+,\lambda^-;-})\big)\\
  \cdot\frac{1}{2}\Bigg(\prod_{\square
  \in \lambda^+}\left(q c(\square)+\frac{1+q}{2}\right) \prod_{\square
                                                                                       \in
                                                                                       \lambda^-}\left(q
  c(\square)+\frac{1-q}{2}\right)\\
  +\prod_{\square
  \in \lambda^-}\left(q c(\square)+\frac{1+q}{2}\right) \prod_{\square
  \in \lambda^+}\left(q
  c(\square)+\frac{1-q}{2}\right)\Bigg).
\end{align*}
In particular it is positive definite on $D(\infty)$ if and only if
\begin{align*}
  &\prod_{\square
  \in \lambda}\left(q c(\square)+\frac{1+q}{2}\right) \prod_{\square
                                                                                       \in
                                                                                       \mu}\left(q
  c(\square)+\frac{1-q}{2}\right)\\
  +&\prod_{\square
  \in \mu}\left(q c(\square)+\frac{1+q}{2}\right) \prod_{\square
  \in \lambda}\left(q
  c(\square)+\frac{1-q}{2}\right) \geq 0
  \end{align*}
for all partitions $\lambda,\mu$. It is easy to check that this
condition holds true if and only if
\[ q \in \{ \frac{1}{2N+1} \colon N \in \Z \}\cup\{0\}.\]
Note that due to \cref{theo:PositiveDefinite'} this set of parameters corresponds
precisely to the case when $\phi_{\q,\s}$ is positive
definite on $B(\infty)$ and $\q=\s$ (which can happen for
$\epsilon = \pm 1, N\in
\Z_{\geq 0}$ and $M=N+1$). Therefore we obtained the following theorem.

\begin{theorem}
  \label{theo:PositiveDefiniteD}
 Let $q \in \C$. The following conditions are equivalent:
  \begin{enumerate}[label=(\roman*), ref=\roman*]
  \item The reflection function $D(\infty) \ni \sigma \to q^{\ell_{\mathcal{R}}(\sigma)}$ is
    positive definite on $D(\infty)$;
 \item The reflection function $D(\infty) \ni \sigma \to q^{\ell_{\mathcal{R}}(\sigma)}$ is
    a character of $D(\infty)$;
    \item The reflection function $D(\infty) \ni \sigma \to q^{\ell_{\mathcal{R}}(\sigma)}$ is
      an extreme character of $D(\infty)$;
    \item The reflection function $D(\infty) \ni \sigma \to q^{\ell_{\mathcal{R}}(\sigma)}$ is
      the restriction of the positive definite reflection function
      $B(\infty) \ni \sigma \to q^{\ell_{\mathcal{R}}(\sigma)}$ on $B(\infty)$;
 \item $q \in \{ \frac{1}{2N+1} \colon N \in \Z \}\cup\{0\}$.
  \end{enumerate}
\end{theorem}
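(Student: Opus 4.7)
The plan is to reduce the positivity question for $D(\infty)$ to \cref{theo:PositiveDefinite'} via Clifford theory, which relates the irreducible characters of $D(n)$ to those of $B(n)$. First I would dispose of the easy implications: $(i)\Leftrightarrow (ii)$ follows immediately from \cref{def:character}, using centrality and normalization of the reflection function; $(iv)\Rightarrow (i)$ is trivial since restrictions of positive definite functions remain positive definite. The bulk of the argument is therefore $(i)\Leftrightarrow (v)$, with $(v)\Rightarrow (iv)$ obtained by direct comparison with \cref{theo:PositiveDefinite'} and $(iii)$ requiring a separate argument.

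To establish $(i)\Leftrightarrow (v)$, I would start from decomposition \eqref{eq:rozklad} of $\phi_{q,q}$ into irreducible $B(n)$-characters $\chi_{\lambda^+,\lambda^-}$ with coefficients $\prod_{\square\in\lambda^+}\!\left(q\, c(\square)+\frac{1+q}{2}\right)\prod_{\square\in\lambda^-}\!\left(q\, c(\square)+\frac{1-q}{2}\right)$. Using the Clifford-theoretic description of $\widehat{D(n)}$ already set up in the excerpt, restriction to $D(n)$ identifies $\chi^{B_n}_{\lambda^+,\lambda^-}$ with $\chi^{B_n}_{\lambda^-,\lambda^+}$ for $\lambda^+\neq\lambda^-$, while the diagonal case $\lambda^+=\lambda^-=\lambda$ splits as $\chi_{\lambda,\lambda;+}+\chi_{\lambda,\lambda;-}$. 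Symmetrizing the coefficients by averaging over the involution $(\lambda^+,\lambda^-)\leftrightarrow(\lambda^-,\lambda^+)$ then gives an explicit expansion of the restricted reflection function into extreme characters of $D(n)$. Applying the analog of \cref{lem:PDcharacter} at each finite level, positive definiteness on $D(\infty)$ amounts to non-negativity of every such symmetrized coefficient, i.e.\ the inequality displayed just before the theorem statement.

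The technical heart of the argument is to show that this symmetrized inequality forces $q\in\{\frac{1}{2N+1}:N\in\Z\}\cup\{0\}$, and this is the step I expect to be the main obstacle. My approach would be to test against the diagonal case $\lambda=\mu$, where the two summands coincide and the condition collapses to a single product being non-negative; this case by itself already recovers the condition from \cref{theo:PositiveDefinite'} restricted to the diagonal $\q=\s$, namely that each factor $q\, c(\square)+\frac{1\pm q}{2}$ respects a discrete sign pattern. Combined with the parametrization $\q=\epsilon/(M+N)$, $\s=(M-N)/(M+N)$ from \cref{theo:PositiveDefinite'} \eqref{iii}, imposing $\q=\s$ yields $M=N+1$ and therefore $q=\epsilon/(2N+1)$. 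The off-diagonal case $\lambda\neq\mu$ is automatically satisfied because each individual summand is already non-negative for these values of $q$ (being a product of content factors for the main theorem), so no further restriction arises.

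Once $(i)\Leftrightarrow (v)$ is in place, the equivalence $(v)\Leftrightarrow (iv)$ is then a direct comparison: the parameter set in (v) is exactly the intersection of the admissible $(\q,\s)$-locus of \cref{theo:PositiveDefinite'} with the line $\q=\s$. For $(i)\Rightarrow (iii)$, I would either invoke the type-$D$ analog of the Hirai--Hirai classification of extreme characters of $B(\infty)$, or more concretely observe that the explicit Schur--Weyl type representation $\omega_n$ constructed in the proof of \cref{theo:PositiveDefinite} (with $M=N+1$) restricts to $D(n)$ as a representation whose normalized character is still $q^{\ell_{\mathcal{R}}}$, and that the resulting tower of representations yields a factor representation in the inductive limit, so the associated character is extreme.
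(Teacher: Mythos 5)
Your proposal follows essentially the same route as the paper: restrict the expansion \eqref{eq:rozklad} to $D(n)$ via Clifford theory, symmetrize the coefficients over the involution $(\lambda^+,\lambda^-)\leftrightarrow(\lambda^-,\lambda^+)$, and identify non-negativity of the resulting coefficients with the $\q=\s$ case of \cref{theo:PositiveDefinite'}, which forces $M=N+1$ and hence $q=\epsilon/(2N+1)$. The only differences are elaborations within that approach: you unpack the paper's ``it is easy to check'' step into a diagonal-necessity/off-diagonal-sufficiency argument (which is correct, the diagonal case being settled by testing column partitions), and you sketch a treatment of the extremality condition (iii) that the paper leaves implicit.
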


\subsection{Concluding remarks}

We conclude by making a remark that the problem of classifying
positive definite reflection functions (or their multivariate
refinements) seems to be approachable for a wide class of groups
$\Sym{\infty}(T)$ (studied by Hirai and Hirai \cite{HiraiHirai2005}) by
using a method presented in this paper. These groups are of the following form: for a finite
group $T$ we define $\Sym{n}(T)$ as the wreath product $T \wr \Sym{n}$
and we set $\Sym{\infty}(T)$ as the inductive limit of the ascending
tower of groups $\Sym{1}(T)<\Sym{2}(T)<\cdots$. This class
contains for instance the infinite analogue of the Sheppard--Todd groups
$G(r,1,\infty)$, which corresponds to the choice $T=\Z_r$. It turns out that the representation theory of the
group $\Sym{n}(T)$ can be explicitly described in terms of the representation theory
of $\Sym{n}$ and $T$. This makes it possible to use the same methods as
we used in this paper to tackle the case of the hyperoctahedral group (which is given by the choice $T=\Z_2$). We did not study this more general context
because of the applications that we presented in the second part of the
paper and that are specific to type $B$. Therefore we leave this comment as an invitation for further investigations.

\section*{Acknowledgement}

MD would like to thank to Joel Brewster Lewis and Jacinta Torres for their helpful comments.

\bibliographystyle{amsalpha}

\bibliography{biblio2015}

\end{document}